 \numberwithin{equation}{section}
\theoremstyle{plain}
\newtheorem{thm}{Theorem}[section]
\newtheorem{cor}[thm]{Corollary}
\newtheorem{lem}[thm]{Lemma}
\theoremstyle{definition}
\newtheorem{defn}[thm]{Definition}
\theoremstyle{remark}
\newtheorem{rem}[thm]{Remark}
\newcommand{\bp}{\begin{proof}[\ensuremath{\mathbf{Proof}}]}
\newcommand{\bs}{\begin{proof}[\ensuremath{\mathbf{Solution}}]}
\newcommand{\ep}{\end{proof}}
\begin{document}

\title{ Generating and Adding Flows on Locally Complete Metric Spaces\thanks{Mathematics Subject Classification. 34G99.}}

\author{Hwa Kil Kim and Nader Masmoudi}
\maketitle

\begin{abstract}
As a generalization of a vector field on a manifold, the notion of an arc field on a locally complete metric space was introduced in \cite{BC}.
In that  paper, the  authors proved an analogue of the  Cauchy-Lipschitz Theorem i.e they showed the existence and uniqueness of solution curves for  a time
independent arc field. In this paper, we extend the result to the
 time dependent case, namely
 we show the existence and uniqueness of solution
curves for  a time dependent arc field. We also introduce the
notion of the  sum of two time dependent arc fields
and show  existence and uniqueness of solution curves for this sum.
\end{abstract}


\section{Introduction}

Vector fields play an important role on manifolds.
In particular  they allow the  study of dynamics
  on  the manifold.  On metric spaces and
in the absence of a differential
structure, the notion of  arc fields was introduced in
\cite{BC}.  Under some regularity assumption, the authors of \cite{BC} proved
the existence of  solution curves for  a time
independent arc field. Their result can be seen as an
extension of the  Cauchy-Lipschitz Theorem. The goal of this paper
is to define a notion of sum of two  arc fields
and construct a  unique  solution curve  for this sum.
We also generalize \cite{BC} to the time dependent case.

Let us also mention that
the generalization of the notion of differential equations from manifolds to metric spaces is a natural question.
 In this direction,  there are many other  approaches which can be found in  \cite{A}, \cite{N},
\cite{P1}, \cite{P2}, \cite{Penot07} and \cite{CG09}. A basic idea that all approaches have in
common is to replace the concept of a vector field by a suitable family of curves (herein called an 'arc field' following \cite{BC})
 each of which supplies the direction of travel at the point from which it issues. We borrow the idea of \cite{BC} which shows the existence
 of flows corresponding time independent arc fields on locally complete metric spaces whereas all others have predominantly assumed that the underlying
 metric space is locally compact.

Let us now explain our  motivation behind this work.
In   \cite{KM}, we  study systems coupling fluids and polymers.
In it most generality the phase space for the polymers is given by a  metric space (see \cite{CZ10}).
When  the phase space of the  polymers is a manifold,
 we get a system coupling the Navier-Stokes equation for the fluid velocity with a Fokker-Planck equation describing the evolution of the
polymer density (see for instance \cite{Constantin05,CM08}).
The coupling comes from an extra stress term in the fluid equation due to the polymers. There is also a drift term in the Fokker-Planck
 equation that depends on the spatial gradient of fluid velocity. It can be seen that the Fokker-Planck equation
 has a flow structure on the set of
 probability densities of polymers. More specifically, let $\mathcal{M}$ be the set of all Borel probability
measures defined on the manifold (phase space of polymers) then we can put
 a metric structure on $\mathcal{M}$ using  the Wasserstein distance.
Once $\mathcal{M}$ is equipped with the Wasserstein distance, the Fokker-Planck equation can be considered as
 the  sum of two flows on $\mathcal{M}$.
One is the gradient flow corresponding to the entropy functional on $\mathcal{M}$ and the other one is a
drift term which is  generated by the spatial gradient of
the fluid velocity which depends on time. If the phase space of polymers is not a manifold but just a metric space then we don't have Fokker-Planck
equation
any more.
But the flow interpretation is still available to describe the evolution of the
 polymer density if we know how to generate and add flows on metric spaces.  Achieving this
is one of the goal of this paper.


We briefly summarize the contents of each section. In section 2, we study time dependent arc fields,
solution curves, and sufficient conditions under which we can prove the existence of solution curves for  arc
fields. We also show the continuous dependence of solutions on initial conditions from which we can get
the uniqueness of the solution curve.
In section  3, we introduce the notion of
 solution curve for the  sum of  two arc fields. By imposing a kind of commutation  law
on two time dependent arc fields, we prove the existence of solution curves. We also get the uniqueness of
 a  solution curve to the  sum of two arc field by
showing the continuous dependence of solution curves on the initial conditions.

\section{Generating Flows}

\subsection{Time dependent arc fields}
Let $X$ be a locally complete metric space with a  metric $d$.
\begin{defn}
A time dependent arc field on $X$ is a family of maps $\Phi( \cdot ; \cdot,\cdot): [0,\infty) \times   X\times[0,1]\rightarrow X$
such that for all $t\in[0,\infty), x\in X, $ we have   $ \Phi(t; x,0)=x,$
$$\rho(x,t):=\sup_{h\neq k}\frac{d(\Phi(t;x,h),\Phi(t;x,k))}{|h-k|} <\infty $$
and the function $\rho(x,t)$ is locally bounded, namely  for all $t,x \in [0,\infty) \times X $,
there exist $r,l > 0$ such that
$$\rho(x,t;r,l):=\sup_{y\in B(x,r),|t-s|\leq l }\{\rho(y,s) \}<\infty.  $$
\end{defn}
One can interpret $\Phi(t;b,\cdot) : [0,1] \rightarrow X$ as a curve on $X$  starting from $b$
 to $\Phi(t;b,1)$. This  gives the direction of the  curve in some
sense. Notice that, for fixed $b\in X$,  the direction  given by $\Phi(t;b,.)$ depends on the time $t$.
 Besides,  $\rho(x,t)$ can be understood as the upper bound on the
speed of the curve $\Phi(t;b,\cdot)$. For the convenience, we will
use the notation $\Phi^t_h(b):=\Phi(t;b,h).$
\begin{defn}
For given $a\in X$ and $t\in [0,\infty),$ a solution curve of $\Phi$ with initial position $a$ at time $t$ is a map
$\sigma:[t, t + c) \rightarrow X$ (for some $c > 0$)
such that $\sigma(t)=a$ and for each $s\in[t,t+c)$
\begin{equation}\label{Definition-solution}
\lim_{h\rightarrow 0^+}\frac{d(\sigma(s+h),\Phi^{s}_h(\sigma(s)))}{h}=0
\end{equation}
\end{defn}

We introduce some conditions on the  time dependent arc field $\Phi$. Motivations for Condition A and B were
 already given in \cite{BC}.
Condition C is about the time regularity of $\Phi$.

{\bf Condition A:} There is a function $\Lambda : X\times X \times [0,1] \rightarrow (-\infty,\infty)$ such that for each $a\in X$ and $t\in [0,\infty)$,
  there are constants $r_a>0,$
$\epsilon_a\in(0,1]$ and $T_t>t$ such that $\Lambda $ is bounded above on $  B(a,r_a)\times
B(a,r_a)\times [0,\epsilon_a]$  and
\begin{equation}\label{Condition-A}
d(\Phi^s_h(a_1),\Phi^s_h(a_2)) \leq d(a_1,a_2)(1+h\Lambda(a_1,a_2,h))
\end{equation}
for all $a_1,a_2\in B(a,r_a),$ $h\in[0,\epsilon_a]$ and $s\in[t,T_t]$.

{\bf Condition B:} There is a function $\Omega:X\times[0,1]\times[0,1]\rightarrow [0,\infty)$ such that for each $a\in X$ and $t\in [0,\infty)$,
there are constants $r_a>0$, $\epsilon_a\in(0,1]$ and  $T_t>t$ for which $\Omega$
  is bounded on $ B(a,r_a)\times[0,\epsilon_a]\times[0,\epsilon_a]$ and
\begin{equation}\label{Condition-B1}
d(\Phi^s_{l+h}(b),\Phi^s_h\circ \Phi_l^s(b)) \leq hg(l,h)\Omega(b,l,h)
\end{equation}
for all $b\in B(a,r_a)$ , $l,h\in[0,\epsilon_a]$ and $s\in[t,T_t]$ where $g:[0,\epsilon_a]\times [0,\epsilon_a]\rightarrow [0,\infty)$ satisfies
\begin{equation}\label{Condition-B2}
\lim_{l,h\rightarrow0^+} g(l,h)=0 \quad {\rm and} \quad \sum_{i\in Z^+, 2^{-i}\leq \epsilon_a}g(2^{-i},2^{-i})<\infty
\end{equation}

{\bf Condition C:} For each $a\in X$ and $t\in [0,\infty)$, there are constants $r_a>0$, $\epsilon_a\in(0,1]$, $T_t>t$, $0<\alpha<1$ and $C>0,$
such that
\begin{equation}\label{Condition-C}
d(\Phi^{s_1}_h(b),\Phi^{s_2}_h(b)) \leq  C h |s_1-s_2|^\alpha
\end{equation}
for all $b\in B(a,r_a)$ , $h\in[0,\epsilon_a]$ and $s_1,s_2\in[t,T_t]$.

\begin{rem}
Once we have fixed $a\in X$, $t\in [0,\infty)$ and fixed constants $r_a,\epsilon_a, T_t$ then functions $\Lambda$ and $\Omega$ are bounded above.
We denote upper bound of $\Lambda$(respectively $\Omega$) by $K_A$($K_B$).
\end{rem}

As a simple observation, by combining Condition B and C, if $b, \Phi^s_l(b)\in B(a,r_a)$ then we have
\begin{align}\label{Condition-D}\nonumber
d(\Phi_{l+h}^s(b), \Phi_h^{s+l}\circ\Phi_l^s(b))&\leq d(\Phi_{l+h}^s(b), \Phi_h^s\circ\Phi_l^s(b)) +
d(\Phi_h^s\circ\Phi_l^s(b), \Phi_h^{s+l}\circ\Phi_l^s(b))\\\nonumber
   &\leq hg(l,h)K_B +Chl^\alpha\\
   & = h(g(l,h)K_B+ Cl^\alpha)=: h\tilde{g}(l,h)
\end{align}

\begin{lem}\label{One-step}
For a  given $a\in X$ and $t\in[0,\infty)$, let $r_a$, $\epsilon_a$ and $T_t$ be  the
constants in Condition A, B and C. If $b_1,b_2\in B(a,r_a)$,
$2h\in[0,\epsilon_a]$, $s,s+h\in[t,T_t]$ and $\Phi^s_h(b_1),\Phi^s_h(b_2) \in  B(a,r_a)$ then we have
\begin{equation}\label{Eq:lemma:One-step}
d(\Phi_h^{s+h}\circ\Phi_h^s(b_1),\Phi_{2h}^s(b_2)) \leq d(b_1,b_2)(1+hK_A)^2+h\tilde{g}(s,h)
\end{equation}
\end{lem}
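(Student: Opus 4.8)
The plan is to estimate the distance between the two-step Euler iterate $\Phi_h^{s+h}\circ\Phi_h^s(b_1)$ and the single double-step $\Phi_{2h}^s(b_2)$ by inserting the natural intermediate comparison point $\Phi_h^{s+h}\circ\Phi_h^s(b_2)$ and applying the triangle inequality:
\begin{equation}\label{Eq:proposal-split}
d(\Phi_h^{s+h}\circ\Phi_h^s(b_1),\Phi_{2h}^s(b_2)) \leq d(\Phi_h^{s+h}\circ\Phi_h^s(b_1),\Phi_h^{s+h}\circ\Phi_h^s(b_2)) + d(\Phi_h^{s+h}\circ\Phi_h^s(b_2),\Phi_{2h}^s(b_2)).
\end{equation}
The first term on the right is a ``contraction/expansion'' term handled by two applications of Condition A; the second term is a ``consistency'' term handled by the combined estimate \eqref{Condition-D} with $l=h$.

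For the first term, I would first apply Condition A (inequality \eqref{Condition-A}) at time $s$ with the two starting points $b_1,b_2\in B(a,r_a)$ and step $h\in[0,\epsilon_a]$ (note $h\le 2h\le\epsilon_a$), obtaining $d(\Phi_h^s(b_1),\Phi_h^s(b_2))\le d(b_1,b_2)(1+h\Lambda(b_1,b_2,h))\le d(b_1,b_2)(1+hK_A)$. Then, since by hypothesis $\Phi_h^s(b_1),\Phi_h^s(b_2)\in B(a,r_a)$ and $s+h\in[t,T_t]$, I apply Condition A a second time at time $s+h$ with starting points $\Phi_h^s(b_1),\Phi_h^s(b_2)$ and step $h$, giving a second factor $(1+hK_A)$. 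Composing the two bounds yields $d(\Phi_h^{s+h}\circ\Phi_h^s(b_1),\Phi_h^{s+h}\circ\Phi_h^s(b_2))\le d(b_1,b_2)(1+hK_A)^2$. For the second term, I apply \eqref{Condition-D} with $b$ replaced by $b_2$ and $l=h$; this requires $b_2\in B(a,r_a)$ and $\Phi_h^s(b_2)\in B(a,r_a)$, both of which are among the hypotheses, and it produces the bound $h\tilde g(h,h)$. Writing $\tilde g(s,h)$ as shorthand (matching the statement's notation, which slightly abuses the arguments of $\tilde g$), adding the two contributions gives exactly \eqref{Eq:lemma:One-step}.

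The only genuinely delicate point is bookkeeping of the domain and time constraints: one must check that every invocation of Conditions A, B, C stays within the prescribed ball $B(a,r_a)$ and time window $[t,T_t]$, and within the step-size range $[0,\epsilon_a]$. The hypothesis $2h\in[0,\epsilon_a]$ is exactly what guarantees $h\in[0,\epsilon_a]$ for both half-steps, and the assumptions $s,s+h\in[t,T_t]$ together with $\Phi_h^s(b_i)\in B(a,r_a)$ are precisely what is needed to chain the second application of Condition A and to apply \eqref{Condition-D}. There is no real analytic obstacle here — the lemma is the ``one-step'' building block whose proof is purely a matter of combining the structural hypotheses in the right order; the work of controlling whether the iterates remain in $B(a,r_a)$ over many steps is deferred to the later iteration argument that will use this lemma.
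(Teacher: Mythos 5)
Your proposal is correct and follows essentially the same route as the paper: the same triangle-inequality split at the intermediate point $\Phi_h^{s+h}\circ\Phi_h^s(b_2)$, two applications of Condition A for the first term, and the combined estimate (\ref{Condition-D}) with $l=h$ for the second term. Your remark that the statement's $\tilde{g}(s,h)$ is really $\tilde{g}(h,h)$ is also accurate; the paper's own proof produces exactly that bound.
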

\begin{proof}

\begin{figure}[h]
\centering\centerline{
\includegraphics[width=15cm, height=10cm]{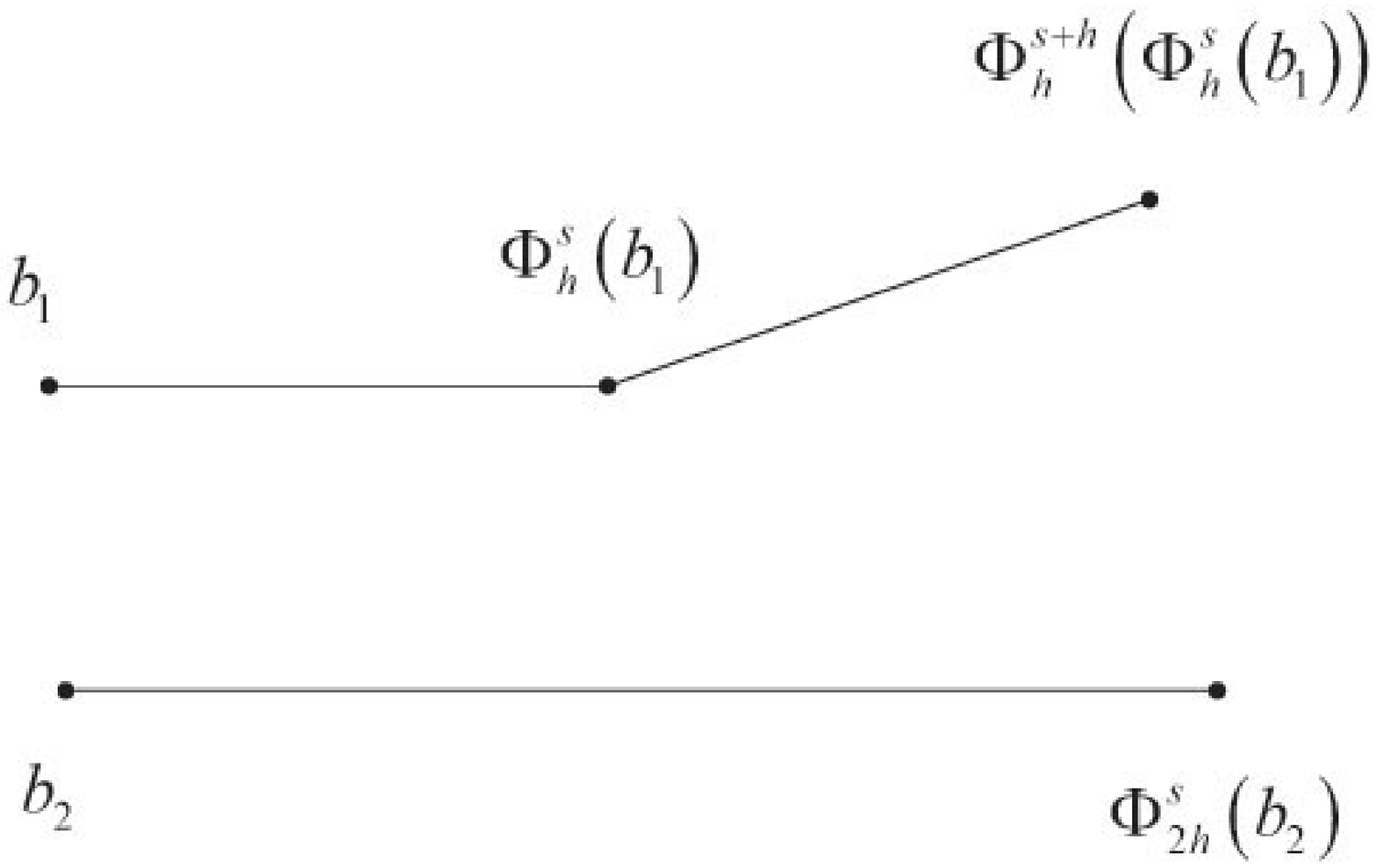}}
    \caption{
    }
\label{fig6_overload}
\end{figure}

Triangle inequality gives
\begin{align}\label{Eq1:proof:lemma:One-step}
d(\Phi_h^{s+h}\circ\Phi_h^s(b_1),\Phi_{2h}^s(b_2)) \leq d(\Phi_h^{s+h}\circ\Phi_h^s(b_1),\Phi_h^{s+h}\circ\Phi_h^s(b_2))
+ d(\Phi_{2h}^{s}(b_2),\Phi_h^{s+h}\circ\Phi_h^s(b_2))
\end{align}
\\
For the first term in the right hand side of (\ref{Eq1:proof:lemma:One-step}), we use Condition A twice
\begin{equation}\label{Eq1}
d(\Phi_h^{s+h}\circ\Phi_h^s(b_1),\Phi_h^{s+h}\circ\Phi_h^s(b_2))\leq
d(\Phi_h^{s}(b_1),\Phi_h^s(b_2))(1+hK_A)\leq d(b_1,b_2)(1+hK_A)^2
\end{equation}
\\
For the second term, we exploit (\ref{Condition-D}) to get
\begin{equation}\label{Eq2}
d(\Phi_{2h}^{s}(b_2),\Phi_h^{s+h}\circ\Phi_h^s(b_2))\leq h\tilde{g}(s,h)
\end{equation}
\\
We combine (\ref{Eq1:proof:lemma:One-step}), (\ref{Eq1}) and (\ref{Eq2}) to finish the proof.
\end{proof}

\begin{rem}\label{Remark:One-step}  In general, we have
\begin{equation}\label{Eq:Remark:One-step}
d(\Phi_{h_2}^{s+h_1}\circ\Phi_{h_1}^s(b_1),\Phi_{h_1+h_2}^s(b_2)) \leq d(b_1,b_2)(1+h_1K_A)(1+h_2K_A)+h_2\tilde{g}(h_1,h_2)
\end{equation}

\end{rem}


\begin{lem}\label{Lemma1}
For a  given $a\in X$ and $t\in[0,\infty)$, let $r_a$, $\epsilon_a$ and $T_t$ be  the
constants in Condition A, B and C. If $b_1,b_2\in B(a,r_a)$,
$h,l+h\in[0,\epsilon_a]$, $s,s+l\in[t,T_t]$ and $\Phi^s_l(b_2) \in  B(a,r_a)$ then we have
$$d(\Phi_h^{s+l}(b_1),\Phi_{l+h}^s(b_2))\leq d(b_1,\Phi_l^s(b_2)) +h\eta_s(b_1,b_2,l,h)$$
where $\eta_s(b_1,b_2,l,h):= d(b_1,\Phi_l^s(b_2))K_A+\tilde{g}(l,h) $. Furthermore, we notice that $\eta_s(b_1,b_1,l,h) $ converges to $0$ as
$d(b_1,b_2), l,h\rightarrow 0.$
\end{lem}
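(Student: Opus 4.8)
The plan is to compare the two curves $\Phi_h^{s+l}(b_1)$ and $\Phi_{l+h}^s(b_2)$ by inserting the intermediate point $\Phi_h^{s+l}\circ\Phi_l^s(b_2)$, and then to bound the first resulting piece by Condition A and the second by the combined estimate (\ref{Condition-D}). Thus the first step is simply the triangle inequality,
\[
d\bigl(\Phi_h^{s+l}(b_1),\Phi_{l+h}^s(b_2)\bigr)\le d\bigl(\Phi_h^{s+l}(b_1),\Phi_h^{s+l}\circ\Phi_l^s(b_2)\bigr)+d\bigl(\Phi_h^{s+l}\circ\Phi_l^s(b_2),\Phi_{l+h}^s(b_2)\bigr).
\]

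Next I would estimate the first summand using Condition A at time $s+l$ with base points $b_1$ and $\Phi_l^s(b_2)$: since by hypothesis $s+l\in[t,T_t]$, $h\in[0,\epsilon_a]$ and $b_1,\Phi_l^s(b_2)\in B(a,r_a)$, inequality (\ref{Condition-A}) together with the upper bound $\Lambda\le K_A$ gives $d(\Phi_h^{s+l}(b_1),\Phi_h^{s+l}\circ\Phi_l^s(b_2))\le d(b_1,\Phi_l^s(b_2))(1+hK_A)$. For the second summand I would apply (\ref{Condition-D}) with base point $b_2$; its hypotheses $b_2\in B(a,r_a)$ and $\Phi_l^s(b_2)\in B(a,r_a)$ are exactly what is assumed (and $l,h\in[0,\epsilon_a]$ follows from $l+h\le\epsilon_a$, while $s,s+l\in[t,T_t]$ is assumed), so $d(\Phi_{l+h}^s(b_2),\Phi_h^{s+l}\circ\Phi_l^s(b_2))\le h\tilde g(l,h)$. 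Adding these two bounds and factoring $h$ out of the error terms yields $d(b_1,\Phi_l^s(b_2))+h\bigl(d(b_1,\Phi_l^s(b_2))K_A+\tilde g(l,h)\bigr)$, which is precisely the claimed inequality with $\eta_s$ as defined.

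Finally, for the supplementary assertion I would note that $\eta_s(b_1,b_1,l,h)=d(b_1,\Phi_l^s(b_1))K_A+\tilde g(l,h)$. Since $\Phi_0^s(b_1)=b_1$ and $d(\Phi_0^s(b_1),\Phi_l^s(b_1))\le\rho(b_1,s)\,l$ by the definition of a time dependent arc field, the first term tends to $0$ as $l\to0$, and $\tilde g(l,h)=g(l,h)K_B+Cl^\alpha\to0$ as $l,h\to0$ by (\ref{Condition-B2}); hence $\eta_s(b_1,b_1,l,h)\to0$ in the stated limit. I do not anticipate any real obstacle: the only thing that requires care is verifying that every point fed to $\Phi$ stays inside $B(a,r_a)$ and that all time and step parameters remain in $[t,T_t]$ and $[0,\epsilon_a]$, so that Condition A and (\ref{Condition-D}) may legitimately be invoked — and those requirements coincide with the hypotheses of the lemma.
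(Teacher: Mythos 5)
Your proposal is correct and follows exactly the paper's own argument: insert the intermediate point $\Phi_h^{s+l}\circ\Phi_l^s(b_2)$ via the triangle inequality, bound the first piece by Condition A and the second by (\ref{Condition-D}), and regroup to identify $\eta_s$. Your justification of the supplementary limit $\eta_s(b_1,b_1,l,h)\to 0$ is slightly more detailed than the paper's (which dismisses it as trivial), but the content is the same.
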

\begin{proof}
We use Condition A and (\ref{Condition-D}) to get
\begin{align*}
d(\Phi_h^{s+l}(b_1),\Phi_{l+h}^s(b_2)) &\leq d(\Phi_h^{s+l}(b_1),\Phi_h^{s+l}\circ\Phi_{l}^s(b_2))+
             d(\Phi_h^{s+l}\circ\Phi_{l}^s(b_2),\Phi_{l+h}^s(b_2))\\
             & \leq d(b_1,\Phi_l^s(b_2))(1+hK_A) + h\tilde{g}(l,h)\\
             & = d(b_1,\Phi_l^s(b_2)) +h[ d(b_1,\Phi_l^s(b_2)K_A+\tilde{g}(l,h) ]
\end{align*}

and trivially $\eta_s(b_1,b_1,l,h)\rightarrow 0$ as $d(b_1,b_2), l,h\rightarrow 0$.
\end{proof}

\begin{lem}\label{New}
For  a  given $a\in X$ and $t\in[0,\infty)$, let $r_a$, $\epsilon_a$ and $T_t$ be the
 constants in Condition A, B and C.
If $b_1,b_2\in B(a,r_a)$ and $h \in[0,\epsilon_a]$, $s,u\in[t,T_t]$  then we have,
$$d(\Phi_h^s(b_1),\Phi_{h}^u(b_2))\leq d(b_1,b_2)(1+hK_A) + Ch|s-u|^\alpha$$
\end{lem}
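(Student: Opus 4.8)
The plan is to combine Condition~A with Condition~C in the most direct way possible, using a single triangle inequality that separates the two "coordinates" in which $\Phi_h^s(b_1)$ and $\Phi_h^u(b_2)$ differ: the basepoint ($b_1$ versus $b_2$) and the time ($s$ versus $u$). Concretely, I would insert the intermediate point $\Phi_h^s(b_2)$ and write
\[
d(\Phi_h^s(b_1),\Phi_h^u(b_2)) \leq d(\Phi_h^s(b_1),\Phi_h^s(b_2)) + d(\Phi_h^s(b_2),\Phi_h^u(b_2)).
\]

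For the first term I would apply Condition~A (inequality~\eqref{Condition-A}) with $a_1=b_1$, $a_2=b_2$ at time $s$, and bound $\Lambda(b_1,b_2,h)\le K_A$ since $b_1,b_2\in B(a,r_a)$; this gives $d(b_1,b_2)(1+hK_A)$. For the second term I would apply Condition~C (inequality~\eqref{Condition-C}) with the basepoint $b_2\in B(a,r_a)$ and the two times $s_1=s$, $s_2=u$ in $[t,T_t]$, which yields $Ch|s-u|^\alpha$. Adding the two bounds gives exactly the claimed inequality $d(\Phi_h^s(b_1),\Phi_h^u(b_2)) \le d(b_1,b_2)(1+hK_A)+Ch|s-u|^\alpha$.

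The only point requiring a word of care is that Conditions~A, B, C as stated each come with their own constants $r_a,\epsilon_a,T_t$; in the hypothesis of the lemma these are taken to be common constants (as in the earlier lemmas), so one should note at the outset that $r_a,\epsilon_a,T_t$ may be chosen to work simultaneously for all three conditions by intersecting the balls and intervals and taking the smaller radius and the larger lower bound for $T_t$ — exactly as is already done implicitly in Lemma~\ref{One-step} and Lemma~\ref{Lemma1}. No smallness of $h$ beyond $h\in[0,\epsilon_a]$ is needed, and no use of Condition~B is required here; this lemma is genuinely just the superposition of the spatial Lipschitz estimate and the Hölder-in-time estimate.

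I do not expect a real obstacle: there is no iteration, no induction, and no need to track error terms of the form $\tilde g$. The proof is two lines once the triangle inequality is set up, so the "hard part" is merely bookkeeping — making sure the intermediate point $\Phi_h^s(b_2)$ indeed has its basepoint $b_2$ in $B(a,r_a)$ (it does, by hypothesis) so that Condition~A is applicable, and that the time $u$ lies in $[t,T_t]$ so that Condition~C applies. I would present it as a short \texttt{proof} environment mirroring the style of the preceding lemmas.
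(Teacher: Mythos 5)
Your proposal is correct and is essentially the paper's own argument: the paper inserts the same intermediate point $\Phi_h^s(b_2)$, bounds the first term by Condition~A with $\Lambda \le K_A$ and the second by Condition~C, exactly as you describe. Nothing further is needed.
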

\begin{proof}
We combine Condition A and C to get
\begin{align*}
d(\Phi_h^s(b_1),\Phi_{h}^u(b_2))& \leq d(\Phi_h^s(b_1),\Phi_{h}^s(b_2)) + d(\Phi_h^s(b_2),\Phi_{h}^u(b_2))\\
& \leq d(b_1,b_2)(1+hK_A) + Ch|s-u|^\alpha
\end{align*}

\end{proof}

\begin{lem}\label{Check-solution1}
For a  given $a\in X$ and $t\in[0,\infty)$, let $r_a$, $\epsilon_a$ and $T_t$ be the
 constants in Condition A, B and C.
Assume $b_1,b_2\in B(a,r_a)$ and  $s,s+h\in[t,T_t]$. Define a polygonal path $p(l):[0,h]\rightarrow X$ starting at $b_1\in X$ as follows; $p(l):=\Phi^s_{l}(b_1)$ for
$0\leq l\leq r_1$ and  $p(l):= \Phi^{s+r_i}_{l-r_i}(p(r_i))$ for
 $r_i\leq l\leq r_{i+1}$ with $0\leq r_1\leq \dots\leq r_i\leq r_{i+1}\leq \dots\leq r_k=h.$ Then we have
\begin{equation}
d(p(l), \Phi_l^s(b_2)) \leq d(b_1,b_2) + h \max_{1\leq i \leq k} \eta_s(p(r_i),b_2,r_i,r_{i+1}-r_i)
\end{equation}
for $0\leq l\leq h$. Furthermore, we have
\begin{equation}\label{Eq1:lemma:Check-solution}
\lim_{h\rightarrow 0}\max_{0\leq r_i \leq h} \eta_s(p(r_i),b_2,r_i,r_{i+1}-r_i) = 0
\end{equation}
\end{lem}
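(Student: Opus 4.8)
The plan is to prove the bound on $d(p(l),\Phi_l^s(b_2))$ by induction on the index $i$ of the subinterval $[r_i,r_{i+1}]$ containing $l$, and then to derive the limit \eqref{Eq1:lemma:Check-solution} from the uniform convergence of $\eta_s$ established in Lemma~\ref{Lemma1}. First I would record the base step: for $0\le l\le r_1$ we have $p(l)=\Phi^s_l(b_1)$, so Lemma~\ref{Lemma1} (with $b_2$ playing its own role, the ``inner'' curve being $\Phi^s_l(b_2)$, and taking the parameter $l$ there equal to $0$ so that $\Phi^s_0(b_2)=b_2$) gives directly
\[
d(p(l),\Phi_l^s(b_2))\le d(b_1,b_2)+l\,\eta_s(b_1,b_2,0,l)\le d(b_1,b_2)+h\,\eta_s(p(r_0),b_2,r_0,r_1-r_0),
\]
with the convention $r_0=0$. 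For the inductive step, suppose the estimate holds at the node $l=r_i$, i.e. $d(p(r_i),\Phi_{r_i}^s(b_2))\le d(b_1,b_2)+h\max_{1\le j\le i}\eta_s(p(r_j),b_2,r_j,r_{j+1}-r_j)$. For $r_i\le l\le r_{i+1}$ we have $p(l)=\Phi^{s+r_i}_{l-r_i}(p(r_i))$, and I would apply Lemma~\ref{Lemma1} once more, now comparing the curve $\Phi^{s+r_i}_{\cdot}(p(r_i))$ against $\Phi^s_{\cdot}(b_2)$: with ``$l$'' there equal to $r_i$ and ``$h$'' there equal to $l-r_i$, this yields
\[
d\bigl(\Phi^{s+r_i}_{l-r_i}(p(r_i)),\Phi^s_{l}(b_2)\bigr)\le d\bigl(p(r_i),\Phi^s_{r_i}(b_2)\bigr)+(l-r_i)\,\eta_s\bigl(p(r_i),b_2,r_i,l-r_i\bigr).
\]
Combining with the inductive hypothesis and bounding $(l-r_i)\le h$ and the single new $\eta$ term by the maximum over all nodes finishes the induction. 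One small point to be careful about here: Lemma~\ref{Lemma1} as stated requires the hypotheses $\Phi^s_{r_i}(b_2)\in B(a,r_a)$ and appropriate smallness of parameters, so in the write-up I should note that these are consequences of the standing assumptions together with the fact (needed anyway to define $p$) that all the polygonal nodes stay in $B(a,r_a)$; strictly this may require shrinking $r_a$, which is the usual harmless localization.

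For the second assertion, I would argue that $\eta_s(p(r_i),b_2,r_i,r_{i+1}-r_i)=d(p(r_i),\Phi^s_{r_i}(b_2))\,? $ — no: unwinding the definition, $\eta_s(b_1',b_2,l,h)=d(b_1',\Phi_l^s(b_2))K_A+\tilde g(l,h)$, so here $\eta_s(p(r_i),b_2,r_i,r_{i+1}-r_i)=d(p(r_i),\Phi^s_{r_i}(b_2))K_A+\tilde g(r_i,r_{i+1}-r_i)$. The first-order estimate just proved, applied at $l=r_i$, shows $d(p(r_i),\Phi^s_{r_i}(b_2))\le d(b_1,b_2)+h\max_j\eta_s(\cdots)$; taking $\max$ over $i$ and using that $\tilde g(l,h)=g(l,h)K_B+Cl^\alpha$ is bounded (by $K_B\sup g + C\epsilon_a^\alpha$, say $M$) on the relevant range, one gets a self-referential inequality of the form $E_h\le (d(b_1,b_2)+hE_h)K_A + M$ where $E_h:=\max_i\eta_s(\cdots)$; for $h$ small ($hK_A<1$) this gives $E_h\le \frac{d(b_1,b_2)K_A+M}{1-hK_A}$, a uniform bound. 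Hence $d(p(r_i),\Phi^s_{r_i}(b_2))\le d(b_1,b_2)+h\cdot(\text{const})\to 0$ as $h\to 0$ — provided we also let $b_1\to b_2$, or more precisely the statement \eqref{Eq1:lemma:Check-solution} should be read with $b_1,b_2$ such that $d(b_1,b_2)\to 0$ as well, matching the ``furthermore'' clause in Lemma~\ref{Lemma1}. Then $\eta_s(p(r_i),b_2,r_i,r_{i+1}-r_i)=d(p(r_i),\Phi^s_{r_i}(b_2))K_A+\tilde g(r_i,r_{i+1}-r_i)\to 0$ uniformly in $i$, since the first summand is uniformly small by the above and the second is $\le g(r_i,r_{i+1}-r_i)K_B+Cr_i^\alpha$, which tends to $0$ uniformly because $g(l,h)\to 0$ as $l,h\to 0$ (Condition B) and $r_i\le h\to 0$.

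The main obstacle I expect is bookkeeping rather than a genuine difficulty: first, making sure every application of Lemma~\ref{Lemma1} has its in-ball hypotheses satisfied (this forces a uniform a priori control keeping the polygonal nodes in $B(a,r_a)$, which one typically gets by first deriving the estimate on a possibly shorter time interval, or by shrinking $r_a$); and second, correctly matching the variable names — the $(b_1,b_2,l,h)$ slots of $\eta_s$ and of Lemma~\ref{Lemma1} get reused with different meanings at each node, and it is easy to misalign the ``$b_1=b_2$'' degenerate case that is needed for the final limit. If the intended reading of \eqref{Eq1:lemma:Check-solution} is instead that $b_1,b_2$ are \emph{fixed} (not tending together), then the limit statement is simply false in general (the $d(b_1,b_2)K_A$ term survives), so I would flag that the hypotheses of the ``Furthermore'' must implicitly include $d(b_1,b_2)\to 0$, exactly as in the corresponding clause of Lemma~\ref{Lemma1}.
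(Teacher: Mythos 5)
Your overall strategy is the paper's: walk along the polygonal nodes, applying Lemma \ref{Lemma1} once per subinterval, and then deduce the limit from the fact that all the arguments of $\eta_s$ become small. However, the inductive step as you formulated it does not close. Your induction hypothesis at the node $r_i$ is $d(p(r_i),\Phi_{r_i}^s(b_2))\leq d(b_1,b_2)+h\max_{j}\eta_s(\cdot)$, and Lemma \ref{Lemma1} then gives, for $r_i\leq l\leq r_{i+1}$, an \emph{additional} term $(l-r_i)\,\eta_s(p(r_i),b_2,r_i,l-r_i)$. Bounding this new term by $h\max_j\eta_s(\cdot)$ and adding it to the hypothesis yields $d(b_1,b_2)+2h\max_j\eta_s(\cdot)$, not $d(b_1,b_2)+h\max_j\eta_s(\cdot)$; iterating, the constant in front of $h\max\eta_s$ grows with the number of nodes, which is exactly what the stated estimate must avoid (the number of nodes is not controlled). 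The correct bookkeeping — which is what the paper does — is to carry through the iteration the \emph{sum}
\begin{equation*}
d(p(l),\Phi_l^s(b_2))\;\leq\; d(b_1,b_2)+r_1\eta_s(b_1,b_2,0,r_1)+(r_2-r_1)\eta_s(p(r_1),b_2,r_1,r_2-r_1)+\cdots+(l-r_i)\eta_s(p(r_i),b_2,r_i,l-r_i),
\end{equation*}
and only at the very end bound it by $\bigl(\sum_j (r_{j+1}-r_j)\bigr)\max_j\eta_s(\cdot)\leq h\max_j\eta_s(\cdot)$, using that the increments telescope to $l\leq h$. With that single change your induction (and hence the first estimate) is fine; the in-ball caveats you mention are the same localization the paper assumes tacitly.

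On the ``furthermore'' clause, your reading agrees with the paper's intent: as used in Theorem \ref{Theorem1} one takes $b_1=b_2$, and for fixed $b_1\neq b_2$ the term $d(p(r_i),\Phi^s_{r_i}(b_2))K_A\to d(b_1,b_2)K_A\neq 0$, so \eqref{Eq1:lemma:Check-solution} must indeed be understood with $d(b_1,b_2)\to 0$ (or $b_1=b_2$); flagging this is correct. Your route to the limit via the self-referential inequality $E_h\leq (d(b_1,b_2)+hE_h)K_A+M$ is more roundabout than needed (and inherits the first part), whereas the speed bound alone gives $d(p(r_i),\Phi_{r_i}^s(b_2))\leq d(b_1,b_2)+2\rho\, h$, after which $\eta_s\to 0$ uniformly in $i$ because $\tilde g(r_i,r_{i+1}-r_i)\to 0$ by Condition B and C; this is the paper's one-line argument.
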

\begin{proof}
If $0\leq l\leq r_1$ then by Lemma \ref{Lemma1}, we have
$$d(p(l), \Phi_{l}^s(b_2))\leq d(b_1,b_2) + l\eta_s(b_1,b_2,0,l)  $$
For $r_1\leq l\leq r_2$, we use Lemma \ref{Lemma1} twice to get
\begin{align*}
d(p(l),\Phi_{l}^s(b_2))&= d(\Phi^{s+r_1}_{l-r_1}(p(r_1))   ,\Phi^s_{r_1+l-r_1}(b_2)) \\
& \leq d(p(r_1),\Phi_{r_1}^s(b_2)) +(l-r_1)\eta_s(p(r_1),b_2,r_1,l-r_1)\\
& \leq d(b_1,b_2) + r_1\eta_s(b_1,b_2,0,r_1) + (l-r_1)\eta_s(p(r_1),b_2,r_1,l-r_1)
\end{align*}
In general, for $r_i\leq l\leq r_{i+1}$, we have
\begin{align*}
d(p(l),\Phi_{l}^s(b_2))& \leq  d(b_1,b_2) + r_1\eta_s(b_1,b_2,0,r_1) + (r_2-r_1)\eta_s(p(r_1),b_2,r_1,r_2-r_1) \\
           & \qquad \qquad \cdots \quad +(l-r_i)\eta_s(p(r_i),b_2,r_i,l-r_i)
\end{align*}
which gives
$$d(p(l), \Phi_l^s(b_2)) \leq d(b_1,b_2) + h \max_{1\leq i \leq k} \eta_s(p(r_i),b_2,r_i,r_{i+1}-r_i)$$

Equation (\ref{Eq1:lemma:Check-solution}) is almost trivial since $d(p(r_i),b_2),r_i,r_{i+1}-r_i $ converge to $0$ as $h \rightarrow 0.$
\end{proof}


\subsection{Existence and uniqueness of a solution curve}
The proof of the  next theorem is similar to the one in \cite{BC}.  We can also
 think of it as a corollary of Theorem \ref{Theorem:existence:sum}. But, to give an
idea for the proof of Theorem \ref{Theorem:existence:sum} which is more complicated, we give a full proof here.
\begin{thm}[Existence]\label{Theorem1}
Let $\Phi:X\times[0,1]\times[0,\infty)$ be an arc field satisfying Condition A, B and C. For  a given $a\in X$ and $t\in [0,\infty)$,
 there exists  a solution curve $\sigma:[t,t+c)\rightarrow  X$ with initial position $ a$ at time $t.$
\end{thm}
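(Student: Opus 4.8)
The plan is to build $\sigma$ as the uniform limit of Euler-type polygonal approximations on dyadic partitions, in the spirit of the time-independent construction of \cite{BC} but with Lemmas \ref{One-step}--\ref{Check-solution1} replacing their time-independent analogues. \textbf{Localization.} Fix $a$ and $t$ and, using Conditions A, B, C together with the local boundedness of $\rho$, choose one radius $r_a>0$, one $\epsilon_a\in(0,1]$ and one $T_t>t$ so that all three conditions hold with their bounds $K_A,K_B$ on $B(a,r_a)\times[t,T_t]$ and the closed ball $\overline{B}(a,r_a)$ is complete (local completeness of $X$). Put $M:=\sup\{\rho(x,s):x\in B(a,r_a),\ s\in[t,T_t]\}<\infty$, so that $d(\Phi^s_h(x),x)\le Mh$ for all such $x,s$ and all $h\le\epsilon_a$, and then pick $c>0$ with $c<\epsilon_a$, $t+c\le T_t$ and $Mc\le r_a/2$.

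\textbf{Approximations and Cauchy estimate.} For each $n$ partition $[t,t+c)$ by $t^n_j:=t+j2^{-n}$ and define $\sigma_n$ by $\sigma_n(t)=a$ and recursively $\sigma_n(r):=\Phi^{t^n_j}_{\,r-t^n_j}(\sigma_n(t^n_j))$ on $[t^n_j,t^n_{j+1}]$. A short induction using $Mc\le r_a/2$ keeps every $\sigma_n$ in $\overline{B}(a,r_a/2)$, so the hypotheses of Lemmas \ref{One-step}--\ref{Check-solution1} are always met, and each $\sigma_n$ is $M$-Lipschitz. Comparing $\sigma_n$ with its refinement $\sigma_{n+1}$ node by node (one step of $\sigma_n$ against two of $\sigma_{n+1}$) and invoking Lemma \ref{One-step} gives, for the nodal distances $a^n_j:=d(\sigma_n(t^n_j),\sigma_{n+1}(t^n_j))$, the recursion $a^n_{j+1}\le a^n_j(1+2^{-(n+1)}K_A)^2+2^{-(n+1)}\tilde g(2^{-(n+1)},2^{-(n+1)})$ with $a^n_0=0$; iterating (using $(1+2^{-(n+1)}K_A)^{2j}\le e^{2cK_A}$ and $j2^{-(n+1)}\le c/2$) and adding the $M$-Lipschitz control off the nodes yields
\[
\sup_{r\in[t,t+c)}d(\sigma_n(r),\sigma_{n+1}(r))\ \le\ \tfrac{c}{2}e^{2cK_A}\bigl(K_B\,g(2^{-(n+1)},2^{-(n+1)})+C\,2^{-(n+1)\alpha}\bigr)+2M2^{-n}.
\]
By the summability in \eqref{Condition-B2} and by $\alpha>0$, the right-hand side is the general term of a convergent series, so $(\sigma_n)$ is uniformly Cauchy; as its values lie in the complete set $\overline{B}(a,r_a)$ it converges uniformly to an $M$-Lipschitz curve $\sigma:[t,t+c)\to X$ with $\sigma(t)=a$, which is the solution candidate.

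\textbf{Verification.} It remains to check $\lim_{h\to0^+}d(\sigma(s+h),\Phi^s_h(\sigma(s)))/h=0$ for each $s\in[t,t+c)$. Fix such an $s$ and a small $h$, and let $p_n$ be the polygonal path of Lemma \ref{Check-solution1} issuing from $\sigma_n(s)$ on the partition of $[0,h]$ obtained by translating the nodes $t^n_j\in(s,s+h)$ back by $s$. By construction $p_n(\cdot-s)$ and $\sigma_n|_{[s,s+h]}$ agree except on the single partial sub-step straddling $s$, where $\sigma_n$ carries the time-label $t^n_k$ and $p_n$ carries the time-label $s$; the estimate \eqref{Condition-D} bounds that discrepancy by $(\mathrm{length})\cdot\tilde g(2^{-n},2^{-n})$, and propagating it through the subsequent (now matching) arcs by Condition A gives $\sup_r d(\sigma_n(r),p_n(r-s))\to0$ as $n\to\infty$, hence $p_n(r-s)\to\sigma(r)$. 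Now apply Lemma \ref{Check-solution1} with $b_1=\sigma_n(s)$ and $b_2=\sigma(s)$ and let $n\to\infty$:
\[
d(\sigma(s+h),\Phi^s_h(\sigma(s)))\ \le\ h\,\limsup_{n}\ \max_i\,\eta_s\bigl(p_n(r_i),\sigma(s),r_i,r_{i+1}-r_i\bigr).
\]
The $M$-Lipschitz bound forces $d(p_n(r_i),\Phi^s_{r_i}(\sigma(s)))\le 2Mh$, so $\eta_s\le 2MhK_A+\tilde g(r_i,r_{i+1}-r_i)$; since $r_i$ and $r_{i+1}-r_i$ are $\le h$, the right-hand side tends to $0$ as $h\to0^+$ by $\lim_{l,h\to0^+}g(l,h)=0$ (this is essentially \eqref{Eq1:lemma:Check-solution}). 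Dividing by $h$ completes the proof.

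\textbf{Where the work is.} The two delicate points are the a priori containment of the whole construction inside one complete ball on which A, B, C hold simultaneously with uniform constants, and --- the one genuinely new feature relative to \cite{BC} --- the reconciliation, in the verification step, of $\sigma_n|_{[s,s+h]}$ with an honest Lemma \ref{Check-solution1}-polygonal path based at $(\sigma_n(s),s)$ when $s$ is not a dyadic node; this is exactly where \eqref{Condition-D}, hence Condition C, is used. The summability condition \eqref{Condition-B2} is precisely what makes the dyadic-refinement scheme converge.
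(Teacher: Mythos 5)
Your proposal is correct and follows essentially the same route as the paper: dyadic Euler-type discretized solutions kept in a complete ball, a telescoping Cauchy estimate via Lemma \ref{One-step} whose convergence rests on the summability in \eqref{Condition-B2} together with the $Ch^{\alpha}$ term from Condition C, and verification of the solution property through Lemma \ref{Check-solution1}. The only (harmless) divergence is cosmetic: where you reconcile a non-dyadic base time $s$ by building an auxiliary polygonal path at $(\sigma_n(s),s)$ and controlling the mismatch with \eqref{Condition-D} and Condition A, the paper instead shifts to the nearest dyadic node and absorbs the error using the equi-Lipschitz bound and Lemma \ref{New}.
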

\begin{proof}
For a positive integer n, we define the n-th discretized solution by
\begin{align*}
\xi_n(s):=
\left \{ \begin{matrix} \Phi_s^t(a) & 0\leq s\leq \frac{1}{2^n}\\
\\
\Phi_{s-2^{-n}}^{t+2^{-n}}(\xi_n(\frac{1}{2^n})) & \frac{1}{2^n}\leq s \leq  \frac{2}{2^n}\\
.\\
.\\
.\\
\Phi_{s-i\cdot 2^{-n}}^{t+i\cdot 2^{-n}}(\xi_n(\frac{i}{2^n})) & \frac{i}{2^n}\leq s\leq \frac{i+1}{2^n}\\
.\\
.
\end{matrix} \right.
\end{align*}
\\
Suppose  $r,l>0$ are chosen so that $\rho(a,t;r,l)<\infty$. If $\rho(a,t;r,l)=0,$ then $\sigma(s):=a$ defines a solution curve. Thus we assume
$\rho(a,t;r,l)>0,$ and let
\begin{equation}
c:= \min\left \{\frac{r}{\rho(a,t;r,l)},l \right \}.
\end{equation}
It is easy to see that we have $\xi_n(s)\in B(a,r)$ for $0\leq s<
c$. This implies $\{\xi_n\}_{n=1}^\infty$ is equi-Lipschitz with Lipschitz constant $\rho(a,t;r,l).$ Moreover, by choosing $r$ smaller if necessary, we may assume
that there are constants $K_A$, $K_B$ and $\epsilon\in(0,1]$ such
that $\Lambda(p,q,h)\leq K_A$ and $\Omega(p,l',h)\leq K_B$ for all
$p,q\in B(a,r)$ and $l',h\in [0,\epsilon]$. We may also
assume that
 $\overline{B(a,r)}$ is a complete metric space.



\begin{figure}[h]
\centering\centerline{
\includegraphics[width=16cm, height=10cm]{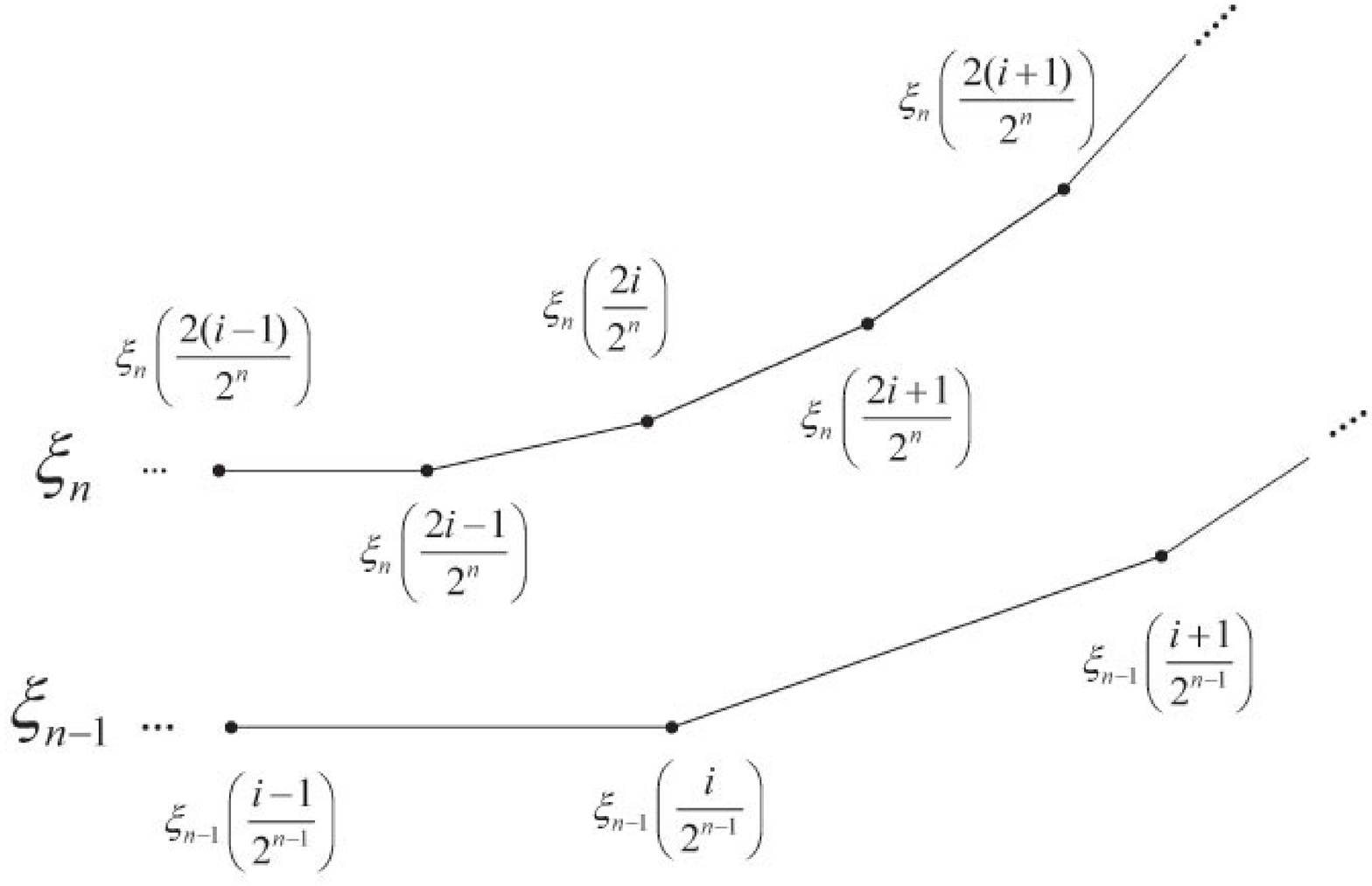}}
    \caption{
    }
\label{fig1_overload}
\end{figure}

Let us first estimate the uniform distance between $\xi_n$ and $\xi_{n-1}$.
We apply Lemma \ref{One-step} with $h=1/2^n$ and $b_1=b_2= a$ to get
\begin{align*}
d\bigl (\xi_n(\frac{2}{2^n}),\xi_{n-1}(\frac{2}{2^n}) \bigr)&\leq \frac{1}{2^n}\tilde{g}\bigl (\frac{1}{2^n},\frac{1}{2^n} \bigr)
\end{align*}
Similarly, we apply Lemma \ref{One-step} multiple times and get
\begin{align*}
d \bigl (\xi_n(\frac{2\cdot 2 }{2^n}),\xi_{n-1}(\frac{2\cdot 2}{2^n}) \bigr)&\leq d \bigl(\xi_n(\frac{2}{2^n}),\xi_{n-1}(\frac{2}{2^n})\bigr )
\bigl(1+\frac{K_A}{2^n} \bigr)^2 + \frac{1}{2^n}\tilde{g}\bigl(\frac{1}{2^n},\frac{1}{2^n}\bigr)\\
& \leq \frac{1}{2^n}\tilde{g}\bigl(\frac{1}{2^n},\frac{1}{2^n}\bigr) \bigl[1+(1+\frac{K_A}{2^n})^2\bigr]\\
d\bigl(\xi_n(\frac{3\cdot 2 }{2^n}),\xi_{n-1}(\frac{3\cdot 2}{2^n})\bigr)&\leq \frac{1}{2^n}\tilde{g}\bigl(\frac{1}{2^n},\frac{1}{2^n}\bigr)
\bigl[1+(1+\frac{K_A}{2^n})^2+ (1+\frac{K_A}{2^n})^{2\cdot 2}\bigr]
\end{align*}
In general, for all i so that $ i\cdot 2/2^{-n}\leq c $, we have
\begin{align}\label{Eq1:Theorem:existence}\nonumber
d\bigl(\xi_n(\frac{i\cdot 2 }{2^n}),\xi_{n-1}(\frac{i\cdot 2}{2^n})\bigr)&\leq \frac{1}{2^n}\tilde{g}\bigl(\frac{1}{2^n},\frac{1}{2^n}\bigr)\sum_{j=0}^{i-1}
(1+2^{-n}K_A)^{2\cdot j}\\\nonumber
& =\frac{1}{2^n}\tilde{g}\bigl(\frac{1}{2^n},\frac{1}{2^n}\bigr)\frac{(1+2^{-n}K_A)^{2i}-1}{(1+2^{-n}K_A)^2-1}\\\nonumber
&\leq \tilde{g}\bigl(\frac{1}{2^n},\frac{1}{2^n}\bigr) \frac{(1+2^{-n}K_A)^{c2^n}-1}{K_A(2+2^{-n}K_A)}\\\nonumber
&\leq \tilde{g}\bigl(\frac{1}{2^n},\frac{1}{2^n}\bigr) \frac{e^{cK_A}-1}{K_A(2+2^{-n}K_A)}\\
&= \tilde{g}\bigl(\frac{1}{2^n},\frac{1}{2^n}\bigr) K
\end{align}
where $K:=\frac{e^{cK_A}-1}{2K_A}$ is a constant independent of $n$.

So for any $s\in [0,c)$, let $i$ be an integer such that
$$\frac{2i}{2^n}\leq s < \frac{2(i+1)}{2^n}$$
then we have
\begin{align}\label{DiscriteOnestep-difference}\nonumber
d(\xi_n(s),\xi_{n-1}(s)) &\leq d\bigl(\xi_n(s),\xi_n(\frac{2i}{2^n})\bigr) + d\bigl(\xi_n(\frac{2i}{2^n}),\xi_{n-1}(\frac{2i}{2^n})\bigr)
+ d\bigl(\xi_{n-1}(\frac{2i}{2^n}),\xi_{n-1}(s)\bigr)\\\nonumber
& \leq  {\rm Lip}(\xi_n)\bigl(s-\frac{2i}{2^n}\bigr) + \tilde{g}\bigl(\frac{1}{2^n},\frac{1}{2^n}\bigr) K + {\rm Lip}(\xi_{n-1})\bigl(s-\frac{2i}{2^n}\bigr)\\
 & \leq \frac{4}{2^n}\rho(a,t;r,l) + \tilde{g}\bigl(\frac{1}{2^n},\frac{1}{2^n}\bigr)K
\end{align}
where we exploit the equi-Lipschitz property of $\xi_n$ and (\ref{Eq1:Theorem:existence}).


Next, we exploit (\ref{DiscriteOnestep-difference}) to show that  $\{\xi_n \}$ is a Cauchy sequence in the uniform topology. For any $s\in [0,c)$, we have
\begin{align}\label{Discrite-difference}\nonumber
d(\xi_n(s),\xi_{n+m}(s)) &\leq \sum_{j=0}^{m-1}d(\xi_{n+j}(s),\xi_{n+j+1}(s)) \\\nonumber
 & \leq \sum_{j=0}^{m-1} \frac{4}{2^{(n+j+1)}}\rho(a,t;r,l) + \tilde{g}\bigl(\frac{1}{2^{(n+j+1)}},\frac{1}{2^{(n+j+1)}}\bigr)K\\
 & \leq \frac{4}{2^{n+1}}\rho(a,t;r,l) + K\sum_{j=n+1}^\infty \tilde{g}(2^{-j},2^{-j}) \rightarrow 0 \quad {\rm as }\quad n\rightarrow \infty
\end{align}

Since $\xi_n(s)$ is in the complete space $\overline{B(a,r)},$ we know that $\xi_n$ converges uniformly and we define
$\tilde{\sigma}:[0,c)\rightarrow X$ by
$$\tilde{\sigma}(s)=\lim_{n\rightarrow \infty}\xi_n(s) $$

It is trivial to see $\tilde{\sigma}(0)=a$ and let us check that

\begin{equation}\label{Eq-solution1}
\lim_{h\rightarrow 0} \frac{d(\tilde{\sigma}(s+h), \Phi_h^{t+s}(\tilde{\sigma}(s)))}{h} = 0
\end{equation}
holds for all $s\in[0,c)$.
Let $\epsilon>0$ and $h>0$  be fixed such that $s+h<c.$
From triangle inequality, we have
\begin{align}\label{Eq1:Check-solution1}\nonumber
\frac{d(\tilde{\sigma}(s+h), \Phi_h^{t+s}(\tilde{\sigma}(s)))}{h}  &\leq \frac{d(\tilde{\sigma}(s+h), \xi_n(s+h))}{h}
          + \frac{d(\xi_n(s+h), \Phi_h^{t+s}(\xi_n(s)))}{h} \\
            & \qquad \qquad \qquad + \frac{d(\Phi_h^{t+s}(\xi_n(s)) \Phi_h^{t+s}(\tilde{\sigma}(s)))}{h}
\end{align}
Since $\xi_n$ converges uniformly, we can choose $n$ large enough so that
\begin{equation}\label{Eq2:Check-solution1}
 \frac{d(\tilde{\sigma}(s+h), \xi_n(s+h))}{h} + \frac{d(\Phi_h^{t+s}(\xi_n(s)) \Phi_h^{t+s}(\tilde{\sigma}(s)))}{h}\leq \frac{\epsilon}{2}
 \end{equation}
We combine (\ref{Eq1:Check-solution1}) and (\ref{Eq2:Check-solution1}) to get
\begin{equation}\label{Eq3:Check-solution1}
 \frac{d(\tilde{\sigma}(s+h), \Phi_h^{t+s}(\tilde{\sigma}(s)))}{h}
\leq  \frac{d(\xi_n(s+h), \Phi_h^{t+s}(\xi_n(s)))}{h} +\frac{\epsilon}{2}
\end{equation}
We need to estimate the second term of (\ref{Eq3:Check-solution1}).

Let $i$ be such that $i/2^n\leq s<(i+1)/2^n,$ then
\begin{align}\label{Eq3}\nonumber
 d(\xi_n(s+h), \Phi_h^{t+s}(\xi_n(s)))&\leq  d\bigl(\xi_n(s+h), \xi_n(\frac{i}{2^n}+h)\bigr) +  d\bigl(\xi_n(\frac{i}{2^n}+h), \Phi_h^{t+\frac{i}{2^n}}
 (\xi_n(\frac{i}{2^n}))\bigr)\\
& \qquad \qquad +  d\bigl(\Phi_h^{t+\frac{i}{2^n}}(\xi_n(\frac{i}{2^n})), \Phi_h^{t+s}(\xi_n(s))\bigr)
 \end{align}
 Let us estimate the righthand side of (\ref{Eq3}) term by term.  First, by the Lipschitz property of $\xi_n$, we have
\begin{align}\label{Eq4}\nonumber
d\bigl(\xi_n(s+h), \xi_n(\frac{i}{2^n}+h)\bigr) &\leq \bigl(s-\frac{i}{2^n}\bigr)\rho(a,t;r_a,\epsilon_a)\\
&\leq \frac{1}{2^n}\rho(a,t;r_a,\epsilon_a)
 \end{align}
For the second term, we use Lemma \ref{Check-solution1} with $\xi_n(\frac{i}{2^n}):=b_1=b_2$.
\begin{align}\label{Eq5}
d\bigl(\xi_n(\frac{i}{2^n}+h), \Phi_h^{t+\frac{i}{2^n}}(\xi_n(\frac{i}{2^n}))\bigr)\leq h \sup_{0\leq r\leq h} \eta\bigl(\xi_n(\frac{i}{2^n} + r),
\xi_n(\frac{i}{2^n}),r,\frac{1}{2^n}\bigr)
 \end{align}
By using Lemma \ref{New} and the Lipschitz property of $\xi_n$, we can estimate the last term
 \begin{align}\label{Eq6}\nonumber
d\bigl(\Phi_h^{t+\frac{i}{2^n}}(\xi_n(t+\frac{i}{2^n})), \Phi_h^s(\xi_n(s))\bigr)&\leq d\bigl(\xi_n(\frac{i}{2^n}),\xi_n(s)\bigr)(1+hK_A)+\frac{Ch}{2^{n\alpha}}\\
& \leq \frac{1}{2^n}\rho(a,t;r_a,\epsilon_a)(1+hK_A) + \frac{Ch}{2^{n\alpha}}
 \end{align}
We combine equations (\ref{Eq3}),(\ref{Eq4}),(\ref{Eq5}) and (\ref{Eq6}), and assume $n$ is large enough to have
\begin{align}\label{Eq7}\nonumber
 d(\xi_n(s+h), \Phi_h^{t+s}(\xi_n(s)))&\leq  h \sup_{0\leq r\leq h} \eta\bigl(\xi_n(\frac{i}{2^n} + r),
\xi_n(\frac{i}{2^n}),r,\frac{1}{2^n}\bigr) +\frac{1}{2^n}\rho(a,t;r_a,\epsilon_a)(2+hK_A) + \frac{Ch}{2^{n\alpha}}\\
&\leq h \sup_{0\leq r\leq h} \eta \bigl(\xi_n(\frac{i}{2^n} + r),
\xi_n(\frac{i}{2^n}),r,\frac{1}{2^n}\bigr)+ \frac{\epsilon}{2}
\end{align}
We combine (\ref{Eq3:Check-solution1}) and (\ref{Eq7}), and let $h\rightarrow 0$ with $n$  large
\begin{align*}
\lim_{h \rightarrow 0} \frac{d(\tilde{\sigma}(s+h), \Phi_h^{t+s}(\tilde{\sigma}(s)))}{h}
&\leq  \lim_{h \rightarrow 0}  \sup_{0\leq r\leq h} \eta\bigl(\xi_n(\frac{i}{2^n} + r),
\xi_n(\frac{i}{2^n}),r,\frac{1}{2^n}\bigr) +\epsilon\\
& =\epsilon
\end{align*}
This gives (\ref{Eq-solution1}). We define $\sigma:[t,t+c )\rightarrow X$ by
$$ \sigma(s):=\tilde{\sigma}(s-t),\qquad s\in[t,t+c)$$
It is trivial that  $\sigma(t)=\tilde{\sigma}(0)=a$ and (\ref{Eq-solution1}) implies that $\sigma$ satisfies (\ref{Definition-solution}). This  concludes  the proof.
\end{proof}

\begin{thm}[Uniqueness]\label{Theorem-uniqueness1}
Let $\sigma_{a,t}:[t,t+c)\rightarrow X$ be a solution curve of an arc field $\Phi$ with initial position $a$ at time $t$, and
let $\sigma_{b,u}:[u,u+c)\rightarrow X$ be a solution curve of $\Phi$ with initial position $b$ at time $u$.
 Then we have
$$d(\sigma_{a,t}(t+s), \sigma_{b,u}(u+s))\leq e^{K_As}d(a,b) + \tilde{C}|t-u|^\alpha , \quad {\rm for} \quad s\in[0,c) $$
where $\tilde{C}$ is a constant depending only on $c$ and $K_A$.
\end{thm}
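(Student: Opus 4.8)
The plan is to run a Gronwall-type argument directly on the two solution curves, using the discretization from the existence proof only implicitly through the one-step estimates. First I would set up the quantity $\phi(s) := d(\sigma_{a,t}(t+s),\sigma_{b,u}(u+s))$ and aim to show it satisfies an approximate differential inequality $\phi(s+h) \leq \phi(s)(1+hK_A) + Ch|t-u|^\alpha + o(h)$, from which integrating (i.e.\ comparing with the solution of the linear ODE $y' = K_A y + C|t-u|^\alpha$) yields $\phi(s) \leq e^{K_A s}\phi(0) + \tilde{C}|t-u|^\alpha$ with $\tilde{C} = C(e^{K_A c}-1)/K_A$ (and $\tilde{C} = Cc$ if $K_A = 0$). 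The work is in establishing the approximate inequality. By the solution-curve property \eqref{Definition-solution}, for small $h$ we have $d(\sigma_{a,t}(t+s+h),\Phi_h^{t+s}(\sigma_{a,t}(t+s))) = o(h)$ and likewise $d(\sigma_{b,u}(u+s+h),\Phi_h^{u+s}(\sigma_{b,u}(u+s))) = o(h)$. So by the triangle inequality
\[
\phi(s+h) \leq d\bigl(\Phi_h^{t+s}(\sigma_{a,t}(t+s)),\Phi_h^{u+s}(\sigma_{b,u}(u+s))\bigr) + o(h),
\]
and Lemma \ref{New} bounds the first term by $\phi(s)(1+hK_A) + Ch|(t+s)-(u+s)|^\alpha = \phi(s)(1+hK_A) + Ch|t-u|^\alpha$, which is exactly the approximate inequality we want.

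Next I would turn the approximate inequality into the stated integrated bound. The clean way is a continuous Gronwall argument: fix $s_0 \in [0,c)$, let $\psi(s)$ be the solution of $\psi' = K_A\psi + C|t-u|^\alpha$ with $\psi(0) = \phi(0) = d(a,b)$, namely $\psi(s) = e^{K_A s}d(a,b) + \frac{C|t-u|^\alpha}{K_A}(e^{K_A s}-1)$, and show $\phi(s) \leq \psi(s)$ on $[0,s_0]$ by considering $\sup\{s : \phi \leq \psi + \delta \text{ on } [0,s]\}$ for arbitrary $\delta > 0$ and using the one-step estimate to rule out that this sup is an interior point. Since $\frac{e^{K_A s}-1}{K_A} \leq \frac{e^{K_A c}-1}{K_A}$ for $s \in [0,c)$, we may take $\tilde{C} := \frac{C(e^{K_A c}-1)}{K_A}$, a constant depending only on $c$ and $K_A$ as claimed. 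One should also record that $o(h)$ is uniform enough: the solution-curve condition gives, for each $s$, a rate, but since we only need the inequality to pass through the Gronwall comparison at each point, pointwise smallness of the $o(h)$ term suffices (alternatively, one notes that $\phi$ is Lipschitz because each solution curve is Lipschitz with constant bounded by the relevant $\rho$, so the comparison argument is legitimate).

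The main obstacle I anticipate is a bookkeeping issue rather than a conceptual one: making sure Lemma \ref{New} actually applies at each step, i.e.\ that $\sigma_{a,t}(t+s)$ and $\sigma_{b,u}(u+s)$ lie in a common ball $B(a',r_{a'})$ on which Conditions A and C hold with uniform constants $K_A$, $C$, $\alpha$, and that $h$ can be taken in the common $[0,\epsilon_{a'}]$ and $t+s, u+s$ in the common $[t',T_{t'}]$. This requires first restricting to a subinterval $[0,c')$ with $c'$ small enough that both curves stay in one such ball (using continuity of the curves and local boundedness), proving the estimate there, and then either iterating over a chain of overlapping balls or simply noting that $c$ in the theorem statement is already implicitly such a "good" radius. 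A secondary, minor point is the degenerate case $K_A \leq 0$ (Condition A only gives $\Lambda$ bounded \emph{above}, so $K_A$ could be nonpositive): the formula $\tilde{C} = C(e^{K_A c}-1)/K_A$ still makes sense by continuity at $K_A = 0$ and is in fact better-behaved, so no separate treatment is really needed beyond remarking on it.
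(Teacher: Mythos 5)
Your proof is essentially correct, but it follows a genuinely different route from the paper. The paper first splits, via the triangle inequality, into a term with the same initial time and different initial positions and a term with the same initial position and different initial times (its inequality (\ref{Eq1:Theorem:uniqueness1})); the first term is handled by a Dini-derivative/Gronwall argument using only Condition A, while the second is handled by comparing the two discretized Euler schemes $\xi^t_n$ and $\xi^u_n$ step by step with Conditions A and C and passing to the limit $n\to\infty$. You instead run a single Gronwall comparison directly on $\phi(s)=d(\sigma_{a,t}(t+s),\sigma_{b,u}(u+s))$, using the defining property (\ref{Definition-solution}) of both curves together with Lemma \ref{New}, which packages Conditions A and C exactly as needed; this is shorter, produces the same constant $\tilde{C}=C(e^{K_Ac}-1)/K_A$, and has the additional merit that it works with arbitrary solution curves, whereas the paper's treatment of the time-shift term tacitly identifies the given solution $\tilde{\sigma}_{b,t}$ (resp.\ $\tilde{\sigma}_{b,u}$) with the limit of its discretizations, an identification that itself rests on the same-time uniqueness estimate. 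What the paper's split buys is a cleaner separation of the two effects (position dependence with factor $e^{K_As}$, time dependence with factor $|t-u|^\alpha$) and a part of the argument (the discrete one) that avoids differential-inequality machinery. Two caveats you already flag are worth keeping in mind but do not distinguish your argument from the paper's: the localization needed for Lemma \ref{New} (both curves in a common ball, both time arguments in a common window $[t',T_{t'}]$, constants uniform along the curves), and the continuity of $\phi$ needed to pass from $D^+\phi\leq K_A\phi+C|t-u|^\alpha$ to the integrated bound — the paper's own Dini-derivative step for $g(s)=e^{-K_As}d(\tilde{\sigma}_{a,t}(s),\tilde{\sigma}_{b,t}(s))$ uses the same fact without comment, so your remark on the Lipschitz character of solution curves puts you at (or slightly above) the paper's level of rigor. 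Your observation about the degenerate case $K_A\leq 0$ is also a sensible addition.
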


\begin{proof}
Let us first define $\tilde{\sigma}_{\alpha,\tau} : [0,c)\rightarrow X$ by
$$ \tilde{\sigma}_{\alpha,\tau}(s):={\sigma}_{\alpha,\tau}(\tau+s),\qquad {\rm for} \quad s\in[0,c) $$
where $\alpha \in X$ and $\tau\in[0,\infty)$. If we can show
\begin{equation}\label{Check:uniqueness}
d(\tilde{\sigma}_{a,t}(s), \tilde{\sigma}_{b,u}(s))\leq e^{K_As}d(a,b) + \tilde{C}|t-u|^\alpha , \quad {\rm for} \quad s\in[0,c)
\end{equation}
for some constant $\tilde{C}$ which is depending only on $c$ and $K_A$ then we are done.

Triangle inequality gives
\begin{align}\label{Eq1:Theorem:uniqueness1}
d(\tilde{\sigma}_{a,t}(s), \tilde{\sigma}_{b,u}(s)) \leq d(\tilde{\sigma}_{a,t}(s), \tilde{\sigma}_{b,t}(s)) + d( \tilde{\sigma}_{b,t}(s),
\tilde{\sigma}_{b,u}(s))
\end{align}
\\
First, let us estimate the second term in the right hand side of (\ref{Eq1:Theorem:uniqueness1}).  Let $\xi_n^t$ be the n-th discretized solution of
$\tilde{\sigma}_{b,t}$ and let
$\xi_n^u$ be the n-th discretized solution of $\tilde{\sigma}_{b,u}$. We exploit Condition C and get
\begin{align}
d_1:= d \bigl( \xi_n^t(\frac{1}{2^n}), \xi_n^u(\frac{1}{2^n})\bigr) \leq \frac{C}{2^n}|t-u|^\alpha
\end{align}
\\
Again,  triangle inequality gives
\begin{align}\label{Eq2:Theorem:uniqueness1}\nonumber
d_2:&= d\bigl( \xi_n^t(\frac{2}{2^n}), \xi_n^u(\frac{2}{2^n})\bigr) \\\nonumber
&\leq d\bigl( \xi_n^t(\frac{2}{2^n}), \Phi_{2^{-n}}^{u+2^{-n}}(\xi_n^t(\frac{1}{2^n}))\bigr)
 +d\bigl( \Phi_{2^{-n}}^{u+2^{-n}}(\xi_n^t(\frac{1}{2^n})), \xi_n^u(\frac{2}{2^n})\bigr)\\
 &= d\bigl( \Phi_{2^{-n}}^{t+2^{-n}}(\xi_n^t(\frac{1}{2^n})), \Phi_{2^{-n}}^{u+2^{-n}}(\xi_n^t(\frac{1}{2^n}))\bigr)
 +d \bigl( \Phi_{2^{-n}}^{u+2^{-n}}(\xi_n^t(\frac{1}{2^n})), \Phi_{2^{-n}}^{u+2^{-n}}(\xi_n^u(\frac{1}{2^n}))\bigr)
\end{align}
\\
We exploit Condition C to get
\begin{align}\label{Eq3:Theorem:uniqueness1}
d\bigl( \Phi_{2^{-n}}^{t+2^{-n}}(\xi_n^t(\frac{1}{2^n})), \Phi_{2^{-n}}^{u+2^{-n}}(\xi_n^t(\frac{1}{2^n}))\bigr)
 \leq \frac{C}{2^n}|t-u|^\alpha
\end{align}
and Condition A gives
\begin{align}\label{Eq4:Theorem:uniqueness1}\nonumber
d \bigl( \Phi_{2^{-n}}^{u+2^{-n}}(\xi_n^t(\frac{1}{2^n})), \Phi_{2^{-n}}^{u+2^{-n}}(\xi_n^u(\frac{1}{2^n}))\bigr)
&\leq d\bigl( \xi_n^t(\frac{1}{2^n}), \xi_n^u(\frac{1}{2^n})\bigr) \bigl(1+\frac{K_A}{2^n}\bigr)\\
&= d_1\bigl(1+\frac{K_A}{2^n}\bigr)
\end{align}
\\
We combine(\ref{Eq2:Theorem:uniqueness1}), (\ref{Eq3:Theorem:uniqueness1}) and (\ref{Eq4:Theorem:uniqueness1})
\begin{align*}
d_2 &\leq d_1\bigl(1+\frac{K_A}{2^n}\bigr) + \frac{C}{2^n}|t-u|^\alpha\\
& \leq  \frac{C}{2^n}|t-u|^\alpha \bigl[ 1 + (1+\frac{K_A}{2^n}) \bigr]
\end{align*}
\\
Similarly, for all $k$ such that $\frac{k}{2^n}\leq c$, we have
\begin{align}\label{Eq5:Theorem:uniqueness1}\nonumber
d_k:&= d\bigl( \xi_n^t(\frac{k}{2^n}), \xi_n^u(\frac{k}{2^n})\bigr) \\\nonumber
&\leq \frac{C}{2^n}|t-u|^\alpha\bigl [ 1 + (1+\frac{K_A}{2^n})+ \cdots + (1+\frac{K_A}{2^n})^{k-1}\bigr]\\\nonumber
&\leq \frac{C|t-u|^\alpha}{2^n} \frac{[1+\frac{K_A}{2^n}]^k -1}{1+\frac{K_A}{2^n} -1}\\
&\leq \frac{C(e^{K_A/c}-1)}{K_A}|t-u|^\alpha
\end{align}
\\
This means, for all $s$ such that $s < c$
\begin{align}\label{Eq5:Theorem:uniqueness1}
d( \xi_n^t(s), \xi_n^u(s) \leq \tilde{C}|t-u|^\alpha
\end{align}
where $ \tilde{C}:=C(e^{K_A/c}-1)/K_A$. Since (\ref{Eq5:Theorem:uniqueness1}) is true for all $n$, we have
\begin{align}\label{Eq6:Theorem:uniqueness1}
d( \tilde{\sigma}_{b,t}(s), \tilde{\sigma}_{b,u}(s) \leq \tilde{C}|t-u|^\alpha
\end{align}

Now, let us estimate the first term in the right hand side of (\ref{Eq1:Theorem:uniqueness1}). We define
$$g(s):= e^{-K_As}d(\tilde{\sigma}_{a,t}(s),\tilde{\sigma}_{b,t}(s)) $$
For $h\geq 0$, we have
\begin{align*}
g(s+h)-g(s)&=e^{-K_A(s+h)}d(\tilde{\sigma}_{a,t}(s+h),\tilde{\sigma}_{b,t}(s+h))-e^{-K_As}d(\tilde{\sigma}_{a,t}(s),\tilde{\sigma}_{b,t}(s))\\
&\leq e^{-K_A(s+h)}(d(\Phi_h^{t+s}(\tilde{\sigma}_{a,t}(s)),\Phi_h^{t+s}(\tilde{\sigma}_{b,t}(s))+o(h))-e^{-K_As}d(\tilde{\sigma}_{a,t}(s),\tilde{\sigma}_{b,t}(s))\\
&\leq e^{-K_As}e^{-K_Ah}d(\tilde{\sigma}_{a,t}(s),\tilde{\sigma}_{b,t}(s))(1+K_Ah)-e^{-K_As}d(\tilde{\sigma}_{a,t}(s),\tilde{\sigma}_{b,t}(s))+o(h) \\
&= (e^{-K_Ah}(1+K_Ah)-1 )e^{-K_As}d(\tilde{\sigma}_{a,t}(s),\tilde{\sigma}_{b,t}(s))+o(h)\\
&= o(h)(g(s)+1)
\end{align*}
Hence,
$$D^+g(s):={\limsup_{h\rightarrow 0^+}}\bigl(\frac{g(s+h)-g(s)}{h}\bigr)\leq 0 $$
Consequently, $g(t)\leq g(0)$ equivalently
\begin{equation}\label{Eq7:Theorem:uniqueness1}
d(\tilde{\sigma}_{a,t}(s), \tilde{\sigma}_{b,t}(s))\leq e^{K_As}d(a,b), \quad {\rm for }\quad s\in[0,c)
\end{equation}
(\ref{Eq6:Theorem:uniqueness1}) and (\ref{Eq7:Theorem:uniqueness1}) together with (\ref{Eq1:Theorem:uniqueness1}) give (\ref{Check:uniqueness}) and
conclude the proof.
\end{proof}

\subsection{Generating flows}
As a corollary of Theorem \ref{Theorem1} and Theorem \ref{Theorem-uniqueness1}, if an arc field $\Phi$ satisfies Condition A,B and C then there is a
unique solution curve $\sigma_{a,t}: [t, t + c_{a,t}) \rightarrow X$ with initial position $a$ at time $t,$ for each $a\in X$
and $t\in [0,\infty)$. To guarantee that $c_{a,t}=\infty$ for all $a\in X$ and $t\in[0,\infty)$, we borrow the idea of \cite{BC}.

\begin{defn}
An arc field $\Phi$ is said to have {\it linear speed growth} if there is a point $x\in X$ positive constants $c_1(x)$ and $c_2(x)$ such that for all
$r>0$ and $t,l>0$
\begin{equation}\label{linear growth1}
\rho(x,t;r,l) \leq c_1(x)r + c_2(x)
\end{equation}
\end{defn}
\begin{thm}
Let $\Phi$ be an arc field which has linear speed growth. Suppose that at each point $a\in X$ and $t\in[0,\infty)$, $\Phi$ has a solution curve
$\sigma_{a,t}:[t,t + c_{a,t})\rightarrow X$ with initial position $a$ at time $t$. Then $c_{a,t}$ can be chosen to be $\infty$.
\end{thm}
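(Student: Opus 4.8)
The plan is to argue by contradiction, following the standard maximal-interval blow-up argument from ODE theory, adapted to the arc-field setting. Suppose that for some $a\in X$ and $t\in[0,\infty)$ the solution curve $\sigma_{a,t}$ cannot be extended past a finite time; let $c^*=c_{a,t}$ be the supremum of all $c$ for which a solution curve $\sigma_{a,t}:[t,t+c)\to X$ exists, and assume $c^*<\infty$. By uniqueness (Theorem \ref{Theorem-uniqueness1}) these solution curves all agree on their common domain, so they patch together to give a genuine solution $\sigma:[t,t+c^*)\to X$. The key step will be to show that $\sigma(s)$ has a limit as $s\to (t+c^*)^-$; once we have that limit point, call it $a^*$, we invoke the existence theorem (Theorem \ref{Theorem1}) at $(a^*,t+c^*)$ to produce a solution curve emanating from $a^*$ at time $t+c^*$, glue it to $\sigma$, and check (again using uniqueness and the local nature of the solution condition (\ref{Definition-solution})) that the concatenation is a solution on a strictly larger interval — contradicting maximality of $c^*$.

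To establish that $\lim_{s\to(t+c^*)^-}\sigma(s)$ exists, I would first show that $\sigma$ stays in a bounded region and is (locally) Lipschitz, hence uniformly continuous, which on the complete space forces convergence. Here is where \textbf{linear speed growth} enters. Let $x$ be the distinguished point with constants $c_1(x),c_2(x)$. I would control $R(s):=d(\sigma(s),x)$ by a differential (or rather, upper-Dini-derivative) inequality: from the solution condition, $d(\sigma(s+h),\sigma(s))\approx d(\Phi^s_h(\sigma(s)),\sigma(s))\leq h\,\rho(\sigma(s),s)$ up to $o(h)$, and $\rho(\sigma(s),s)\leq \rho(x,s;R(s)+1,l)\leq c_1(x)(R(s)+1)+c_2(x)$ by linear speed growth (using any $r>R(s)$ so that $\sigma(s)\in B(x,r)$). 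Thus $D^+R(s)\leq c_1(x)R(s)+(c_1(x)+c_2(x))$, and Gronwall gives $R(s)\leq \big(R(t)+ \tfrac{c_1+c_2}{c_1}\big)e^{c_1(x)(s-t)}-\tfrac{c_1+c_2}{c_1}$, which is bounded on the finite interval $[t,t+c^*)$. So $\sigma$ stays in a fixed ball $B(x,\bar r)$; moreover on this ball $\rho$ is bounded by $M:=c_1\bar r+c_1+c_2$, so $\sigma$ is $M$-Lipschitz on $[t,t+c^*)$, and since $\overline{B(x,\bar r)}$ may be taken complete (shrinking if needed via local completeness, or noting $\sigma$ lives in a complete closed ball), the Cauchy criterion yields the limit $a^*=\lim_{s\to(t+c^*)^-}\sigma(s)\in X$.

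The extension step then runs as follows. Apply Theorem \ref{Theorem1} at the point $a^*$ and time $t+c^*$ to get $\tau:[t+c^*, t+c^*+c')\to X$ with $\tau(t+c^*)=a^*$ solving (\ref{Definition-solution}). Define $\hat\sigma$ on $[t,t+c^*+c')$ by $\hat\sigma=\sigma$ on $[t,t+c^*)$ and $\hat\sigma=\tau$ on $[t+c^*,t+c^*+c')$; this is well-defined and continuous because $\sigma(s)\to a^*=\tau(t+c^*)$. The condition (\ref{Definition-solution}) is a one-sided pointwise (local, forward) condition, so it holds automatically at every $s<t+c^*$ (where $\hat\sigma=\sigma$) and every $s>t+c^*$ (where $\hat\sigma=\tau$); the only point needing a check is $s=t+c^*$, but there $\hat\sigma$ agrees with $\tau$ on a right-neighborhood and $\tau$ satisfies (\ref{Definition-solution}) at its initial time, so we are done. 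This contradicts the definition of $c^*$ as the supremum. Hence $c_{a,t}$ can be taken to be $\infty$.

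I expect the main obstacle to be the rigorous justification of the differential inequality for $R(s)$: the relation between $d(\sigma(s+h),\sigma(s))$ and $h\rho(\sigma(s),s)$ requires combining the definition of a solution curve with the Lipschitz-in-$h$ bound $d(\Phi^s_h(\sigma(s)),\Phi^s_0(\sigma(s)))\le h\rho(\sigma(s),s)$ from the definition of an arc field, and then passing to the upper Dini derivative and applying a Gronwall-type comparison lemma for Dini derivatives (as is already used in the proof of Theorem \ref{Theorem-uniqueness1}). A minor subtlety is that linear speed growth is stated for \emph{all} $r>0$ and $t,l>0$ with a single point $x$, so one must choose the radius $r$ in $\rho(x,s;r,l)$ larger than the current $R(s)$ and the time-window $l$ large enough to cover $[t,t+c^*]$; since $c^*<\infty$ this is harmless. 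Everything else is bookkeeping with the completeness of closed balls and the pointwise nature of (\ref{Definition-solution}).
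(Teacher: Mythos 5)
Your strategy (a priori exponential bound from linear speed growth, Lipschitz estimate, limit at the blow-up time, restart via Theorem \ref{Theorem1}, patch by uniqueness) is the classical maximal-interval argument, and most of it is sound: the Dini-derivative inequality $D^+d(x,\sigma(s))\le c_1(x)\,d(x,\sigma(s))+c_1(x)+c_2(x)$ does follow from (\ref{Definition-solution}), the Lipschitz-in-$h$ bound $d(\Phi^s_h(y),y)\le h\rho(y,s)$ and the definition of $\rho(x,s;r,l)$, and the concatenation/uniqueness bookkeeping at the end is fine. The paper itself gives no argument here (it defers to Theorem 4.4 of \cite{BC}), so the comparison has to be with the soundness of your own chain of steps.

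There is, however, a genuine gap at the pivotal step ``the Cauchy criterion yields the limit $a^*$''. The space $X$ is only \emph{locally} complete: every point has some complete neighborhood, but a closed ball $\overline{B(x,\bar r)}$ of a radius $\bar r$ that is handed to you by the Gronwall estimate need not be complete (take $X$ to be an open subset of a Banach space with the induced metric: it is locally complete, yet large closed balls are not complete and Cauchy curves can leave the space). Your two parenthetical fixes do not close this: you cannot ``shrink $\bar r$ via local completeness'', since $\bar r$ is dictated by the a priori bound and the curve must stay inside the ball you use; and ``noting $\sigma$ lives in a complete closed ball'' is exactly the assertion that needs proof. Note that everywhere else in the paper completeness is invoked only for small balls chosen around a single point (``we may also assume $\overline{B(a,r)}$ is a complete metric space'' in the proof of Theorem \ref{Theorem1}). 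So as written, the existence of $a^*$ --- and with it the whole extension step --- is unjustified. To repair the argument you must either add a hypothesis (e.g.\ closed bounded subsets of $X$ are complete), or prove that the closure of $\sigma([t,t+c^*))$ is complete, or avoid the limit point altogether in the style of the argument the paper cites: use linear speed growth together with the quantitative existence time $c=\min\{r/\rho(\cdot;r,l),l\}$ of Theorem \ref{Theorem1} to show that the existence time from $\sigma(s_0)$ stays bounded below as $s_0\uparrow t+c^*$ (this also requires controlling the radii $r_a$, $\epsilon_a$ and the local-completeness radius along the curve), and then extend from an interior point $s_0$ close to $t+c^*$ to contradict maximality directly.
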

\begin{proof}
Similar to Theorem 4.4 of \cite{BC}
\end{proof}

\section{Adding Flows}
\subsection{Sum of arc fields}
Let $\Phi$ and $\Psi$ be two arc fields satisfying Condition A,B and C. We impose a certain commutation
  law on $\Phi$ and $\Psi$.\\
{\bf Condition D:} There exist constants $0<\alpha<1$ and $C_d>0,$ and a function
$\Pi : X\times X \times [0,1] \rightarrow R$ such that for each $a\in X$ and $t\in [0,\infty)$,
there are constants $r_a>0,$ $\epsilon_a\in(0,1]$ and $T_t>t$ such that
 $\Pi $  is bounded from  above  on  $B(a,r_a)\times B(a,r_a)\times [0,\epsilon_a]$ and
\begin{equation}
d(\Phi_{2h}^{s+h}\circ\Psi_{2h}^s(b_1), \Psi_{2h}^{s+h}\circ\Phi_{2h}^s(b_2))\leq d(b_1,b_2)(1+h\Pi(b_1,b_2,h))+ C_d  h^{1+\alpha}
\end{equation}
for all $b_1,b_2\in B(a,r_a),$ $h\in[0,\epsilon_a]$ and $s,s+h \in[t,T_t]$.

\begin{rem}
Once we have fixed $a\in X$, $t\in [0,\infty)$ and fixed constants $r_a,\epsilon_a, T_t$ then the function $\Pi$ is bounded above.
We denote upper bound of $\Pi$ by $K_D$.
\end{rem}

\begin{defn}
For given $a\in X$ and $t\in[0,\infty),$ a solution curve of the
 sum of $\Phi$ and $\Psi$ with initial position $a$ at time $t$ is a map
$\sigma:[t,t+c)\rightarrow X$  such that $\sigma(t)=a$
and for each $s\in[t,t+c)$
\begin{equation}
\lim_{h\rightarrow 0}\frac{d(\sigma(s+2h),\Psi_{2h}^{s+h}\circ\Phi_{2h}^{s}(\sigma(s)))}{2h} =0
\end{equation}
\end{defn}

\begin{rem} If $\Phi$ and $\Psi$ satisfy Condition D, then we can check that
\begin{equation}
\lim_{h\rightarrow 0}\frac{d(\sigma(s+2h),\Psi_h^{s+h}\circ\Phi_h^{s}(\sigma(s)))}{2h} =
\lim_{h\rightarrow 0}\frac{d(\sigma(s+2h),\Phi_h^{s+h}\circ\Psi_h^{s}(\sigma(s)))}{2h}
\end{equation}
So, a solution curve of the  sum of $\Phi$ and $\Psi$ is a solution curve of the
 sum of $\Psi$ and $\Phi$.
\end{rem}

For a notational convenience in later computations, let us introduce new arc fields $\tilde{\Phi}$ and $\tilde{\Psi}$ which are defined by
$$\tilde{\Phi}_h^s(b):= \Phi_{2h}^s(b), \qquad  \tilde{\Psi}_h^s(b):= \Psi_{2h}^s(b)$$ for all $b\in X$ and $(h,s)\in [0,1]\times[0,\infty ).$
It is trivial to see if $\Phi$ and $\Psi$ satisfy Condition D then $\tilde{\Phi}$ and $\tilde{\Psi}$ satisfy the following condition\\
{\bf Condition D':} For all $b_1,b_2\in B(a,r_a),$ $h\in[0,\epsilon_a]$ and $s,s+h \in[t,T_t],$ we have
\begin{equation}\label{Condition:sum}
d(\tilde{\Phi}_h^{s+h}\circ\tilde{\Psi}_h^s(b_1), \tilde{\Psi}_h^{s+h}\circ \tilde{\Phi}_h^s(b_2))\leq d(b_1,b_2)(1+h {\Pi}(b_1,b_2,h))+ C_d  h^{1+\alpha}
\end{equation}
with same constants $r_a,$ $\epsilon_a,$ $T_t,$ $\alpha,$ $C_d$ and a function $\Pi$ as in Condition D.

It is also trivial to see that $\sigma:[t,t+c)\rightarrow X$ is a solution curve of the sum of $\Psi$ and $\Psi$ if and only if $\sigma$ satisfies
\begin{equation}\label{Eq-definition-sum}
\lim_{h\rightarrow 0}\frac{d(\sigma(s+2h),\tilde{\Psi}_h^{s+h}\circ \tilde{\Phi}_h^{s}(\sigma(s)))}{2h} =0
\end{equation}
for all $s\in[t,t+c).$

\begin{lem}\label{One-step:sum}
 For a  given $a\in X$ and $t\in[0,\infty)$, let arc fields $\Phi$ and $\Psi$ satisfy Condition A,B,C and D', and $r_a$, $\epsilon_a$ and $T_t$ be
 constants in those conditions. If $b_1,b_2\in B(a,r_a)$,
$4h\in[0,\epsilon_a]$ and $s,s+4h\in [t,T_t]$  then we have
$$d(\Psi_{h}^{s+3h}\circ\Phi_{h}^{s+2h}\circ\Psi_{h}^{s+h}\circ\Phi_{h}^{s}(b_1),
\Psi_{2h}^{s+2h}\circ\Phi_{2h}^{s}(b_2)) \leq d(b_1,b_2)(1+hK)^3 + C(h)$$
where  $K:= \max\{K_A,K_B,K_D \}$ and $C(h):=C_d h^{1+\alpha}(1+hK)+h\tilde{g}(h,h)[1+(1+hK)^2]$.
\end{lem}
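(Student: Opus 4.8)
The plan is to deform the four-step alternating path on the left into the two-step path on the right by a telescoping chain of triangle inequalities: consecutive steps of the same arc field are fused into a single doubled step by the quasi-semigroup estimate (\ref{Condition-D}), the two middle (opposite-field) steps are interchanged by the commutation law of Condition D$'$, and every resulting error, as well as the change of base point $b_1\mapsto b_2$, is transported through the remaining steps by Condition A at a multiplicative cost $1+hK$ each. The one point that must be chosen correctly is \emph{when} the base point is changed: it has to be done together with the interchange of the middle pair, so that it is charged only the factor $1+h\Pi\le 1+hK$ produced by Condition D$'$ plus the two transport factors $1+hK_A$ coming from pushing through the first step $\Phi_h^s$ and the last step $\Psi_h^{s+3h}$. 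This is exactly what produces the exponent $3$ rather than $4$.

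Concretely, put $v_i:=\Phi_h^s(b_i)$, so that Condition A for $\Phi$ gives $d(v_1,v_2)\le(1+hK_A)d(b_1,b_2)$. First, interchange the middle pair while changing the base point: Condition D$'$ applied to $\Phi_h^{s+2h}\circ\Psi_h^{s+h}$ at $v_1$ versus $\Psi_h^{s+2h}\circ\Phi_h^{s+h}$ at $v_2$ (reconciling the time indices with those of Condition D$'$ by a harmless shift of size $h/2$ handled through Condition C, which costs only an extra $O(h^{1+\alpha})$) gives
\[
d\bigl(\Phi_h^{s+2h}\circ\Psi_h^{s+h}(v_1),\,\Psi_h^{s+2h}\circ\Phi_h^{s+h}(v_2)\bigr)\le (1+hK)^2\,d(b_1,b_2)+C_dh^{1+\alpha}+O(h^{1+\alpha}),
\]
where $(1+hK)^2$ absorbs $(1+hK_A)(1+h\Pi)$; transporting this through $\Psi_h^{s+3h}$ by Condition A for $\Psi$ multiplies the right-hand side by $1+hK_A$, yielding the main term $d(b_1,b_2)(1+hK)^3$ and the term $C_dh^{1+\alpha}(1+hK)$. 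Next, fuse the two now-adjacent $\Phi$-steps: by (\ref{Condition-D}) with $l=h$, replacing $\Phi_h^{s+h}\circ\Phi_h^s(b_2)$ by $\Phi_{2h}^s(b_2)$ costs $h\tilde{g}(h,h)$, and transporting this error through the two remaining $\Psi$-steps $\Psi_h^{s+2h}$ and $\Psi_h^{s+3h}$ multiplies it by $(1+hK_A)^2$, contributing $h\tilde{g}(h,h)(1+hK)^2$. Finally, fuse the two $\Psi$-steps: again by (\ref{Condition-D}) with $l=h$, replacing $\Psi_h^{s+3h}\circ\Psi_h^{s+2h}$ by $\Psi_{2h}^{s+2h}$ costs $h\tilde{g}(h,h)$ with no further transport. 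Adding the three contributions and bounding $K_A,K_B,K_D$ and $\Pi$ by $K$ gives precisely $d(b_1,b_2)(1+hK)^3+C(h)$ with $C(h)=C_dh^{1+\alpha}(1+hK)+h\tilde{g}(h,h)[1+(1+hK)^2]$.

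The main obstacle is exactly this bookkeeping: arranging the order of the moves so that the main term passes through only three $(1+hK)$-factors (first step, commutation, last step) while the larger number of factors is spent on the lower-order $\tilde{g}$- and $h^{1+\alpha}$-errors, and in particular so that it is the $\Phi$-fusion error, not the main term, that is pushed through two $\Psi$-steps. A minor technical nuisance is that Condition D and Condition D$'$ are written for steps of size $2h$ whose base times differ by $h$, whereas the two middle steps here have size $h$ and times differing by $h$; bridging this gap requires sliding one time label by $h/2$ via Condition C, with an $O(h^{1+\alpha})$ cost that is absorbed into the $C_dh^{1+\alpha}$ and $h\tilde{g}(h,h)$ terms (recall $h\tilde{g}(h,h)\ge Ch^{1+\alpha}$ since $\tilde{g}(l,h)=g(l,h)K_B+Cl^\alpha$). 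Throughout one must also know that the intermediate points produced along the chain stay in $B(a,r_a)$ so that Conditions A, B, C and D$'$ apply; this is what the hypotheses $b_1,b_2\in B(a,r_a)$, $4h\in[0,\epsilon_a]$, $s,s+4h\in[t,T_t]$ are for, after shrinking $r_a$ if necessary as in the proof of Theorem \ref{Theorem1}.
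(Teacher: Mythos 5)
Your proof is correct and follows essentially the same chain as the paper's: Condition A on the first $\Phi$-step, the commutation estimate on the middle pair (the paper's $d_2$), Condition A once more for the outer $\Psi$-step, and then the two fusions via (\ref{Condition-D}), the last two of which the paper packages as a single application of Lemma \ref{One-step}. The only deviation is your ``$h/2$ time-reconciliation via Condition C'': it is unnecessary, because the hypothesis that $\Phi,\Psi$ themselves satisfy Condition D' is exactly the interchange inequality for steps of size $h$ whose base times differ by $h$ (apply it at base time $s+h$), and dropping it also removes the extra $O(h^{1+\alpha})$ term, which strictly speaking could not be absorbed into the stated $C(h)$ without enlarging its constants.
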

\begin{proof}
\begin{figure}[h]
\centering\centerline{
\includegraphics[width=17cm]{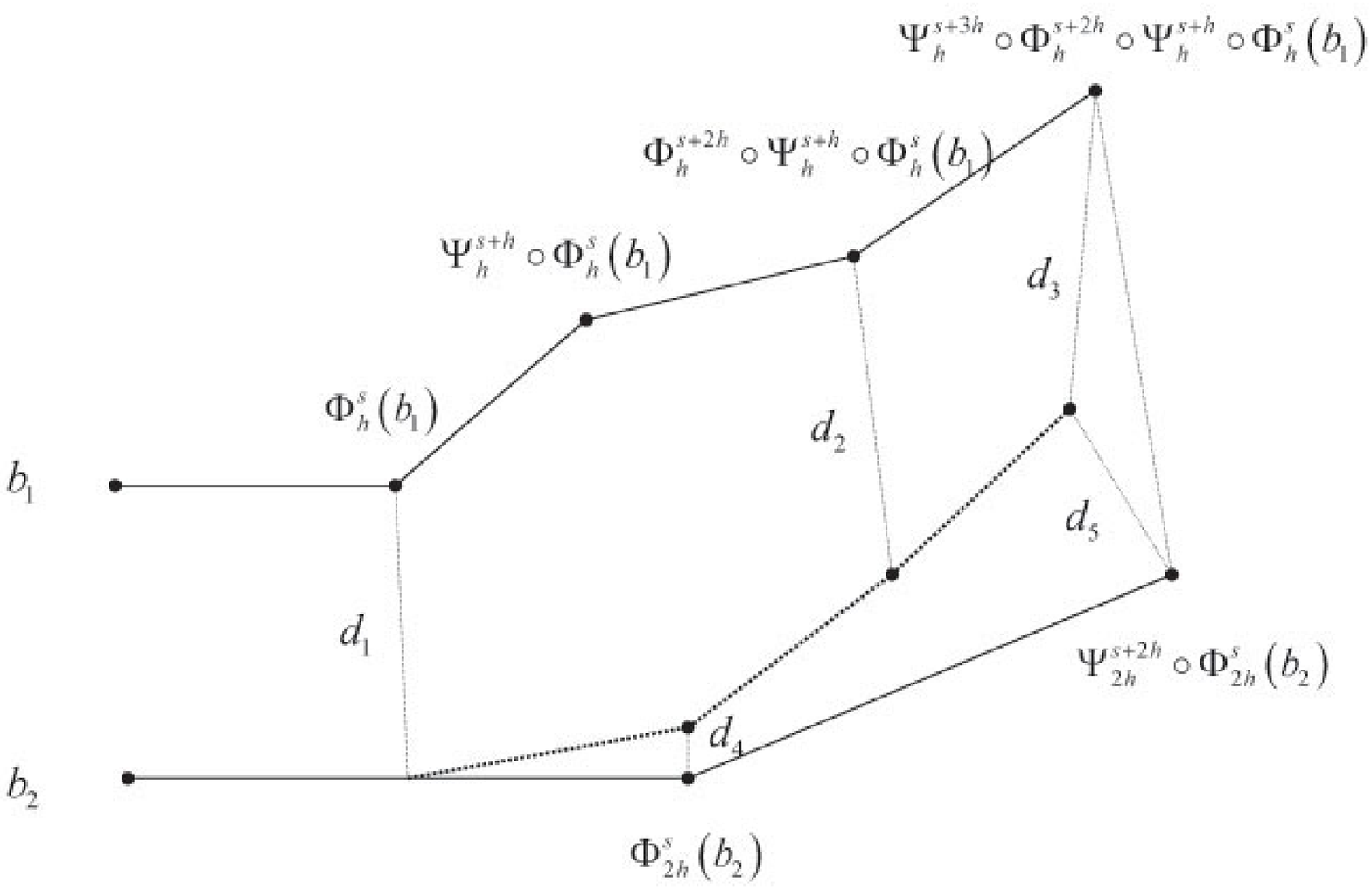}}
    \caption{
    }
\label{fig3_overload}
\end{figure}

1. We use Condition A to get,
\begin{align*}
d_1:= d(\Phi_h^s(b_1), \Phi_h^s(b_2))\leq d(b_1,b_2)(1+hK_A)
\end{align*}
2. Condition  A and D give,
\begin{align}\nonumber\label{Eq2:sum}
d_2:&= d(\Phi_h^{s+2h}\circ\Psi_h^{s+h}(\Phi_h^s(b_1)),\Psi_h^{s+2h} \circ  \Phi_h^{s+h}(\Phi_h^s(b_2)))\\\nonumber
&\leq d(\Phi_h^s(b_1),\Phi_h^s(b_2))(1+hK_D)+C_d h^{1+\alpha} \\
&\leq d(b_1,b_2)(1+hK_A)(1+hK_D)+C_d h^{1+\alpha}
\end{align}
3. We combine Condition A and (\ref{Eq2:sum}) to get
\begin{align}\label{Eq12:sum}\nonumber
d_3:&= d(\Psi_h^{s+3h}(\Phi_h^{s+2h}\circ\Psi_h^{s+h}\circ\Phi_h^s(b_1)), \Psi_h^{s+3h}(\Psi_h^{s+2h}\circ \Phi_h^{s+h}\circ\Phi_h^s(b_2)))\\\nonumber
&\leq d(\Phi_h^{s+2h}\circ\Psi_h^{s+h}\circ\Phi_h^s(b_1),\Psi_h^{s+2h}\circ \Phi_h^{s+h}\circ\Phi_h^s(b_2)))(1+hK_A) \\
&\leq [d(b_1,b_2)(1+hK_A)(1+hK_D)+C_d h^{1+\alpha}](1+hK_A)
\end{align}
4. Equation (\ref{Condition-D}) gives,
\begin{align}\label{Eq3:sum}
d_4:= d(\Phi_{2h}^{s}(b_2),\Phi_h^{s+h}\circ\Phi_h^s(b_2)))\leq h\tilde{g}(h,h)
\end{align}
5. We exploit Lemma \ref{One-step} to have
\begin{align*}
d(\Psi_h^{s+3h}\circ\Psi_h^{s+2h}(\Phi_h^{s+h}\circ\Phi_h^s(b_2))),\Psi_{2h}^{s+2h}(\Phi_{2h}^{s}(b_2)))
 \leq d(\Phi_h^{s+h}\circ\Phi_h^s(b_2)),\Phi_{2h}^{s}(b_2))(1+hK_A)^2+ h\tilde{g}(h,h)
\end{align*}
and this together with (\ref{Eq3:sum}) gives
\begin{align}\label{Eq13:sum}\nonumber
d_5:&= d(\Psi_h^{s+3h}\circ\Psi_h^{s+2h}(\Phi_h^{s+h}\circ\Phi_h^s(b_2))),\Psi_{2h}^{s+2h}(\Phi_{2h}^{s}(b_2)))\\
 &\leq h\tilde{g}(h,h)(1+hK_A)^2+ h\tilde{g}(h,h)
\end{align}
Finally, triangle inequality with (\ref{Eq12:sum}) and (\ref{Eq13:sum}) gives
\begin{align*}
& d(\Psi_{h}^{s+3h}\circ\Phi_{h}^{s+2h}\circ\Psi_{2h}^{s+h}\circ\Phi_{h}^{s}(b_1),
\Psi_{2h}^{s+2h}\circ\Phi_{2h}^{s}(b_2)) \\
&\leq d_3+d_5\\
 &\leq d(b_1,b_2)(1+hK_A)^2(1+hK_D) + C_d h^{1+\alpha}(1+hK_A)
+h\tilde{g}(h,h)[1+(1+hK_A)^2]
\end{align*}
\end{proof}

\begin{rem}\label{Remark:one-step:sum}
By exactly the  same argument as in Lemma \ref{One-step:sum}, we can show a  more general formula:
Let $h_1:=l_1+r$ and $h_2:= r+ l_2$, then we have
\begin{align}\label{Eq1:remark:one-step:sum}\nonumber
&d(\Psi_{l_2}^{s+h_1+r}\circ\Phi_{r}^{s+h_1}\circ\Psi_{r}^{s+l_1}\circ\Phi_{l_1}^s(b_1) , \Psi_{h_2}^{s+h_1}\circ\Phi_{h_1}^s(b_2))\\
&\qquad \quad \leq d(b_1,b_2)(1+l_1K_A)(1+rK_D)(1+l_2K_A) + C(l_1,r, l_2)
\end{align}
where $C(l_1,r,l_2):=r\tilde{g}(l_1,r)(1+l_2K_A)(1+rK_A) +l_2\tilde{g}(r,l_2)+ C_d r^{1+\alpha}(1+l_2K_A)$.

If we define $K:=\max \{K_A,K_D \}$ and $u:= \max \{l_1,r,l_2 \}$ then (\ref{Eq1:remark:one-step:sum}) can be simplified as
\begin{align}\label{Eq2:remark:one-step:sum}
d(\Psi_l^{s+h+r}\circ\Phi_{r}^{s+h}\circ\Psi_{r}^{s+l}\circ\Phi_l^s(b_1) , \Psi_h^{s+h}\circ\Phi_h^s(b_2)) \leq d(b_1,b_2)(1+uK)^3 + C(u)
\end{align}
where $C(u):= C_d u^{1+\alpha}(1+uK) + u\tilde{g}(u,u)[1 + (1+uK)^2]$.
\end{rem}


\subsection{Existence and Uniqueness of a solution for the sum of two arc fields}
\begin{thm}(Existence)\label{Theorem:existence:sum}
Let $\Phi,\Psi$ be arc fields satisfying Condition A,B,C and D. Then, for given $a\in X$ and $t\in[0,\infty)$, there is a solution curve $\sigma:[t,t+c)\rightarrow X$
of the sum of $\Phi $ and $\Psi $  with initial position $a$ at time $t.$
\end{thm}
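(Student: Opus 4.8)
The plan is to mimic the proof of Theorem \ref{Theorem1}, but using the ``doubled'' arc fields $\tilde{\Phi},\tilde{\Psi}$ and the composition $\tilde{\Psi}_h^{s+h}\circ\tilde{\Phi}_h^s$ as the basic one-step map, with Lemma \ref{One-step:sum} (and its generalization in Remark \ref{Remark:one-step:sum}) playing the role that Lemma \ref{One-step} played before. Concretely, for a given $a\in X$ and $t\in[0,\infty)$ I would fix the constants $r_a,\epsilon_a,T_t$ valid for all four Conditions A, B, C, D (intersecting the finitely many neighborhoods), pick $r,l$ with $\rho(a,t;r,l)<\infty$ for both $\Phi$ and $\Psi$, and set $c:=\min\{r/(2\rho),l\}$ or similar so that all iterates stay in $B(a,r)$; then define the $n$-th discretized solution $\xi_n$ on $[0,c)$ by composing blocks of the form $\tilde{\Psi}_{2^{-n}}^{t+(2i+1)2^{-n}}\circ\tilde{\Phi}_{2^{-n}}^{t+2i2^{-n}}$, i.e. on the interval $[2i2^{-n},(2i+2)2^{-n}]$ apply first $\Phi$ and then $\Psi$ each for parameter-length $2^{-n}$, interpolating linearly inside each half-step as in Lemma \ref{Check-solution1}.

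The heart of the argument is the Cauchy estimate $\sup_s d(\xi_n(s),\xi_{n-1}(s))\to 0$. Here I would compare one step of $\xi_{n-1}$ (a $\tilde{\Psi}\tilde{\Phi}$ block of size $2^{-(n-1)}$, i.e. $\Psi$ then $\Phi$ each of length $2^{1-n}$) with two steps of $\xi_n$ (the four-fold composition $\Psi_{2^{-n}}\Phi_{2^{-n}}\Psi_{2^{-n}}\Phi_{2^{-n}}$ with the appropriate shifted times), which is exactly the situation of Lemma \ref{One-step:sum}: it gives, with $b_1=b_2$ being the common value at the start of the block, a one-step discrepancy bounded by $C(2^{-n})=C_d 2^{-n(1+\alpha)}(1+2^{-n}K)+2^{-n}\tilde g(2^{-n},2^{-n})[1+(1+2^{-n}K)^2]$. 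Iterating over the $\sim c2^n$ blocks with the contraction-type factor $(1+2^{-n}K)^3$ and summing the geometric series, one gets (just as in \eqref{Eq1:Theorem:existence}) $d(\xi_n(i2^{1-n}),\xi_{n-1}(i2^{1-n}))\le K' \cdot 2^n C(2^{-n})$ for a constant $K'$ depending only on $c$ and $K$; since $2^n C(2^{-n})=C_d 2^{-n\alpha}(1+o(1))+\tilde g(2^{-n},2^{-n})[\cdots]$ and $\sum_i \tilde g(2^{-i},2^{-i})<\infty$ while $\sum_i 2^{-i\alpha}<\infty$, adding the equi-Lipschitz interpolation error $O(2^{-n})$ as in \eqref{DiscriteOnestep-difference} yields a summable bound on $d(\xi_n,\xi_{n+1})$, hence $\{\xi_n\}$ is uniformly Cauchy and converges in $\overline{B(a,r)}$ to some $\tilde\sigma$.

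The remaining step is to verify that $\tilde\sigma$ satisfies the solution identity \eqref{Eq-definition-sum}, i.e. $d(\tilde\sigma(s+2h),\tilde\Psi_h^{t+s+h}\circ\tilde\Phi_h^{t+s}(\tilde\sigma(s)))=o(h)$. This parallels the end of the proof of Theorem \ref{Theorem1}: fix $\epsilon>0$, use uniform convergence to replace $\tilde\sigma$ by $\xi_n$ (absorbing the substitution errors via Condition A applied to $\tilde\Psi\circ\tilde\Phi$, which is Lipschitz with a factor $(1+O(h)K_A)^2$), then estimate $d(\xi_n(s+2h),\tilde\Psi_h^{t+s+h}\circ\tilde\Phi_h^{t+s}(\xi_n(s)))$ by inserting the nearest grid point $i2^{-n}\le s<(i+1)2^{-n}$. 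The three resulting terms are controlled by: the Lipschitz property of $\xi_n$ (error $O(2^{-n})$), the analogue of Lemma \ref{Check-solution1} applied to the polygonal path built from alternating $\Phi$- and $\Psi$-steps — here I need a ``Lemma \ref{Lemma1}-for-the-sum'' estimate obtained by combining Lemma \ref{Lemma1} for $\Phi$ and $\Psi$ with Condition D' to slide a $\tilde\Psi\tilde\Phi$-block past a long $\tilde\Psi_h\tilde\Phi_h$ step, whose error $\eta\to 0$ as the mesh $\to 0$ — and a Lemma \ref{New}-type term $O(2^{-n}|s-i2^{-n}|^\alpha)$ for the time-shift. Choosing $n$ large for fixed $\epsilon$ and then letting $h\to0$ kills everything, giving \eqref{Eq-definition-sum}; finally set $\sigma(s):=\tilde\sigma(s-t)$.

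I expect the main obstacle to be the bookkeeping in the ``sum version of Lemma \ref{Lemma1}'' needed for the solution-check step: unlike Lemma \ref{One-step:sum}, which compares a $4$-fold block of size $h$ against a block of size $2h$ with matched endpoints, here I must compare one short $\tilde\Psi_h^{s+h}\circ\tilde\Phi_h^{s}$ block applied at $\xi_n(i2^{-n})$ with the long polygonal path that $\xi_n$ traces from $\xi_n(i2^{-n})$ to $\xi_n(i2^{-n}+2h)$, where $2h$ need not be a grid length; decomposing that long path, commuting the short block past each elementary step using Condition D' (picking up error $C_d(2^{-n})^{1+\alpha}$ per step, summing to $O(2^{-n\alpha}\cdot 2h)$, hence $o(h)$ after dividing by $h$ and sending $n\to\infty$), and handling the time-argument mismatches with Conditions B and C, is where the estimates are most delicate. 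Everything else is a routine adaptation of the time-dependent single-arc-field proof.
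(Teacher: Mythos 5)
Your setup and your Cauchy estimate follow the paper's proof essentially verbatim: the alternating $\Phi/\Psi$ discretization $\xi_n$, the comparison of a four-step block of $\xi_n$ against a two-step block of $\xi_{n-1}$ via Lemma \ref{One-step:sum}, the geometric summation with factor $(1+2^{-n}K)^3$, and the summability of $C_d2^{-n\alpha}+\tilde g(2^{-n},2^{-n})$ are all exactly as in the paper, and that part of your argument is sound.

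The gap is in the last step, where you verify that the limit satisfies (\ref{Eq-definition-sum}). You propose to compare $\tilde\Psi_h^{s'+h}\circ\tilde\Phi_h^{s'}(\xi_n(s'))$ with the polygonal path of $\xi_n$ on $[s',s'+2h]$ by ``commuting the short block past each elementary step using Condition D', picking up error $C_d(2^{-n})^{1+\alpha}$ per step, summing to $O(2^{-n\alpha}\cdot 2h)$.'' That accounting does not close. Condition D (equivalently D') only commutes blocks of \emph{equal} parameter length; it gives no estimate for sliding a block of length $h$ past an elementary step of length $2^{-n}$. If you instead decompose the long block into $k=h2^n$ elementary blocks and commute pairwise, rearranging $(\Psi_u\Phi_u)^k$ into $\Psi_{ku}\Phi_{ku}$ costs on the order of $k^2$ elementary transpositions, each of size $C_du^{1+\alpha}$ with $u=2^{-n}$, for a total of order $k^2u^{1+\alpha}=h^2(2^{-n})^{\alpha-1}$, which diverges as $n\to\infty$ for fixed $h$ because $\alpha<1$. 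This is exactly the obstruction that the paper's Lemma \ref{Discretization:sum} is built to avoid: it compares the two-step splitting $\Psi_h^{s+h}\circ\Phi_h^s$ with the $2^{m+1}$-step splitting by \emph{dyadic refinement}, doubling the number of blocks at each level and applying Lemma \ref{One-step:sum} block by block, so that the level-$l$ errors involve $C(h/2^l)$ with multiplicity growing only geometrically and sum to $Ch\bigl[h^\alpha+\sum_i\tilde g(h/2^i,h/2^i)\bigr]$, which is $o(h)$ as $h\to0$ by (\ref{Condition-B2}). Together with Lemma \ref{Check-solution:sum} and Remark \ref{Remark:Discretization:sum} to pass from dyadic to general $h$, this hierarchical comparison is the key idea of the second half of the proof; your linear ``slide past each step'' scheme is not a delicate bookkeeping issue but a step that fails, and it must be replaced by the cascade argument.
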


\begin{proof}
Without loss of generality, we assume $\Phi$ and $\Psi$ satisfy Condition A,B,C and D', and find a curve $\sigma$ satisfying (\ref{Eq-definition-sum}).

For a positive integer n, we define the n-th discretized solution by
\begin{align*}
\xi_n(s):=
\left \{ \begin{matrix} \Phi_s^t(a) & 0\leq s\leq \frac{1}{2^n}\\
\\
\Psi_{s- 2^{-n}}^{t+ 2^{-n}}(\xi_n(\frac{1}{2^n})) & \frac{1}{2^n}\leq s \leq \frac{2}{2^n}\\
.\\
.\\
.\\
\Phi_{s-i\cdot 2^{-n}}^{t+i\cdot 2^{-n}}(\xi_n(\frac{i}{2^n})) & \frac{i}{2^n}\leq s\leq \frac{i+1}{2^n}\quad {\rm if} \quad i=2m\\
\\
\Psi_{s-i\cdot 2^{-n}}^{t+i\cdot 2^{-n}}(\xi_n(\frac{i}{2^n})) & \frac{i}{2^n}\leq s\leq \frac{i+1}{2^n}\quad {\rm if} \quad i=2m+1\\
.\\
.
\end{matrix} \right.
\end{align*}
\\
Suppose  $r,l>0$ are chosen so that $\rho(a,t;r,l)<\infty$. If $\rho(a,t;r,l)=0,$ then $\sigma(s):=a$ defines a solution curve. Thus we assume
$\rho(a,t;r,l)>0,$ and let
\begin{equation}
c:= \min\left \{\frac{r}{\rho(a,t;r,l)},l \right \}
\end{equation}
It is easy to see that we have $\xi_n(s)\in B(a,r)$ for $0\leq s< c$. This also implies $\{\xi_n\}_{n=1}^\infty$ is equi-Lipschitz with Lipschitz constant $\rho(a,t;r,l).$
Moreover, by choosing $r$ smaller if necessary, we may assume
there are constants $K_A$, $K_B$ and $\epsilon\in(0,1]$ such that $\Lambda(p,q,h)\leq K_A$ and $\Omega(p,l',h)\leq K_B$ for all
$p,q\in B(a,r)$ and $l,h'\in [0,\epsilon]$. We may also assume $\overline{B(a,r)}$ is a complete metric space.
\begin{figure}[h]
\centering\centerline{
\includegraphics[width=17cm]{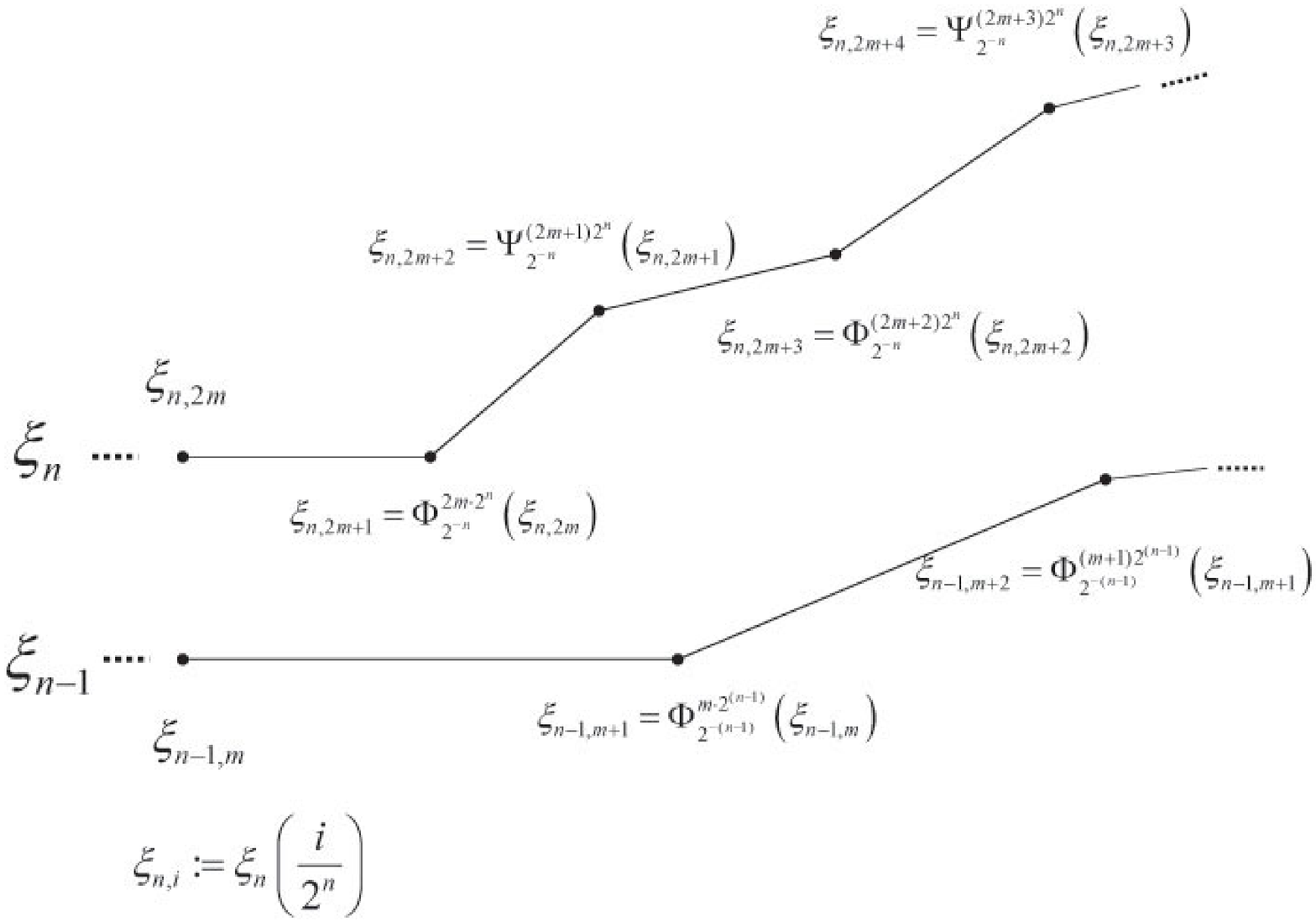}}
    \caption{
    }
\label{fig4_overload}
\end{figure}

Let us first estimate the uniform distance between $\xi_n$ and $\xi_{n-1}$.
We apply  Lemma \ref{One-step:sum} with $h:=1/2^n$, $s:=t + \frac{4i}{2^{n}}$, $b_1:=\xi_n(\frac{4i}{2^{n}})$
 and $b_2:=\xi_{n-1}(\frac{4i}{2^{n}})$  to have
\begin{align*}
d\bigl(\xi_n\bigl(\frac{4(i+1)}{2^{n}}\bigr),\xi_{n-1}\bigl(\frac{4(i+1)}{2^{n}}\bigr)\bigr)&= d\bigl(\Psi_{h}^{s+3h}\circ\Phi_{h}^{s+2h}\circ\Psi_{h}^{s+h}\circ\Phi_{h}^{s}
(\xi_n(\frac{4i}{2^{n}})),
\Psi_{2h}^{s+2h}\circ\Phi_{2h}^{s}(\xi_{n-1}(\frac{4i}{2^{n}}))\bigr)                   \\
& \leq d\bigl(\xi_n(\frac{4i}{2^{n}}),\xi_{n-1}(\frac{4i}{2^{n}})\bigr)\bigl(1+\frac{K}{2^n}\bigr)^3+C\bigl(\frac{1}{2^n}\bigr)
\end{align*}
where $K:=\max\{K_A,K_D \}$ and $ C(h)=C_d h^{1+\alpha}(1+hK) + h\tilde{g}(h,h)[1+(1+hK)^2] $.

In general, we have
\begin{align*}
d\bigl(\xi_n(\frac{4(i+1)}{2^{n}}),\xi_{n-1}(\frac{4(i+1)}{2^{n}})\bigr)& \leq d \bigl(\xi_n(\frac{4i}{2^{n}}),\xi_{n-1}(\frac{4i}{2^{n}})\bigr)
\bigl(1+\frac{K}{2^n}\bigr)^3+C\bigl(\frac{1}{2^n}\bigr)\\
& \leq d \bigl(\xi_n(\frac{4(i-1)}{2^{n}}),\xi_{n-1}(\frac{4(i-1)}{2^{n}})\bigr)\bigl(1+\frac{K}{2^n}\bigr)^{3\cdot 2} + C\bigl(\frac{1}{2^n}\bigr)
\bigl[1+(1+\frac{K}{2^n})^3\bigr]\\
&...\\
& \leq d(\xi_n(0),\xi_{n-1}(0))\bigl(1+\frac{K}{2^n}\bigr)^{3(i+1)} + C\bigl(\frac{1}{2^n}\bigr)
\bigl[1+\bigl(1+\frac{K}{2^n}\bigr)^3 + \cdots +\bigl(1+\frac{K}{2^n}\bigr)^{3i}\bigr]
\end{align*}
Since $\xi_n(0)=\xi_{n-1}(0) $, for all $i$ such that $4(i+1)/2^n\leq c$, we have
\begin{align}\label{Eq5:sum}\nonumber
d\bigl(\xi_n(\frac{4(i+1)}{2^{n}}),\xi_{n-1}(\frac{4(i+1)}{2^{n}})\bigr)
& \leq C(2^{-n}) \frac{(1+2^{-n}K)^{3i}-1}{(1+2^{-n}K)^3-1}\\\nonumber
& \leq C(2^{-n})\frac{e^{\frac{3c}{4}K}-1}{2^{-n}K[(1+2^{-n}K)^2 + (1+2^{-n}K)+1]}\\
& \leq \frac{C(2^{-n})}{2^{-n}}\frac{e^{\frac{3c}{4}K}-1}{3K}
\end{align}
Notice, for sufficiently small $h$
\begin{equation}\label{Property:C}
C(h)=C_d h^{1+\alpha}(1+hK) + h\tilde{g}(h,h)[1+(1+hK)^2] \leq 3h[C_d h^\alpha + \tilde{g}(h,h)]
\end{equation}
We assume $n$ is large enough in (\ref{Eq5:sum}). We combine that with (\ref{Property:C}) to show
\begin{align}\label{Eq4:sum}
d\bigl(\xi_n(\frac{4(i+1)}{2^{n}}),\xi_{n-1}(\frac{4(i+1)}{2^{n}})\bigr)
 \leq \bigl[C_d (\frac{1}{2^n})^\alpha + \tilde{g}(\frac{1}{2^n},\frac{1}{2^n})\bigr]A
\end{align}
where $A:= \frac{e^{\frac{3c}{4}K}-1}{K}$. We notice $A$ is independent of $n$.

So for any $s\in[0,c)$, let $i$ be an integer such that
$$\frac{4i}{2^n} \leq s < \frac{4(i+1)}{2^n} $$
then we have
\begin{align}\label{Eq14:sum}\nonumber
d(\xi_n(s),\xi_{n-1}(s))&\leq d\bigl(\xi_n(s),\xi_n(\frac{4i}{2^n})\bigr) + d\bigl(\xi_n(\frac{4i}{2^n}), \xi_{n-1}(\frac{4i}{2^n})\bigr)
+ d\bigl(\xi_{n-1}(\frac{4i}{2^n}),\xi_{n-1}(s)\bigr)\\
& \leq \frac{8}{2^n}\rho(a,t;r,l) + \bigl[C_d \bigl(\frac{1}{2^n}\bigr)^\alpha+ \tilde{g}\bigl(\frac{1}{2^n},\frac{1}{2^n}\bigr)\bigr]A
\end{align}
where we use the equi-Lipschitz property of $\xi_n$ and (\ref{Eq4:sum}).

Next we exploit (\ref{Eq14:sum}) to show $\xi_n$ is a Cauchy sequence in the uniform topology. For any $s\in [0,c)$, we have
\begin{align}\nonumber
d(\xi_n(s),\xi_{n+m}(s)) & \leq \sum_{j=0}^{m-1}  d(\xi_{n+j}(s),\xi_{n+j+1}(s))      \\\nonumber
& \leq \sum_{j=0}^{m-1}\bigl[\frac{8}{2^{n+j+1}}\rho(a,t;r,l) + \{C_d \bigl(\frac{1}{2^{n+j+1}}\bigr)^\alpha+
\tilde{g}\bigl(\frac{1}{2^{n+j+1}},\frac{1}{2^{n+j+1}}\bigr)\}A \bigr]\\\nonumber
&\leq \frac{8}{2^{n+1}}\rho(a,t;r,l)  + A\sum_{j=n+1}^{\infty}\bigl[C_d\bigl(\frac{1}{2^{n+j+1}}\bigr)^\alpha+
\tilde{g}\bigl(\frac{1}{2^{n+j+1}},\frac{1}{2^{n+j+1}}\bigr)\bigr]\\\nonumber
& \rightarrow 0 \quad {\rm as} \quad n\rightarrow 0
%
\end{align}
\\
Since $\xi_n(s)$ is in the complete space $\overline{B(a,r)}$, we know that $\xi_n$ converges uniformly. We
 define $\tilde{\sigma}:[0,c)\rightarrow X$ by the limit i.e
$$\tilde{\sigma}(s)=\lim_{n\rightarrow \infty}\xi_n(s) $$

Next, we are going to show that
\begin{equation}\label{Eq4:Theorem:existence:sum}
\lim_{h\rightarrow 0}\frac{d(\tilde{\sigma}(s+2h),\Psi_h^{t+s+h}\circ\Phi_h^{t+s}(\tilde{\sigma}(s)))}{2h}=0
\end{equation}
for all $s\in[0,c)$. Once we have (\ref{Eq4:Theorem:existence:sum}), we are done with the proof by defining a solution curve $\sigma:[t,t+c)\rightarrow X$ as
$$\sigma(s):=\tilde{\sigma}(s-t), \qquad {\rm for} \quad s\in[t,t+c) $$
To prove (\ref{Eq4:Theorem:existence:sum}), we choose an arbitrary $s\in[0,c)$. Let $\epsilon>0$ and $h>0$ be fixed such that $s+h<c$. From triangle inequality, we have

\begin{align}\label{Eq6:sum}\nonumber
\frac{d(\tilde{\sigma}(s+2h),\Psi_h^{t+s+h}\circ\Phi_h^{t+s}(\tilde{\sigma}(s)))}{h} & \leq \frac{d(\tilde{\sigma}(s+2h),\xi_n(s+2h))}{h}
 + \frac{d(\xi_n(s+2h),\Psi_h^{t+s+h}\circ\Phi_h^{t+s}(\xi_n(s)))}{h}\\
  & \qquad + \frac{d(\Psi_h^{t+s+h}\circ\Phi_h^{t+s}(\xi_n(s)),\Psi_h^{t+s+h}\circ\Phi_h^{t+s}(\tilde{\sigma}(s)))}{h}
\end{align}
Since $\xi_n$ converges uniformly, we can choose $n$ large enough so that

\begin{align}\label{Eq15:sum}
\frac{d(\tilde{\sigma}(s+2h),\xi_n(s+2h))}{h}+ \frac{d(\Psi_h^{t+s+h}\circ\Phi_h^{t+s}(\xi_n(s)),\Psi_h^{t+s+h}\circ\Phi_h^{t+s}(\tilde{\sigma}(s)))}{h}
  \leq \frac{\epsilon}{2}
\end{align}
We combine (\ref{Eq6:sum}) and (\ref{Eq15:sum}) to get
\begin{align}\label{Eq16:sum}
\frac{d(\tilde{\sigma}(s+2h),\Psi_h^{t+s+h}\circ\Phi_h^{t+s}(\tilde{\sigma}(s)))}{h} \leq  \frac{d(\xi_n(s+2h),\Psi_h^{t+s+h}\circ\Phi_h^{t+s}(\xi_n(s)))}{h}
+\frac{\epsilon}{2}
\end{align}
We need to estimate the second term of (\ref{Eq16:sum}).
\begin{figure}[h]
\centering\centerline{
\includegraphics[width=17cm]{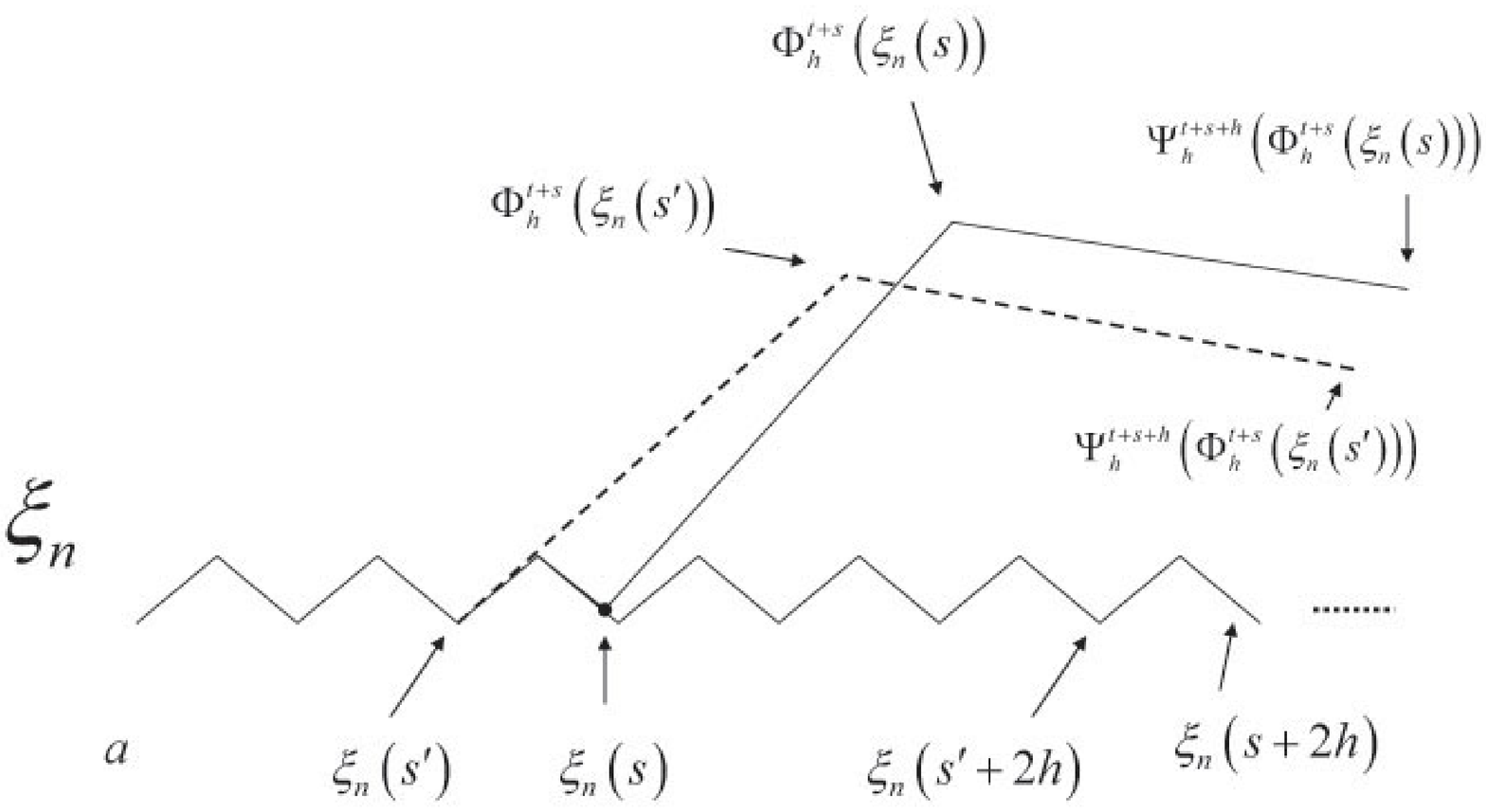}}
    \caption{
    }
\label{fig7_overload}
\end{figure}
%
Let $l$ be a nonnegative integer such that
$$\frac{2l}{2^n}\leq s < \frac{2(l+1)}{2^n}$$
and define $s':= \frac{2l}{2^n}$, then
\begin{align}\label{Eq18:sum}\nonumber
d(\xi_n(s+2h),\Psi_h^{t+s+h}\circ\Phi_h^{t+s}(\xi_n(s))) &\leq d(\xi_n(s+2h),\xi_n(s'+2h)) +
d(\xi_n(s'+2h),\Psi_h^{t+s'+h}\circ\Phi_h^{t+s'}(\xi_n(s')))\\\nonumber
 & \qquad + d(\Psi_h^{t+s'+h}\circ\Phi_h^{t+s'}(\xi_n(s')),\Psi_h^{t+s'+h}\circ\Phi_h^{t+s'}(\xi_n(s)))\\\nonumber
 & \qquad + d(\Psi_h^{t+s'+h}\circ\Phi_h^{t+s'}(\xi_n(s)),\Psi_h^{t+s'+h}\circ\Phi_h^{t+s}(\xi_n(s)))\\\nonumber
 & \qquad + d(\Psi_h^{t+s'+h}\circ\Phi_h^{t+s}(\xi_n(s)),\Psi_h^{t+s+h}\circ\Phi_h^{t+s}(\xi_n(s)))\\\nonumber
 & \leq d(\xi_n(s'+2h),\Psi_h^{t+s'+h}\circ\Phi_h^{t+s'}(\xi_n(s'))) \\\nonumber
& \qquad + \rho(a,t;r,l)(s-s') [1+(1+hK_A)^2] \\
& \qquad + Ch(s-s')^\alpha[1 + (1+hK_A)]
\end{align}
where we use Condition C and the Lipschitz continuity of $\xi_n$ in the second inequality.  For $n$ large enough, $s-s'$ is sufficiently small so that

\begin{align}\label{Eq17:sum}
 \frac{\rho(a,t;r,l)(s-s') [1+(1+hK_A)^2]  + Ch(s-s')^\alpha[1 + (1+hK_A)]}{h} \leq \frac{\epsilon}{2}
\end{align}
We combine (\ref{Eq18:sum}) and (\ref{Eq17:sum}) to get
\begin{align}\label{Eq11:sum}
\frac{d(\xi_n(s+2h),\Psi_h^{t+s+h}\circ\Phi_h^{t+s}(\xi_n(s)))}{h} &\leq \frac{ d(\xi_n(s'+2h),\Psi_h^{t+s'+h}\circ\Phi_h^{t+s'}(\xi_n(s')))}{h} +
\frac{\epsilon}{2}
\end{align}
\\
We need to estimate the second term of (\ref{Eq11:sum}). For the moment, let us assume $h=1/2^j$ for some $j\in N$ .
With this assumption, we can apply Lemma \ref{Discretization:sum} with $m:=n-j$ and $b:=\xi_n(s')$, and
get

\begin{align}\label{Eq3:Theorem:existence:sum}\nonumber
d(\Psi_{h}^{t+s'+h}\circ\Phi_{h}^{t+s'}(\xi_n(s')), \xi_n(s'+2h))&\leq C h\bigl [h^\alpha+\sum_{i=1}^{n-j}\tilde{g}\bigl(\frac{h}{2^i},\frac{h}{2^i}\bigr)\bigr]\\\nonumber
&=  \frac{C}{2^j}\bigl[\bigl(\frac{1}{2^j}\bigr)^\alpha+\sum_{i=1}^{n-j}\tilde{g}\bigl(\frac{1}{2^{i+j}},\frac{1}{2^{i+j}}\bigr)\bigr]\\
& \leq  \frac{C}{2^j}\bigl[\bigl(\frac{1}{2^j}\bigr)^\alpha+\sum_{i=j+1}^{\infty}\tilde{g}\bigl(\frac{1}{2^{i}},\frac{1}{2^{i}}\bigr)\bigr]
\end{align}
Notice (\ref{Eq3:Theorem:existence:sum}) is independent of $n$, i.e it holds uniformly for large $n\in N$. Now we combine (\ref{Eq6:sum}),
(\ref{Eq11:sum}) and
(\ref{Eq3:Theorem:existence:sum}), and let $h(= 1/2^j)\rightarrow 0$ then
 \begin{align}
\lim_{h\rightarrow 0}\frac{d(\tilde{\sigma}(t+2h),\Psi_h^{t+h}\circ\Phi_h^t(\tilde{\sigma}(t)))}{h}&\leq
\lim_{j\rightarrow \infty}C\bigl[\bigl({\frac{1}{2^j}}\bigr)^\alpha+\sum_{i=j+1}^{\infty}\tilde{g}\bigl(\frac{1}{2^{i}},\frac{1}{2^{i}}\bigr)\bigr] +
\epsilon =\epsilon
 \end{align}
This gives (\ref{Eq4:Theorem:existence:sum}).

For general $h$, let $k$ be an integer satisfying
\begin{equation}\label{Eq8:sum}
\frac{k}{2^n}\leq h< \frac{k+1}{2^n}
\end{equation}
 and define $h':=\frac{k}{2^n}$.
We exploit Lemma \ref{Check-solution:sum} to estimate the last term in (\ref{Eq16:sum}) and get
\begin{align}\label{Eq7:sum}\nonumber
\frac{d(\xi_n(s+2h),\Psi_h^{t+s+h}\circ\Phi_h^{t+s}(\xi_n(s)))}{h}
\leq &\frac{d(\xi_n(s+2h'),\Psi_{h'}^{t+s+h'}\circ\Phi_{h'}^{t+s}(\xi_n(s)))}{h'} \\\nonumber
 & + C\bigl [\frac{1}{k} + (\frac{1}{2^n})^\alpha\bigr ]\\
 \leq &\frac{d(\xi_n(s+2h'),\Psi_{h'}^{t+s+h'}\circ\Phi_{h'}^{t+s}(\xi_n(s)))}{h'} + \frac{\epsilon}{2}
\end{align}
where we assumed $n$ is large enough to get second inequality. We combine (\ref{Eq16:sum}) and (\ref{Eq7:sum})

\begin{align}\label{Eq7:Theorem:existence:sum}
\frac{d(\tilde{\sigma}(s+2h),\Psi_h^{t+s+h}\circ\Phi_h^{t+s}(\tilde{\sigma}(s)))}{h}\leq \frac{d(\Psi_{h'}^{t+s+h'}\circ\Phi_{h'}^{t+s}(\xi_n(s)), \xi_n(s+2h'))}{h'} +\epsilon
\end{align}

Let $m$ be a nonnegative integer such that $2^m \leq k < 2^{m+1}$ i.e
\begin{equation}\label{Eq19:sum}
\frac{2^m}{2^n} \leq h' < \frac{2^{m+1}}{2^n}
\end{equation}
\\
From Remark \ref{Remark:Discretization:sum} with $b:=\xi_n(s)$ and $u:= h'/k$, we have
\begin{align}\label{Eq5:Theorem:existence:sum}
d(\Psi_{h'}^{t+s+h'}\circ\Phi_{h'}^{t+s}(\xi_n(s)), \xi_n(s+2h'))\leq C2h'\bigl[(2h')^\alpha + \sum_{i=m+2}^\infty \tilde{g}\bigl(\frac{1}{2^i},\frac{1}{2^i}\bigr)\bigr]
\end{align}
\\
We combine (\ref{Eq7:Theorem:existence:sum}) and (\ref{Eq5:Theorem:existence:sum}) to get
\begin{align}\label{Eq20:sum}
\frac{d(\tilde{\sigma}(s+2h),\Psi_h^{t+s+h}\circ\Phi_h^{t+s}(\tilde{\sigma}(s)))}{h}\leq 2C\bigl[(2h')^\alpha +
\sum_{i=m+2}^\infty \tilde{g}\bigl(\frac{1}{2^i},\frac{1}{2^i}\bigr)\bigr] +\epsilon
\end{align}
\\
Notice that $h'$ converges to $h$ and $m $ increase to $\infty$ as $n\rightarrow \infty $. This implies, for all sufficiently large $n$, we have
\begin{align}\label{Eq21:sum}
 2C\bigl[(2h')^\alpha + \sum_{i=m+2}^\infty \tilde{g}\bigl(\frac{1}{2^i},\frac{1}{2^i}\bigr)\bigr]
 \leq 2C(2h)^\alpha + \epsilon
\end{align}
which is independent of $n$. We combine (\ref{Eq20:sum}) and (\ref{Eq21:sum}), and let $h$ converge to $0$ then we have

\begin{align}\nonumber
\lim_{h\rightarrow 0}\frac{d(\tilde{\sigma}(s+2h),\Psi_h^{t+s+h}\circ\Phi_h^{t+s}(\tilde{\sigma}(s)))}{h}\leq \lim_{h\rightarrow 0} 2C(2h)^\alpha + \epsilon = \epsilon
\end{align}

%
%
%
%
This gives (\ref{Eq4:Theorem:existence:sum}) and  concludes the proof.
\end{proof}

\begin{lem}\label{Check-solution:sum}
Let $\xi_n:[0,c)\rightarrow X$ be the n-th discretized solution constructed in Theorem \ref{Theorem:existence:sum} and $0<h< c$ be fixed.
If $k/2^n\leq h<(k+1)/2^n$ for some integer $k$ then there is a constant $C>0$ such that
 \begin{align*}
 \frac{d(\Psi_h^{u+h}\circ\Phi_h^u(\xi_n(s)), \xi_n(s+2h))}{h}&\leq \frac{d(\Psi_{h'}^{u+h'}\circ\Phi_{h'}^u(\xi_n(s)), \xi_n(s+2h'))}{h} +
 C \frac{|h-h'|}{h}\\
          & \qquad \qquad \qquad +C|h-h'|^\alpha
\end{align*}
Here, $h' = k/2^n$ and $C$ depends only on $c$ and $K_A.$
\end{lem}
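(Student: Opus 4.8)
The plan is to pass from the scale-$h$ quantity to the scale-$h'$ quantity by a short chain of triangle inequalities, replacing one ``ingredient'' at a time: the arc-field bound $\rho$ lets us change a duration parameter, Condition C lets us change a time parameter, Condition A lets us change a base point, and the equi-Lipschitz property of $\xi_n$ lets us change the second argument. Throughout, write $b:=\xi_n(s)$, and recall that $h'\le h$, $|h-h'|<2^{-n}$, and that — exactly as in the proof of Theorem~\ref{Theorem:existence:sum} — every base point and every time occurring below lies in $B(a,r)$ and in $[t,T_t]$ respectively, so that $\rho(\cdot,\cdot)\le\rho(a,t;r,l)$ at each of them (this single constant bounding the speeds of both $\Phi$ and $\Psi$), Condition A holds with constant $K_A$, and Condition C holds with some constant which will be absorbed into the final $C$.

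The main step is to estimate, with $q:=\Phi_h^u(b)$ and $q':=\Phi_{h'}^u(b)$,
\begin{align*}
d\bigl(\Psi_h^{u+h}\circ\Phi_h^u(b),\,\Psi_{h'}^{u+h'}\circ\Phi_{h'}^u(b)\bigr)
&\leq d\bigl(\Psi_h^{u+h}(q),\Psi_{h'}^{u+h}(q)\bigr)
+ d\bigl(\Psi_{h'}^{u+h}(q),\Psi_{h'}^{u+h'}(q)\bigr)\\
&\qquad + d\bigl(\Psi_{h'}^{u+h'}(q),\Psi_{h'}^{u+h'}(q')\bigr).
\end{align*}
The first term changes only the $\Psi$-duration, so by the definition of $\rho$ it is $\leq\rho(a,t;r,l)\,|h-h'|$; the second changes only the $\Psi$-time, so by Condition C it is $\leq C\,h'\,|h-h'|^\alpha\leq C\,h\,|h-h'|^\alpha$ — and here the inequality $h'\le h$ is exactly what produces the term $C|h-h'|^\alpha$ after division by $h$; the third changes only a base point inside $\Psi_{h'}^{u+h'}$, so by Condition A it is $\leq d(q,q')(1+h'K_A)\leq\rho(a,t;r,l)(1+K_A)\,|h-h'|$, using $d(q,q')=d(\Phi_h^u(b),\Phi_{h'}^u(b))\leq\rho(a,t;r,l)|h-h'|$. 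Separately, the equi-Lipschitz property of $\xi_n$ gives $d(\xi_n(s+2h),\xi_n(s+2h'))\leq 2\rho(a,t;r,l)|h-h'|$.

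It remains to assemble these via
\begin{align*}
d\bigl(\Psi_h^{u+h}\circ\Phi_h^u(b),\xi_n(s+2h)\bigr)
&\leq d\bigl(\Psi_{h'}^{u+h'}\circ\Phi_{h'}^u(b),\xi_n(s+2h')\bigr)\\
&\quad + d\bigl(\Psi_h^{u+h}\circ\Phi_h^u(b),\Psi_{h'}^{u+h'}\circ\Phi_{h'}^u(b)\bigr)\\
&\quad + d\bigl(\xi_n(s+2h),\xi_n(s+2h')\bigr),
\end{align*}
insert the four bounds, and divide by $h$; this yields the claim with $C$ any constant dominating both $\rho(a,t;r,l)(4+K_A)$ and the Condition~C constant, a quantity independent of $n,h,s,u$. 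There is no genuine difficulty in this argument; the only point requiring care is checking that every intermediate point stays inside $B(a,r)$ (so that Conditions A and C and the speed bound $\rho(\cdot,\cdot)\le\rho(a,t;r,l)$ are legitimately available), which holds because $\xi_n$ issues from $a$ with speed $\le\rho(a,t;r,l)$ and $s+2h<c=\min\{r/\rho(a,t;r,l),l\}$, together with the fact that the times $u,u+h,u+h'$ lie in $[t,T_t]$ — both inherited from the set-up of Theorem~\ref{Theorem:existence:sum}.
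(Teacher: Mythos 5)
Your proposal is correct and follows essentially the same route as the paper: the same outer triangle inequality, the Lipschitz bound $2\rho(a,t;r,l)|h-h'|$ for $d(\xi_n(s+2h),\xi_n(s+2h'))$, and a decomposition of $d(\Psi_h^{u+h}\circ\Phi_h^u(\xi_n(s)),\Psi_{h'}^{u+h'}\circ\Phi_{h'}^u(\xi_n(s)))$ into a duration change (speed bound $\rho$), a time change (Condition C, giving $Ch|h-h'|^\alpha$), and a base-point change (Condition A plus the speed bound on $\Phi$). The only difference is the order in which you swap the parameters inside the composite, which is immaterial; the resulting constant matches the paper's $\rho(a,t;r,l)[3+(1+hK_A)]$ plus the Condition C constant.
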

\begin{proof}
 Triangle inequality gives
\begin{align}\label{Eq1:lemma:Check-solution:sum}\nonumber
& d(\Psi_h^{u+h}\circ\Phi_h^u(\xi_n(s)) , \xi_n(s+2h))\\\nonumber
&\leq d(\Psi_h^{u+h}\circ\Phi_h^u(\xi_n(s)), \Psi_{h'}^{u+h'}\circ\Phi_{h'}^u(\xi_n(s)))
 + d(\Psi_{h'}^{u+h'}\circ\Phi_{h'}^u(\xi_n(s)), \xi_n(s+2h')) \\
 & \quad + d(\xi_n(s+2h'), \xi_n(s+2h))
\end{align}
\\
Let us first estimate the last term of (\ref{Eq1:lemma:Check-solution:sum})
\begin{align}\label{Eq2:lemma:Check-solution:sum}
 d(\xi_n(s+2h'), \xi_n(s+2h))\leq 2\rho(a,t;r,l)|h-h'|
\end{align}
which comes from the Lipschitz continuity of $\xi_n$.

To estimate the first term of (\ref{Eq1:lemma:Check-solution:sum}), we exploit triangle inequality
\begin{align}\label{Eq1:integer:sum}\nonumber
d(\Psi_h^{u+h}\circ\Phi_h^u(\xi_n(s)), \Psi_{h'}^{u+h'}\circ\Phi_{h'}^u(\xi_n(s)))
&\leq d(\Psi_h^{u+h'}\circ\Phi_h^u(\xi_n(s)), \Psi_{h'}^{u+h'}\circ\Phi_{h'}^u(\xi_n(s)))\\
& \qquad +d(\Psi_h^{u+h}\circ\Phi_h^u(\xi_n(s)), \Psi_{h}^{u+h'}\circ\Phi_{h}^u(\xi_n(s)))
\end{align}
\\
Let us estimate the righthand side of (\ref{Eq1:integer:sum}) term by term.
For the first term, we use the Lipschitz continuity of $\Phi,\Psi$ and Condition A to get
\begin{align}\label{Eq2:integer:sum}\nonumber
d(\Psi_h^{u+h'}\circ\Phi_h^u(\xi_n(s)), \Psi_{h'}^{u+h'}\circ\Phi_{h'}^u(\xi_n(s)))
&\leq d(\Psi_h^{u+h'}\circ\Phi_h^u(\xi_n(s)), \Psi_{h'}^{u+h'}\circ\Phi_{h}^u(\xi_n(s)))\\\nonumber
                             & \qquad \qquad + d(\Psi_{h'}^{u+h'}\circ\Phi_h^u(\xi_n(s)), \Psi_{h'}^{u+h'}\circ\Phi_{h'}^u(\xi_n(s)))\\\nonumber
                             &\leq \rho(a,t;r,l) |h-h'| \\\nonumber
                             & \qquad \qquad + d(\Phi_h^u(\xi_n(s)),\Phi_{h'}^u(\xi_n(s)))(1+hK_A)\\
                             & \leq \rho(a,t;r,l) |h-h'|[1+ (1+hK_A)]
\end{align}
For the second term, we use Condition C and get
\begin{align}\label{Eq3:integer:sum}
d(\Psi_h^{u+h}\circ\Phi_h^u(\xi_n(s)), \Psi_{h}^{u+h'}\circ\Phi_{h}^u(\xi_n(s)))\leq Ch|h-h'|^\alpha
\end{align}
\\
Combine (\ref{Eq1:integer:sum}),(\ref{Eq2:integer:sum}) and (\ref{Eq3:integer:sum})
\begin{align}\label{Eq4:integer:sum}\nonumber
d(\Psi_h^{u+h}\circ\Phi_h^u(\xi_n(s)), \Psi_{h'}^{u+h'}\circ\Phi_{h'}^u(\xi_n(s)))
&\leq \rho(a,t;r,l) |h-h'|[1+ (1+hK_A)] \\
& \qquad+ Ch|h-h'|^\alpha
\end{align}
\\
Finally, (\ref{Eq1:lemma:Check-solution:sum}), (\ref{Eq2:lemma:Check-solution:sum}) and (\ref{Eq4:integer:sum}) give
\begin{align}\label{Eq4:integer:sum}\nonumber
d(\Psi_h^{u+h}\circ\Phi_h^u(\xi_n(s)) , \xi_n(s+2h))
&\leq  d(\Psi_{h'}^{u+h'}\circ\Phi_{h'}^u(\xi_n(s)), \xi_n(s+2h')) \\
 & \quad + \rho(a,t;r,l) |h-h'|[3+ (1+hK_A)]+ Ch|h-h'|^\alpha
\end{align}
Which concludes the proof.

\end{proof}

\begin{lem}\label{Discretization:sum}
Let $\Phi$ and $\Psi$ be arc fields satisfying Condition A,B,C and D'. For given $a\in X$ and $t\in[0,\infty)$, let $r_a,\epsilon_a$ and $T_t$ be
constants in those conditions. For $b\in B(a,r_a)$, $h\in[0,\epsilon_a]$ and $s,s+h\in[t,T_t] ,$ there exists a constant $C>0$ such that
\begin{equation*}
d(\Psi_h^{s+h}\circ\Phi_h^s(b),\Psi_{h/2^n}^{s+(2^{m+1}-1)h/2^n}\circ\Phi_{h/2^n}^{s+(2^{m+1}-2)h/2^n}\circ \cdots \circ\Psi_{h/2^n}^{s+h/2^n}
\circ\Phi_{h/2^n}^{s}(b) ) \leq  Ch\bigl[h^\alpha+\sum_{i=1}^m\tilde{g}\bigl(\frac{h}{2^i},\frac{h}{2^i}\bigr)\bigr]
\end{equation*}
where $m$ and $n$ are nonnegative integers satisfying $m\leq n.$ Here, $C$ depends only on $K$ and $T_t.$

\end{lem}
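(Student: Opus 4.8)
The plan is to reach the composition on the right of the claimed inequality from the coarse two-step map $\Psi_h^{s+h}\circ\Phi_h^s$ by $m$ successive dyadic refinements, to bound the error introduced at each refinement, and to sum. For $0\le i\le m$ let $G_i(b)$ denote the alternating composition of $2^{i+1}$ maps of step length $h/2^i$ issuing from $b$,
\[
G_i(b):=\Psi_{h/2^i}^{s+(2^{i+1}-1)h/2^i}\circ\Phi_{h/2^i}^{s+(2^{i+1}-2)h/2^i}\circ\cdots\circ\Psi_{h/2^i}^{s+h/2^i}\circ\Phi_{h/2^i}^{s}(b),
\]
so that $G_0(b)=\Psi_h^{s+h}\circ\Phi_h^s(b)$, the point $G_m(b)$ is the one appearing on the right-hand side (its fine step being written $h/2^m$), and each $G_i$ advances a total time $2h$. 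Exactly as in the proof of Theorem \ref{Theorem:existence:sum}, and using only the local speed bound $\rho(a,t;r_a,\epsilon_a)<\infty$ together with the fact that only a time $2h$ elapses, one may assume (after shrinking $r_a$ and, if necessary, $\epsilon_a$) that every partial image of $b$ under the maps occurring in any $G_i$ stays in $B(a,r_a)$ and that every time occurring there lies in $[t,T_t]$, so that Conditions A, B, C, D' are available along these trajectories. By the triangle inequality it then suffices to bound $\sum_{i=0}^{m-1}d(G_i(b),G_{i+1}(b))$.

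To estimate $d(G_i(b),G_{i+1}(b))$, regard $G_i$ as the composition of $2^i$ consecutive ``double steps'', the $p$-th of which is $\Psi_{h/2^i}^{u_p+h/2^i}\circ\Phi_{h/2^i}^{u_p}$ with $u_p:=s+(p-1)h/2^{i-1}$. One checks directly that $G_{i+1}$ is precisely $G_i$ with each double step replaced by the four-step map $\Psi_{h/2^{i+1}}^{u_p+3h/2^{i+1}}\circ\Phi_{h/2^{i+1}}^{u_p+2h/2^{i+1}}\circ\Psi_{h/2^{i+1}}^{u_p+h/2^{i+1}}\circ\Phi_{h/2^{i+1}}^{u_p}$. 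By Remark \ref{Remark:one-step:sum} (applied with $l=r=h/2^{i+1}$ and $b_1=b_2$, so the leading term in (\ref{Eq2:remark:one-step:sum}) vanishes), this four-step map differs from the corresponding double step, uniformly over the starting point in $B(a,r_a)$, by at most $C(h/2^{i+1})\le 3(h/2^{i+1})(C_d(h/2^{i+1})^\alpha+\tilde{g}(h/2^{i+1},h/2^{i+1}))$, the last bound being (\ref{Property:C}). Replacing the $2^i$ double steps one at a time and pushing each discrepancy forward through the maps that follow it — each a $\Phi$- or $\Psi$-step, hence Lipschitz with constant $1+(h/2^i)K_A$ by Condition A, so the product of all of them over an elapsed time $\le 2h$ is at most $L:=e^{2K}$ — yields
\[
d(G_i(b),G_{i+1}(b))\le L\cdot 2^i\cdot 3(h/2^{i+1})\bigl(C_d(h/2^{i+1})^\alpha+\tilde{g}(h/2^{i+1},h/2^{i+1})\bigr)=\frac{3L}{2}\,h\bigl(C_d(h/2^{i+1})^\alpha+\tilde{g}(h/2^{i+1},h/2^{i+1})\bigr).
\]
The decisive cancellation is that the factor $2^i$ (the number of double steps at level $i$) is exactly absorbed by the factor $h/2^{i+1}$ coming linearly from one refinement, so that each level contributes an error of order $h$.

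Summing over $i=0,\dots,m-1$, the terms $C_d(h/2^{i+1})^\alpha$ form a geometric series with ratio $2^{-\alpha}<1$, hence contribute at most $C_d h^\alpha/(1-2^{-\alpha})$, while $\sum_{i=0}^{m-1}\tilde{g}(h/2^{i+1},h/2^{i+1})=\sum_{i=1}^{m}\tilde{g}(h/2^{i},h/2^{i})$. Therefore
\[
d\bigl(\Psi_h^{s+h}\circ\Phi_h^s(b),G_m(b)\bigr)\le \frac{3L}{2}\,h\Bigl(\frac{C_d}{1-2^{-\alpha}}\,h^\alpha+\sum_{i=1}^{m}\tilde{g}(h/2^{i},h/2^{i})\Bigr)\le Ch\Bigl(h^\alpha+\sum_{i=1}^{m}\tilde{g}(h/2^{i},h/2^{i})\Bigr),
\]
with $C$ depending only on $K$, on the fixed exponent $\alpha$, and on an upper bound for $2h\le 2\epsilon_a\le 2$ (through $L$), i.e.\ only on $K$ and $T_t$.

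The only genuinely delicate points lie in the middle step: one must verify that $G_{i+1}$ really has the combinatorial form of $G_i$ with every double step refined as above, keep all intermediate points in $B(a,r_a)$ so that Remark \ref{Remark:one-step:sum} and Condition A may be invoked along the trajectories, and make sure the Lipschitz amplification $L$ is bounded independently of $i$, $m$ and $n$; granting these, the summation is routine. I expect the bookkeeping of times and the uniform-in-$n$ control of the intermediate points (not merely $b$ itself) to be where the care is needed, while the cancellation $2^i\cdot h/2^{i+1}=h/2$ and the summability built into $\tilde{g}$ make the final estimate essentially automatic.
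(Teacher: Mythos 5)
Your proof is correct and follows essentially the same route as the paper: both telescope through the dyadic refinement levels, bound each double-step-to-four-step replacement by $C(h/2^{i+1})$ via Lemma \ref{One-step:sum} (your Remark \ref{Remark:one-step:sum} with equal parameters), control the propagation by a uniformly bounded Lipschitz factor, and sum using the geometric decay of the $h^{1+\alpha}$ terms and the summability of $\tilde{g}$. The only difference is bookkeeping — you replace one double step at a time with $b_1=b_2$ and push the discrepancy forward with an explicit factor $e^{2K}$, whereas the paper runs the recursion of Lemma \ref{One-step:sum} with $b_1\neq b_2$ so the $(1+hK)^3$ factors accumulate inside the estimate — which is not a genuinely different argument.
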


\begin{proof}
\begin{figure}[h]
\centering\centerline{
\includegraphics[width=17cm]{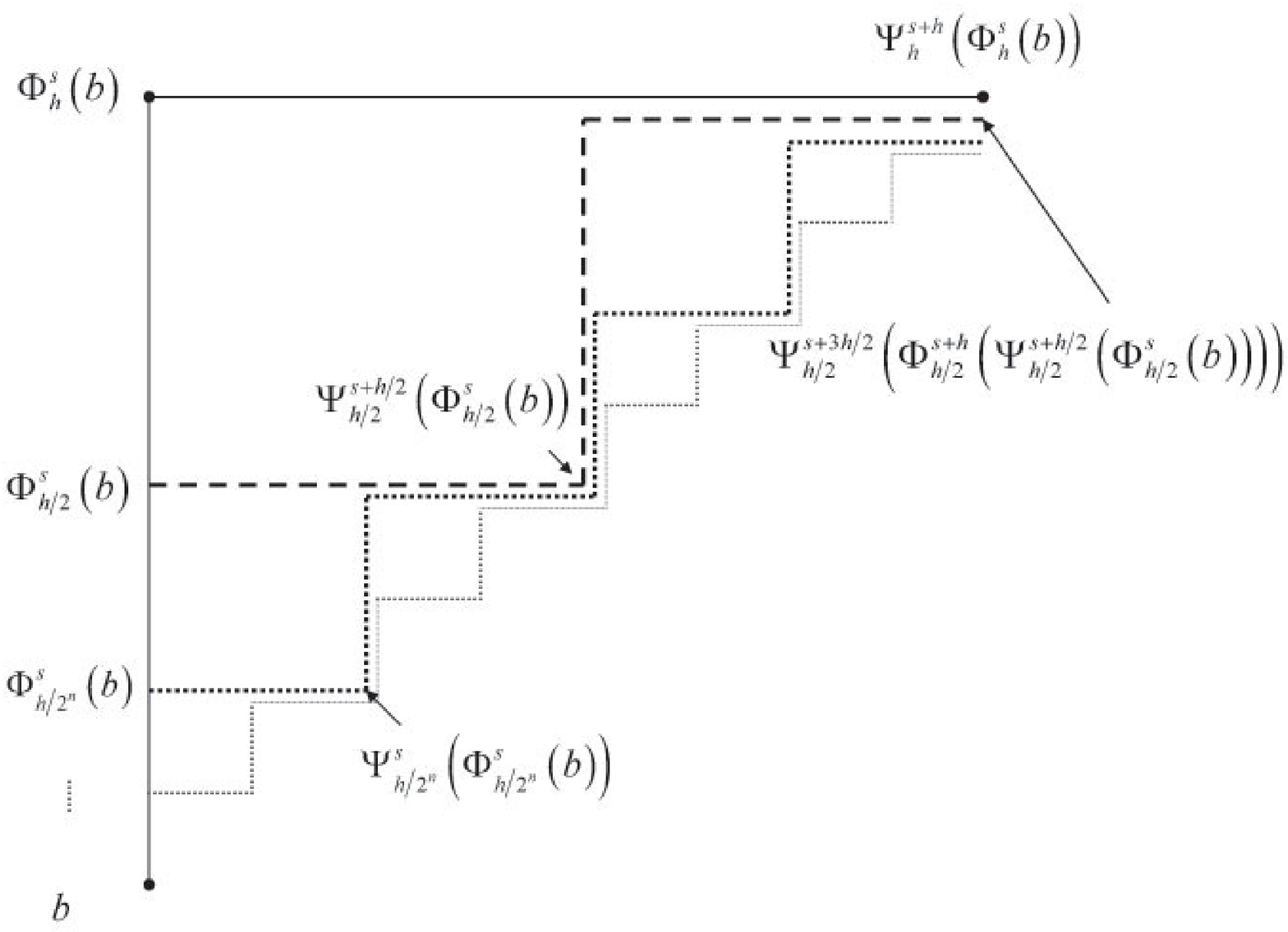}}
    \caption{
    }
\label{fig5_overload}
\end{figure}

We use Lemma \ref{One-step:sum},
\begin{align*}
d_1:=d(\Psi_h^{s+h}\circ\Phi_h^s(b),\Psi_{h/2}^{s+3h/2}\circ\Phi_{h/2}^{s+h}\circ \Psi_{h/2}^{s+h/2}\circ\Phi_{h/2}^{s}(b))
\leq C\bigl(\frac{h}{2}\bigr)
\end{align*}
where $C(h):= C_d h^{1+\alpha}(1+hK)+h\tilde{g}(h,h)[1+(1+hK)^2]$.
\begin{align}\label{Eq:d2:sum}\nonumber
d_2:&=d(\Psi_{h/2}^{s+3h/2}\circ\Phi_{h/2}^{s+h}\circ \Psi_{h/2}^{s+h/2}\circ\Phi_{h/2}^{s}(b),
\Psi_{h/2^2}^{s+7h/2^2}\circ\Phi_{h/2^2}^{s+6h/2^2}\circ \cdots \circ
\Psi_{h/2^2}^{s+h/2^2}\circ\Phi_{h/2^2}^{s}(b))\\
&\leq \tilde{d}_2\bigl(1+\frac{h}{2^2}K\bigr)^3 + C\bigl(\frac{h}{2^2}\bigr)
\end{align}
where $\tilde{d}_2:=d(\Psi_{h/2}^{s+h/2}\circ\Phi_{h/2}^{s}(b),\Psi_{h/2^2}^{s+3h/2^2}\circ\Phi_{h/2^2}^{s+2h/2^2}\circ
\Psi_{h/2^2}^{s+h/2^2}\circ\Phi_{h/2^2}^{s}(b) )$.\\

Again, by Lemma \ref{One-step:sum}
\begin{align}\label{Eq:td2:sum}
\tilde{d}_2\leq C\bigl(\frac{h}{2^2}\bigr)
\end{align}

We combine (\ref{Eq:d2:sum}) and (\ref{Eq:td2:sum}) to get

\begin{align}
d_2\leq C\bigl(\frac{h}{2^2}\bigr)\bigl[1+\bigl(1+\frac{h}{2^2}K\bigr)^3\bigr]
\end{align}

Similarly,
\begin{align*}
d_3:&= d(\Psi_{h/2^2}^{s+7h/2^2}\circ\Phi_{h/2^2}^{s+6h/2^2}\circ \cdots \circ\Psi_{h/2^2}^{s+h/2^2}\circ\Phi_{h/2^2}^{s}(b),
    \Psi_{h/2^3}^{s+15h/2^3}\circ\Phi_{h/2^3}^{s+14h/2^3}\circ \cdots \circ\Psi_{h/2^3}^{s+h/2^3}\circ\Phi_{h/2^3}^{s}(b) )      \\
 &\leq C\bigl(\frac{h}{2^3}\bigr)
\bigl[1+(1+\frac{h}{2^3}K)^3 +\bigl(1+\frac{h}{2^3}K\bigr)^{2\cdot 3} + \bigl(1+\frac{h}{2^3}K\bigr)^{3\cdot 3}\bigr]
\end{align*}

In general,
\begin{align*}
d_l&:= d(\Psi_{h/2^{l-1}}^{s+(2^l-1)h/2^{l-1}}\circ\Phi_{h/2^{l-1}}^{s+(2^l-2)h/2^{l-1}}\circ \cdots \circ\Psi_{h/2^{l-1}}^{s+h/2^{l-1}}\circ
\Phi_{h/2^{(l-1)}}^{s}(b)\\
& \qquad \qquad \qquad \qquad , \Psi_{h/2^l}^{s+(2^{l+1}-1)h/2^l}\circ\Phi_{h/2^l}^{s+(2^{l+1}-2)h/2^l}\circ \cdots \circ\Psi_{h/2^l}^{s+h/2^l}
\circ\Phi_{h/2^l}^{s}(b) )       \\
 &\leq C\bigl(\frac{h}{2^l}\bigr)
\bigl[1+(1+\frac{h}{2^l}K)^3 +\bigl(1+\frac{h}{2^l}K\bigr)^{2\cdot 3} + \cdots +\bigl(1+\frac{h}{2^l}K\bigr)^{(2l-1) 3}\bigr]\\
& \leq 2lC\bigl(\frac{h}{2^l}\bigr)\bigl(1+\frac{h}{2^l}K\bigr)^{(2l-1) 3}
\end{align*}
Notice $C(h)\leq 3[C_d h^{1+\alpha}+h\tilde{g}(h,h)]$ for small $h$. And there is a constant $C$ such that, for all $l$
$$\bigl(1+\frac{h}{2^l}K\bigr)^{(2l-1) 3}\leq C  $$
So we have
\begin{align*}
d_l\leq Cl\bigl[\bigl(\frac{h}{2^l} \bigr)^{1+\alpha}+\frac{h}{2^l}\tilde{g}\bigl(\frac{h}{2^l},\frac{h}{2^l}\bigr)\bigr]
\end{align*}
This implies
\begin{align*}
\sum_{i=1}^l d_i & \leq C\sum_{i=1}^l i\bigl [\bigl(\frac{h}{2^i} \bigr)^{1+\alpha}+\frac{h}{2^i}\tilde{g}\bigl(\frac{h}{2^i},\frac{h}{2^i}\bigr)\bigr]\\
&\leq Ch^{1+\alpha}\sum_{i=1}^l\frac{i}{2^i} + Ch\sum_{i=1}^l\tilde{g}\bigl( \frac{h}{2^i},\frac{h}{2^i}\bigr)\\
&\leq Ch\bigl[h^\alpha + \sum_{i=1}^l\tilde{g}\bigl( \frac{h}{2^i},\frac{h}{2^i}\bigr) \bigr]
\end{align*}
\end{proof}

\begin{rem}\label{Remark:Discretization:sum}
 Lemma \ref{Discretization:sum} says, if $k=2^m$ then
\begin{equation*}
d(\Psi_{ku}^{s+ku}\circ\Phi_{ku}^s(b),\Psi_{u}^{s+(2k-1)u}\circ\Phi_{u}^{s+(2k-2)u}\circ \cdots \circ\Psi_{u}^{s+u}
\circ\Phi_{u}^{s}(b) ) \leq  C(ku)[(ku)^\alpha+\sum_{i=1}^m\tilde{g}(\frac{ku}{2^i},\frac{ku}{2^i})]
\end{equation*}

In general, we can show that if $2^m \leq k < 2^{m+1}$ then
\begin{align}\label{Eq:Remark:Discretization:sum}\nonumber
&d(\Psi_{ku}^{s+ku}\circ\Phi_{ku}^s(b),\Psi_{u}^{s+(2k-1)u}\circ\Phi_{u}^{s+(2k-2)u}\circ \cdots \circ\Psi_{u}^{s+u}
\circ\Phi_{u}^{s}(b) ) \\
&\leq  C(2^{m+1}u)\bigl[(2^{m+1}u)^\alpha+\sum_{i=1}^{m+1}\tilde{g}\bigl(\frac{2^{m+1}u}{2^i},\frac{2^{m+1}u}{2^i}\bigr)\bigr]
\end{align}
\end{rem}

\begin{thm}(Uniqueness)\label{Theorem:uniqueness:sum}
Let $\sigma_{a,t}:[t,t+c)\rightarrow X$ be a solution curve of the
 sum of two  arc fields $\Phi$ and $\Psi$ with initial position $a$ at time $t$,
and let $\sigma_{b,u}:[u,u+c)\rightarrow X$ be a solution curve with initial position $b$ at time $u$.
 Then we have
$$d(\sigma_{a,t}(t+s), \sigma_{b,u}(u+s))\leq e^{K_As}d(a,b) + \tilde{C}|t-u|^\alpha , \quad {\rm for} \quad s\in[0,c) $$
where $\tilde{C}$ is a constant depending only on $c$ and $K_A$.
%
\end{thm}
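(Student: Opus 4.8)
The plan is to mimic the proof of Theorem~\ref{Theorem-uniqueness1}. After passing to time-shifted curves we split the comparison, by the triangle inequality, into a part with a common base time (handled by a Gronwall-type differential inequality along the curves) and a part with a common initial position but different base times (handled by a uniform-in-$n$ estimate on the discretized solutions of Theorem~\ref{Theorem:existence:sum}). Throughout we shrink $r$ and $c$, exactly as in the proofs of Theorems~\ref{Theorem1} and~\ref{Theorem:existence:sum}, so that Conditions A, B, C, D$'$ hold on $B(a,r)$ and on the relevant time interval simultaneously for $\Phi$ and $\Psi$ with common constants $K_A,K_B,K_D,\epsilon,\alpha,C,C_d$; arranging this jointly for the two fields and for the base times $t,u$ (we may assume $|t-u|$ small) is routine. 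Writing $\tilde\sigma_{\beta,\tau}(s):=\sigma_{\beta,\tau}(\tau+s)$ on $[0,c)$, and letting $\hat\sigma_{b,t},\hat\sigma_{b,u}$ be the uniform limits of the discretized solutions $\xi_n^t,\xi_n^u$ with initial position $b$ at times $t$ and $u$, it suffices to bound $d(\tilde\sigma_{a,t}(s),\hat\sigma_{b,t}(s))$, $d(\hat\sigma_{b,t}(s),\hat\sigma_{b,u}(s))$ and $d(\hat\sigma_{b,u}(s),\tilde\sigma_{b,u}(s))$ separately.

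\emph{Common base time.} For any two solution curves $\tilde\sigma_{p,\tau},\tilde\sigma_{q,\tau}$ of the sum with the same base time $\tau$, set $g(s):=e^{-K_As}d(\tilde\sigma_{p,\tau}(s),\tilde\sigma_{q,\tau}(s))$. Using the defining relation of a solution of the sum in its two-step form (cf.\ \eqref{Eq4:Theorem:existence:sum}), namely $d(\tilde\sigma_{p,\tau}(s+2h),\Psi_h^{\tau+s+h}\circ\Phi_h^{\tau+s}(\tilde\sigma_{p,\tau}(s)))=o(h)$ and likewise for $q$, and applying Condition~A once to $\Psi$ and once to $\Phi$ (each with step $h$), one gets $d(\tilde\sigma_{p,\tau}(s+2h),\tilde\sigma_{q,\tau}(s+2h))\le d(\tilde\sigma_{p,\tau}(s),\tilde\sigma_{q,\tau}(s))(1+hK_A)^2+o(h)$. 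Hence $g(s+2h)-g(s)\le\bigl(e^{-2K_Ah}(1+hK_A)^2-1\bigr)g(s)+o(h)=o(h)(g(s)+1)$, because $e^{-2K_Ah}(1+hK_A)^2-1=-K_A^2h^2+O(h^3)$. Thus $D^+g\le0$ everywhere and, $g$ being continuous, $g$ is non-increasing, which yields $d(\tilde\sigma_{p,\tau}(s),\tilde\sigma_{q,\tau}(s))\le e^{K_As}d(p,q)$ on $[0,c)$. Taking $p=q$ shows that any two solution curves of the sum with the same initial position and base time coincide; in particular $\tilde\sigma_{b,u}=\hat\sigma_{b,u}$, so the third term vanishes, and taking $p=a,q=b,\tau=t$ bounds the first term by $e^{K_As}d(a,b)$.

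\emph{Different base time.} It remains to estimate $d(\hat\sigma_{b,t}(s),\hat\sigma_{b,u}(s))$. Both $\xi_n^t$ and $\xi_n^u$ start at $b$ and advance by alternating $\Phi$- and $\Psi$-steps of length $2^{-n}$, the base times of corresponding steps differing by exactly $|t-u|$. With $d_k:=d(\xi_n^t(k/2^n),\xi_n^u(k/2^n))$, a one-step estimate copied verbatim from \eqref{Eq2:Theorem:uniqueness1}--\eqref{Eq4:Theorem:uniqueness1} — insert $\Phi_{2^{-n}}^{u+(k-1)2^{-n}}(\xi_n^t((k-1)/2^n))$, or its $\Psi$-analogue, and control the shift of base time by Condition~C and the shift of base point by Condition~A — gives $d_k\le d_{k-1}(1+2^{-n}K_A)+C\,2^{-n}|t-u|^\alpha$, with constants independent of whether $\Phi$ or $\Psi$ acts on the $k$-th step (both satisfy A and C with the common constants). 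Since $d_0=0$, summing the geometric series yields, for $k/2^n\le c$, $d_k\le\frac{C}{K_A}\bigl((1+2^{-n}K_A)^k-1\bigr)|t-u|^\alpha\le\frac{C(e^{K_Ac}-1)}{K_A}|t-u|^\alpha=:\tilde C|t-u|^\alpha$, uniformly in $n$; so $d(\xi_n^t(s),\xi_n^u(s))\le\tilde C|t-u|^\alpha$ for all $s\in[0,c)$. Letting $n\to\infty$ and using $\xi_n^t\to\hat\sigma_{b,t}$, $\xi_n^u\to\hat\sigma_{b,u}$ uniformly (Theorem~\ref{Theorem:existence:sum}) gives $d(\hat\sigma_{b,t}(s),\hat\sigma_{b,u}(s))\le\tilde C|t-u|^\alpha$. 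Adding the three bounds proves the theorem, with $\tilde C$ depending only on $c$ and the constants of Conditions A and C.

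\emph{Main obstacle.} The only feature not already present in Theorem~\ref{Theorem-uniqueness1} is that a parameter increment of length $2h$ in the Gronwall step now carries two applications of Condition~A, one for each field; one must therefore use the two-step form $\Psi_h\circ\Phi_h$ of the solution property (rather than a single step of a rescaled field) so that the exponential rate stays $K_A$ and not $2K_A$. The compensating point is that $(1+hK_A)^2$ still agrees with $e^{2K_Ah}$ up to $O(h^2)$, so the inequality $D^+g\le0$ is unaffected; likewise the alternation of $\Phi$ and $\Psi$ in the discretized curves is harmless because both obey Conditions A and C with the same constants. Everything else — uniformity in $n$ of the discretized estimate, and the joint choice of the constants of Conditions A--D$'$ for $\Phi$ and $\Psi$ near $(a,t)$ and $(b,u)$ — is the same bookkeeping as in the earlier proofs.
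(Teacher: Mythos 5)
Your proposal is correct and follows exactly the route the paper intends: the paper's own ``proof'' of this theorem is just the remark that it is similar to the proof of Theorem~\ref{Theorem-uniqueness1}, and your argument is precisely that adaptation --- the triangle-inequality split into a common-base-time part and a base-time-shift part, the Gronwall step now carrying $(1+hK_A)^2$ from the two-step solution property (harmless since $e^{-2K_Ah}(1+hK_A)^2-1=O(h^2)$), and the geometric-series estimate on the alternating discretized curves. Your handling of the identification of the given solution curves with the limits of the discretized ones (via the $p=q$ case of the Gronwall bound) is if anything slightly more careful than the paper's template, and no genuine gaps appear beyond those already present there.
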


\begin{proof}
Similar to the proof of Theorem \ref{Theorem-uniqueness1}.
\end{proof}

\subsection{Adding flows}
Like what we did for a  single arc field in section 2, we impose the linear speed growth condition on $\Phi$ and $\Psi$, and get the following theorem.

\begin{thm}
Let $\Phi$ and $\Psi$ be arc fields with linear speed growth. Suppose that at each point $a\in X$ and $t\in[0,\infty)$, there is a solution curve
of sum of $\Phi$ and $\Psi$
$$\sigma_{a,t}:[t, t + c_{a,t})\rightarrow X$$
 with initial position $a$ at time $t$. Then $c_{a,t}$ can be chosen to be $\infty$.
\end{thm}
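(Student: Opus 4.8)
The plan is to mimic the proof of Theorem 4.4 of \cite{BC}, which the authors reference in the analogous single-arc-field statement (the theorem just before Section 3). The key mechanism is a continuation argument: if a solution curve $\sigma_{a,t}:[t,t+c_{a,t})\to X$ of the sum of $\Phi$ and $\Psi$ exists with $c_{a,t}<\infty$ maximal, we derive a contradiction by showing that $\sigma_{a,t}$ extends to the right endpoint and can then be continued further, using the local existence Theorem \ref{Theorem:existence:sum} at the endpoint together with the uniqueness Theorem \ref{Theorem:uniqueness:sum} to glue. The role of the linear speed growth hypothesis is to give an a priori bound ensuring the curve does not run off to infinity in finite time, so that the endpoint actually lies in $X$ (more precisely, in a complete ball where the local existence theorem applies).

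First I would set up the a priori estimate. Suppose toward a contradiction that $c := c_{a,t}$ is finite and maximal. Let $x\in X$, $c_1 = c_1(x)$, $c_2 = c_2(x)$ be the constants from the linear speed growth condition \eqref{linear growth1}. Since $\sigma := \sigma_{a,t}$ is a solution curve of the sum, on each subinterval it is Lipschitz with constant controlled by $\rho$ evaluated along the curve; writing $R(s) := d(x,\sigma(t+s))$, one shows from the definition of solution curve (the defining limit forces $d(\sigma(s+2h),\Psi_h^{s+h}\circ\Phi_h^s(\sigma(s))) = o(h)$) together with the speed bound $\rho$ on $\Phi$ and $\Psi$ and the local boundedness that $R$ is locally Lipschitz and satisfies a differential inequality of the form $\dot R(s) \le c_1 R(s) + c_2'$ for a suitable constant $c_2'$ (absorbing the factor of $2$ coming from the composition $\Psi\circ\Phi$ and the constants from the two arc fields). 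By Gronwall, $R(s)$ stays bounded on $[0,c)$, say $R(s)\le M$ for all $s<c$. Hence $\sigma([t,t+c))$ lies in the bounded set $\overline{B(x,M)}$.

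Next I would show $\sigma$ extends continuously to $t+c$. Because $\sigma$ has uniformly bounded speed on $[t,t+c)$ (the bound $\rho(\cdot)$ is locally bounded and the curve stays in a bounded set where we may take a uniform bound, shrinking if necessary so that $\overline{B(x,M)}$ — or the relevant sub-ball — is complete using local completeness), $\sigma$ is uniformly Lipschitz near $t+c$, so $\lim_{s\to (t+c)^-}\sigma(s)$ exists in the complete set and defines $\sigma(t+c) =: a^*\in X$. Now apply Theorem \ref{Theorem:existence:sum} at the point $a^*$ and time $t+c$: there is a solution curve $\tau:[t+c, t+c+c')\to X$ of the sum of $\Phi$ and $\Psi$ with initial position $a^*$ at time $t+c$, for some $c'>0$. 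Concatenating $\sigma$ on $[t,t+c)$ with $\tau$ on $[t+c,t+c+c')$ and checking that the defining limit \eqref{Eq-definition-sum} still holds at $s=t+c$ (it holds from the left by continuity of $\sigma$ and the arc-field regularity, and from the right because $\tau$ is a solution curve) yields a solution curve on $[t, t+c+c')$, contradicting maximality of $c$. Therefore $c_{a,t}=\infty$.

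The main obstacle is the a priori estimate in the second paragraph: one has to translate the pointwise solution-curve condition, which only controls $d(\sigma(s+2h),\Psi_h^{s+h}\circ\Phi_h^s(\sigma(s)))$ to first order in $h$, into an honest bound on the growth of $d(x,\sigma(t+s))$, and this requires combining the speed bounds $\rho$ for $\Phi$ and for $\Psi$, the local boundedness hypothesis in the definition of a time dependent arc field, and the linear growth estimate \eqref{linear growth1} — all uniformly along the a priori unknown trajectory. The technical device, exactly as in \cite{BC}, is to bound $d(x,\sigma(s+2h))$ by $d(x,\Psi_h^{s+h}\circ\Phi_h^s(\sigma(s))) + o(h) \le d(x,\sigma(s)) + (\text{speed})\cdot 2h + o(h)$ and feed the linear speed growth into "speed", getting the differential inequality for $R$; the rest is Gronwall and routine continuation bookkeeping. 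Since the paper explicitly says the proof is "similar to Theorem 4.4 of \cite{BC}," I would keep the write-up brief and refer there for the details common to the single-arc-field case, emphasizing only the modifications due to the composition $\Psi_h^{s+h}\circ\Phi_h^s$ and the time dependence.
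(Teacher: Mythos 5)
Your overall strategy---a priori bound from linear speed growth via Gronwall, followed by a continuation argument using the local existence Theorem \ref{Theorem:existence:sum} and the uniqueness Theorem \ref{Theorem:uniqueness:sum}---is exactly what the paper intends by its reference to Theorem 4.4 of \cite{BC}, and the Gronwall step (bounding the speed of $\sigma$ by the sum of the speed bounds $\rho$ of $\Phi$ and $\Psi$ and feeding in \eqref{linear growth1}) is sound.

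The genuine gap is the step where you extend $\sigma$ to the right endpoint $t+c$. You argue that $\sigma$ is Lipschitz near $t+c$, hence Cauchy, and that the limit $a^*=\lim_{s\to(t+c)^-}\sigma(s)$ ``exists in the complete set.'' But $X$ is only \emph{locally} complete: each point has a complete closed ball of some point-dependent, possibly small radius, and nothing guarantees that the large ball $\overline{B(x,M)}$ produced by your Gronwall estimate is complete. A Cauchy tail of $\sigma$ therefore need not converge in $X$ at all (think of a curve of bounded speed in a punctured plane drifting into the puncture), and the point $a^*$ at which you want to apply Theorem \ref{Theorem:existence:sum} may simply fail to exist. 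This is precisely the difficulty that distinguishes the locally complete setting from the complete or locally compact one, and it is the point of the whole framework of \cite{BC}. The repair is to never form the endpoint: restart at interior points. For $s<c$ one applies the local existence theorem at $\sigma(t+s)\in\overline{B(x,M)}$, where the existence time is $\min\{r/\rho(\sigma(t+s),t+s;r,l),l\}\geq \min\{r/(c_1(M+r)+c_2),l\}=:\delta>0$, a bound uniform over the bounded set thanks to \eqref{linear growth1} (this is where one must also argue, or assume as in \cite{BC}, that the radius $r$ of a complete ball on which Conditions A--D hold with uniform constants does not degenerate on $\overline{B(x,M)}$). Uniqueness (Theorem \ref{Theorem:uniqueness:sum}) then guarantees that the restarted solution agrees with $\sigma$ on the overlap, so concatenating at any $s>c-\delta$ produces a solution curve on $[t,t+s+\delta)$ with $s+\delta>c$; since the defining condition \eqref{Eq-definition-sum} is a one-sided limit in $h$, the concatenation is a solution curve, contradicting maximality of $c$. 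With this replacement for your second and third paragraphs the argument goes through.
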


Let $\mathcal{A}$ be the set of all time dependent arc fields which satisfy Condition A,B,C and linear speed growth condition.
 For each $\Phi \in \mathcal{A},$ solution curves of $\Phi$ generate a time dependent flow and we denote it by $\sigma_{\Phi}$.
 Likewise, we use notation $\sigma_{\Phi+\Psi}$ for the flow generated by  the
solution curves of the  sum of $\Phi$ and $\Psi$, when they satisfy Condition D.
 Notice that we have $\sigma_{\Phi+\Psi}=\sigma_{\Psi+\Phi}$ by symmetry.

 Let us define an equivalence relation $\sim$ in $\mathcal{A}$ as follows
$$ \Phi \sim \tilde{\Phi} \quad if \quad\sigma_{\Phi}=\sigma_{\tilde{\Phi}}$$
and we denote the equivalence class containing $\Phi$ by $[\Phi],$ i.e
$$[\Phi]:= \{\tilde{\Phi}\in\mathcal{A}:\Phi\sim \tilde{\Phi} \}$$
It is easy to see that $\sigma_{\Phi}$ can serve as an arc field and $\sigma_{\Phi}\in [\Phi].$

From the argument above,  there is an one to one correspondence between $\mathcal{A}/\sim$ and
the set of flows satisfying Condition A,B,C and the linear growth condition.
$$\mathcal{F}:=\{ \sigma_\Phi: \Phi\in\mathcal{A}\} \simeq \{[\Phi]: \Phi\in \mathcal{A} \} = \mathcal{A}/\sim $$

\begin{defn}\label{defn:sum:flows}
Let $\Phi,\Psi\in \mathcal{A}$ and suppose that $\Phi$ and $\Psi$ satisfy Condition D.
We define $\sigma_{\Phi} + \sigma_{\Psi}:= \sigma_{\Phi+\Psi}$ and call it the {\it sum }of two flows $\sigma_\Phi$ and $\sigma_\Psi.$
\end{defn}
\begin{lem}
The sum of  two flows $i.e$
$\sigma_\Phi +\sigma_\Psi$ is well defined.
\end{lem}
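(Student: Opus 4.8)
First let us make precise what \emph{well defined} should mean here: if $\Phi,\Phi',\Psi,\Psi'\in\mathcal{A}$ satisfy $\sigma_\Phi=\sigma_{\Phi'}$ and $\sigma_\Psi=\sigma_{\Psi'}$, and both pairs $(\Phi,\Psi)$ and $(\Phi',\Psi')$ satisfy Condition D, then we must show $\sigma_{\Phi+\Psi}=\sigma_{\Phi'+\Psi'}$. The plan is to reduce this to uniqueness of solution curves for a sum (Theorem \ref{Theorem:uniqueness:sum}): it suffices to prove that every solution curve $\sigma:[t,t+c)\rightarrow X$ of the sum of $\Phi$ and $\Psi$ is also a solution curve of the sum of $\Phi'$ and $\Psi'$ with the same initial position $a$ at time $t$, since then Theorem \ref{Theorem:uniqueness:sum} forces $\sigma$ to equal the curve $\sigma_{\Phi'+\Psi'}$ through $(a,t)$, and interchanging the primed and unprimed fields gives equality of the two flows.

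The key ingredient I would establish is a locally uniform tangency lemma: if $\Phi\sim\Phi'$ in $\mathcal{A}$ then for each $a\in X$ and $t\in[0,\infty)$ there are $r_a>0$, $T_t>t$ and a function $\omega$ with $\omega(l)\rightarrow0$ as $l\rightarrow0^+$ such that
\[
d\bigl(\Phi_l^{u}(y),{\Phi'}_l^{u}(y)\bigr)\le l\,\omega(l)\qquad\text{for all }y\in B(a,r_a),\ u\in[t,T_t].
\]
To get this, fix $(y,u)$ and let $\mu_{y,u}$ be the unique solution curve of $\Phi$ with initial position $y$ at time $u$ (Theorems \ref{Theorem1} and \ref{Theorem-uniqueness1}); because $\Phi\sim\Phi'$, the same curve is the $\Phi'$-solution curve through $(y,u)$. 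Running the proof of Theorem \ref{Theorem1} with $a$ replaced by $y$ and $t$ by $u$ — specifically the estimate $(\ref{Eq7})$ together with Lemma \ref{Check-solution1} taken at parameter $s=0$ — yields $d(\Phi_l^{u}(y),\mu_{y,u}(u+l))\le l\sup_{0\le r\le l}\eta_u(\mu_{y,u}(u+r),y,r,0)$. Since $\eta_u(\mu_{y,u}(u+r),y,r,0)=d(\mu_{y,u}(u+r),\Phi_r^{u}(y))K_A+\tilde g(r,0)\le r(L+\rho)K_A+\tilde g(r,0)$, where the Lipschitz bound $L$ of $\mu_{y,u}$, the speed bound $\rho=\rho(y,u;r_a,l)$, $K_A$ and $\tilde g$ are all uniform over $B(a,r_a)\times[t,T_t]$ by local boundedness of $\rho$ and Conditions A, B, C, the right-hand side is $\le l\,\omega_\Phi(l)$ with $\omega_\Phi(l)\rightarrow0$. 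The same bound holds for $\Phi'$, and the triangle inequality (using that $\mu_{y,u}$ is also the $\Phi'$-solution curve) finishes it with $\omega:=\omega_\Phi+\omega_{\Phi'}$.

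Finally, with $\sigma$ a solution curve of the sum of $\Phi$ and $\Psi$, fix $s\in[t,t+c)$ and put $x=\sigma(s)$. For $h>0$ small, ${\Phi'}_{2h}^{s}(x)$ stays in the localizing ball and $s+h\in[t,T_t]$, and I would bound
\[
d\bigl(\Psi_{2h}^{s+h}\circ\Phi_{2h}^{s}(x),{\Psi'}_{2h}^{s+h}\circ{\Phi'}_{2h}^{s}(x)\bigr)\le d\bigl(\Psi_{2h}^{s+h}\circ\Phi_{2h}^{s}(x),\Psi_{2h}^{s+h}\circ{\Phi'}_{2h}^{s}(x)\bigr)+d\bigl(\Psi_{2h}^{s+h}\circ{\Phi'}_{2h}^{s}(x),{\Psi'}_{2h}^{s+h}\circ{\Phi'}_{2h}^{s}(x)\bigr),
\]
the first summand being $\le d(\Phi_{2h}^{s}(x),{\Phi'}_{2h}^{s}(x))(1+2hK_A)\le 2h\,\omega_\Phi(2h)(1+2hK_A)$ by Condition A for $\Psi$ and the tangency estimate for $\Phi$, and the second $\le 2h\,\omega_\Psi(2h)$ by the tangency estimate for $\Psi$ at the point ${\Phi'}_{2h}^{s}(x)$ and time $s+h$. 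Both are $o(h)$, so combining with $\lim_{h\rightarrow0}d(\sigma(s+2h),\Psi_{2h}^{s+h}\circ\Phi_{2h}^{s}(\sigma(s)))/2h=0$ gives $\lim_{h\rightarrow0}d(\sigma(s+2h),{\Psi'}_{2h}^{s+h}\circ{\Phi'}_{2h}^{s}(\sigma(s)))/2h=0$, i.e.\ $\sigma$ solves the sum of $\Phi'$ and $\Psi'$, and Theorem \ref{Theorem:uniqueness:sum} concludes. The step I expect to require the most care is the locally uniform tangency lemma: the relation $\Phi\sim\Phi'$ is a priori only pointwise, and promoting it to an estimate uniform on a ball and a time interval forces one to re-inspect the proof of Theorem \ref{Theorem1} and check that every error term there is controlled purely by the locally uniform data of Conditions A, B and C.
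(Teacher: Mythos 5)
Your proposal is correct and follows essentially the same route as the paper: both arguments reduce well-definedness to showing that the one-step composite maps $\Psi_{h}^{s+h}\circ\Phi_{h}^{s}$ and ${\Psi'}_{h}^{s+h}\circ{\Phi'}_{h}^{s}$ agree to within $o(h)$, using a triangle-inequality decomposition, Condition A, and the fact that equivalent arc fields are tangent because they share solution curves, and then invoke uniqueness of solution curves for the sum. The only notable difference is that you make explicit (via the locally uniform tangency lemma) the uniformity needed to compare two equivalent arc fields at a base point that moves with $h$, a point the paper's proof passes over silently when it asserts $\lim_{h\to0}d(\Phi_h^{s+h}(\sigma_\Psi(s+h)),\tilde\Phi_h^{s+h}(\sigma_\Psi(s+h)))/h=0$ directly from $[\Phi]=[\tilde\Phi]$.
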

\begin{proof}
Let $\tilde{\Phi}\in[\Phi],$ $\tilde{\Psi}\in[\Psi]$ and suppose that  $\Phi$ and $\Psi$ satisfy Condition D.
We need to check
\begin{equation}\label{check:sum}
\sigma_{\Phi+\Psi}=\sigma_{\tilde{\Phi}+\tilde{\Psi}}
\end{equation}

To show (\ref{check:sum}), it is enough to prove
\begin{equation}\label{check:sum:eq1}
\lim_{h\rightarrow 0}\frac{d(\Phi_h^{s+h}\circ\Psi_h^s(b), \tilde{\Phi}_h^{s+h}\circ\tilde{\Psi}_h^s(b))}{h} =0
\end{equation}
for all $b\in X$ and $s\geq 0.$

Let $\sigma_\Psi:[s,\infty) \rightarrow X$ be the solution curve of arc field $\Psi$ with initial position $b$ at time $s.$ From Condition A,
we have
\begin{align}\nonumber\label{check:sum:eq2}
\lim_{h\rightarrow 0}\frac{d(\Phi_h^{s+h}(\Psi_h^s(b)),\Phi_h^{s+h}(\sigma_{\Psi}(s+h)))}{h}
&\leq \lim_{h\rightarrow 0}\frac{d(\Psi_h^s(b),\sigma_{\Psi}(s+h))(1+hK_A)}{h}\\
& =0
\end{align}
Similarly, we have
\begin{align}\nonumber\label{check:sum:eq3}
\lim_{h\rightarrow 0}\frac{d(\tilde{\Phi}_h^{s+h}(\tilde{\Psi}_h^s(b)),\tilde{\Phi}_h^{s+h}(\sigma_{\tilde{\Psi}}(s+h)))}{h}
&\leq \lim_{h\rightarrow 0}\frac{d(\tilde{\Psi}_h^s(b),\sigma_{\tilde{\Psi}}(s+h))(1+hK_A)}{h}\\
& =0
\end{align}
We combine (\ref{check:sum:eq2}), (\ref{check:sum:eq3}) and $\sigma_{\Psi}=\sigma_{\tilde{\Psi}}$, $[\tilde{\Phi}]=[\Phi] $ to get
\begin{align*}\nonumber
\lim_{h\rightarrow 0}\frac{d(\Phi_h^{s+h}\circ\Psi_h(b),\tilde{\Phi}_h^{s+h}\circ\tilde{\Psi}_h^s(b))}{h}
&\leq \lim_{h\rightarrow 0}\frac{d(\Phi_h^{s+h}(\sigma_{\Psi}(s+h)),\tilde{\Phi}_h^{s+h}(\sigma_{{\Psi}}(s+h)))}{h}\\
& =0
\end{align*}
which gives (\ref{check:sum:eq1}) and concludes proof.
\end{proof}
 Now, let us think about three flows $\sigma_\Phi, \sigma_\Psi$ and $ \sigma_\Theta.$ Suppose $\Phi$ and $\Psi$ satisfy Condition D, then
 the flow $\sigma_\Phi + \sigma_\Psi(=\sigma_{\Phi+\Psi}) $ is well defined. Furthermore, if $\Phi + \Psi$ and $\Theta$ satisfy Condition D, then
 $$\sigma_{(\Phi+\Psi)+\Theta}:=(\sigma_\Phi + \sigma_\Psi ) + \sigma_\Theta={\sigma}_{\Phi+\Psi} + \sigma_\Theta$$
 is also well defined. It is not hard to see that
 \begin{equation}\label{eq1:Sum:three}
 \lim_{h\rightarrow 0}\frac{d(\sigma_{(\Phi+\Psi)+\Theta}(s+3h),\Theta_{3h}^{s+2h} \circ \Psi_{3h}^{s+h}\circ\Phi_{3h}^{s}(\sigma(s)))}{3h} =0
 \end{equation}
 Similarly, if $\Psi$ and $\Theta$ satisfy Condition D, and $\Phi$ and $\Psi+\Theta$ satisfy Condition D then
$$\sigma_{\Phi+(\Psi+\Theta)}:=\sigma_\Phi + (\sigma_\Psi  + \sigma_\Theta)={\sigma}_{\Phi} + \sigma_{\Psi+\Theta}$$
is well defined. We also have
\begin{equation}\label{eq2:Sum:three}
\lim_{h\rightarrow 0}\frac{d(\sigma_{\Phi+(\Psi+\Theta)}(s+3h),\Theta_{3h}^{s+2h} \circ \Psi_{3h}^{s+h}\circ\Phi_{3h}^{s}(\sigma(s)))}{3h} =0
\end{equation}
By combining (\ref{eq1:Sum:three}) and (\ref{eq2:Sum:three}), we have the following associative law in the sum of flows.
\begin{cor}
Let $\Phi,\Psi, \Theta \in \mathcal{A},$ if they satisfy properties related to Condition D to make sense of sum of three flows, then we have
$$\sigma_\Phi + (\sigma_\Psi+\sigma_\Theta) = (\sigma_\Phi+\sigma_\Psi) + \sigma_\Theta $$
\end{cor}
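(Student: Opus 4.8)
The plan is to derive the identity from the uniqueness of solution curves for the triple composition, exactly as Theorem~\ref{Theorem:uniqueness:sum} follows from the one-step estimates of Section~3. Fix $a\in X$ and $t\in[0,\infty)$ and put $\sigma:=\sigma_{(\Phi+\Psi)+\Theta}$ and $\tau:=\sigma_{\Phi+(\Psi+\Theta)}$, both with initial position $a$ at time $t$. Equations (\ref{eq1:Sum:three}) and (\ref{eq2:Sum:three}) assert that $\sigma$ and $\tau$ obey one and the same relation
$$\lim_{h\rightarrow 0^+}\frac{d\bigl(\gamma(s+3h),\,\Theta_{3h}^{s+2h}\circ\Psi_{3h}^{s+h}\circ\Phi_{3h}^{s}(\gamma(s))\bigr)}{3h}=0,\qquad s\in[t,t+c),$$
with the common initial value $\gamma(t)=a$. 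So it suffices to prove that any two curves satisfying this relation with the same initial value coincide; granting this, $\sigma\equiv\tau$, and since $a$ and $t$ are arbitrary the flows $(\sigma_\Phi+\sigma_\Psi)+\sigma_\Theta=\sigma_{(\Phi+\Psi)+\Theta}$ and $\sigma_\Phi+(\sigma_\Psi+\sigma_\Theta)=\sigma_{\Phi+(\Psi+\Theta)}$ are equal, which is the asserted associativity.

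First I would record the one-step estimate for the triple map. Writing $P_h^s:=\Theta_{3h}^{s+2h}\circ\Psi_{3h}^{s+h}\circ\Phi_{3h}^{s}$ and applying Condition A once for each of $\Phi$, $\Psi$, $\Theta$ (each with arc length $3h$), one gets, on a ball $B(a,r)$ on which the three relevant $\Lambda$-functions are bounded by a common constant $K_A$,
$$d\bigl(P_h^s(x),P_h^s(y)\bigr)\leq d(x,y)\,(1+3hK_A)^3$$
for $x,y\in B(a,r)$ and small $h$. Since both $\sigma$ and $\tau$ stay in such a ball on $[t,t+c)$ (as in the proof of Theorem~\ref{Theorem:existence:sum}), inserting $P_h^s$ by the triangle inequality and invoking the defining relation for both curves yields, for each fixed $s$,
$$d\bigl(\sigma(s+3h),\tau(s+3h)\bigr)\leq d\bigl(\sigma(s),\tau(s)\bigr)(1+3hK_A)^3+o(h)\qquad(h\rightarrow 0^+).$$

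Next I would run the Gronwall argument of Theorem~\ref{Theorem-uniqueness1} applied to $g(s):=e^{-3K_A(s-t)}d(\sigma(s),\tau(s))$, which is Lipschitz since $\sigma$ and $\tau$ are uniform limits of equi-Lipschitz discretized curves. The one-step estimate gives $g(s+3h)-g(s)\leq g(s)\bigl(e^{-9K_Ah}(1+3hK_A)^3-1\bigr)+o(h)=o(h)\bigl(g(s)+1\bigr)$, hence $D^+g(s)=\limsup_{h\rightarrow 0^+}\bigl(g(s+h)-g(s)\bigr)/h\leq 0$ for all $s\in[t,t+c)$. Therefore $g$ is nonincreasing; as $g(t)=0$, we get $g\equiv0$, i.e.\ $\sigma\equiv\tau$ on the interval where both are defined.

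I expect the one genuinely delicate point to be the step $D^+g\leq 0$ from the merely pointwise limits (\ref{eq1:Sum:three})--(\ref{eq2:Sum:three}): at each fixed $s$ the $o(h)$ term is legitimate precisely because \emph{both} $\sigma$ and $\tau$ satisfy the defining relation there, and continuity of $g$ is then what converts the pointwise inequality $D^+g\leq0$ into monotonicity of $g$. Everything else repeats verbatim the arguments behind Theorems~\ref{Theorem-uniqueness1} and~\ref{Theorem:uniqueness:sum}, now with the triple composition $\Theta_{3h}^{s+2h}\circ\Psi_{3h}^{s+h}\circ\Phi_{3h}^{s}$ in place of $\Phi_h^s$ and of $\Psi_h^{s+h}\circ\Phi_h^s$.
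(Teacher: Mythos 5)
Your proposal is correct and follows exactly the route the paper intends: both $\sigma_{(\Phi+\Psi)+\Theta}$ and $\sigma_{\Phi+(\Psi+\Theta)}$ satisfy the same limiting relation with the triple composition $\Theta_{3h}^{s+2h}\circ\Psi_{3h}^{s+h}\circ\Phi_{3h}^{s}$, and equality follows by uniqueness. The paper compresses this to the single sentence ``by combining (\ref{eq1:Sum:three}) and (\ref{eq2:Sum:three})'', whereas you correctly supply the implicit uniqueness/Gronwall argument modeled on Theorem \ref{Theorem-uniqueness1}.
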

%



Courant Institute, New York University, 251 Mercer street, New York, NY 10012, USA.

{\it E-mail address: hwakil@cims.nyu.edu}
\\

Courant Institute, New York University, 251 Mercer street, New York, NY 10012, USA.

{\it E-mail address: masmoudi@cims.nyu.edu}

\end{document}